\newtheorem{thm}{Theorem}[section]
\newtheorem{cor}[thm]{Corollary}
\newtheorem{lem}[thm]{Lemma}
\newtheorem{cex}[thm]{Counterexample}
\theoremstyle{definition}
\newtheorem{defn}[thm]{Definition}
\theoremstyle{remark}
\newtheorem{rem}[thm]{Remark}
\numberwithin{equation}{section}
\begin{document}
\title[]{On the explicit representation of the trace space $H^{\frac{3}{2}}$ and of the solutions to biharmonic Dirichlet problems on Lipschitz domains via multi-parameter Steklov problems}

\author{Pier Domenico Lamberti}%
\address{Universit\`a degli Studi di Padova, Dipartimento di Matematica ``Tullio Levi-Civita'', Via Trieste 63, 35121 Padova, Italy}%
\email{lamberti@math.unipd.it}%
\author{Luigi Provenzano}%
\address{Universit\`a degli Studi di Padova, Dipartimento di Matematica ``Tullio Levi-Civita'', Via Trieste 63, 35121 Padova, Italy}%
\email{luigi.provenzano@math.unipd.it}%
\thanks{The authors are very thankful to Professors Giles Auchmuty and Victor I. Burenkov for useful discussions and references. This work was partially supported by the research project ``Progetto GNAMPA 2019 - Analisi spettrale per operatori ellittici con condizioni di Steklov o parzialmente incernierate''. The authors are members of the Gruppo Nazionale per l'Analisi Matematica, la Probabilit\`a e le loro Applicazioni (GNAMPA) of the Istituto Nazionale di Alta Matematica (INdAM)}%
\subjclass[2010]{35J40,35P10,46E35}%
\keywords{Bi-Laplacian, Steklov boundary conditions, multi-parameter eigenvalue problems, biharmonic Steklov eigenvalues, Fourier series, trace spaces}
\date{\today}%
\begin{abstract}
We consider the problem of describing the traces of functions in $H^2(\Omega)$ on the boundary of a Lipschitz domain $\Omega$ of $\mathbb R^N$, $N\geq 2$. We provide a definition of those spaces, in particular of $H^{\frac{3}{2}}(\partial\Omega)$, by means of Fourier series associated with the eigenfunctions of new multi-parameter biharmonic Steklov problems which we introduce with this specific purpose. These definitions coincide with the classical ones when the domain is smooth. Our spaces allow to represent in series the solutions to the biharmonic Dirichlet problem. Moreover, a few spectral properties of the multi-parameter biharmonic Steklov problems are considered, as well as explicit examples. Our approach is similar to that developed by G. Auchmuty for the space $H^1(\Omega)$, based on the classical second order Steklov problem.
\end{abstract}
\maketitle




\section{Introduction}
We consider the trace spaces of functions in $H^2(\Omega)$ when $\Omega$ is a bounded Lipschitz domain in $\mathbb R^N$, briefly $\Omega$ is of class $C^{0,1}$, for $N\geq 2$. It is well known that there exists a linear and continuous operator $\Gamma$ called the {\it total trace}, from $H^2(\Omega)$ to $L^2(\partial\Omega)\times L^2(\partial\Omega)$ defined by $\Gamma(u)=\left(\gamma_0(u),\gamma_1(u)\right)$, where $\gamma_0(u)$ is the {\it trace} of $u$ on $\partial\Omega$ and $\gamma_1(u)$ is the {\it normal derivative} of $u$. In particular, for $u\in C^{2}(\overline\Omega)$, $\gamma_0(u)=u_{|_{\partial\Omega}}$ and $\gamma_1(u)=\frac{\partial u}{\partial\nu}=\nabla u_{|_{\partial\Omega}}\cdot\nu$, where $\nu$ denotes the outer unit normal to $\partial\Omega$.

A relevant problem in the theory of Sobolev Spaces consists in describing the  {\it trace spaces} $\gamma_0(H^2(\Omega))$, $\gamma_1(H^2(\Omega))$, and the {\it total trace space} $\Gamma(H^2(\Omega))$. This problem has important implications in the study of solutions to fourth order elliptic partial differential equations. 

From a historical point of view, this issue finds its origins in \cite{hadamard} where 
J. Hadamard proposed his famous counterexample pointing out the importance  to understand  which conditions on the datum $g$ guarantee that the solution $v$ to the Dirichlet problem 
$$
\left\{ 
\begin{array}{ll}\Delta v=0,& {\rm in}\ \Omega,\\
v=g,& {\rm on }\ \partial \Omega,
\end{array}
\right.
$$ 
has square summable gradient.  
 In modern terms,  this  problem  can be reformulated as the problem of finding necessary and sufficient conditions on $g$ such that $g=\gamma_0(u)$ for some $u\in H^1(\Omega)$.

 If the domain $\Omega$ is of class $C^{2,1}$, then it is known that $\gamma_0(H^2(\Omega))=H^{\frac{3}{2}}(\partial\Omega)$, $\gamma_1(H^2(\Omega))=H^{\frac{1}{2}}(\partial\Omega)$, and $\Gamma(H^2(\Omega))=H^{\frac{3}{2}}(\partial\Omega)\times H^{\frac{1}{2}}(\partial\Omega)$, where $H^{\frac{3}{2}}(\partial\Omega)$ and $H^{\frac{1}{2}}(\partial\Omega)$ are the classical Sobolev spaces of fractional order (see e.g., \cite{grisvard,necas} for their definitions). However, if $\Omega$ is an arbitrary bounded domain of class $C^{0,1}$ there is no such a simple description and not many results are available in the literature.

We note that a complete description of the traces of all derivatives up to the order $m-1$ of a function $u\in H^m(\Omega)$ is due to O. Besov who provided an explicit but quite technical representation theorem, see \cite{besov19721,besov19722}, see also \cite{besov}.  Simpler descriptions are not available with the exception of a few special cases. For example, when $\Omega$ is a polygon in $\mathbb R^2$ the trace spaces are described by using the classical trace theorem applied to each side of the polygon, complemented with suitable compatibility conditions at the vertexes, see \cite{grisvard} also for higher dimensional polyhedra. For more general planar domains another simple description is given in \cite{geymonat}.

Our list  of references cannot be exhaustive and we refer to the recent monograph  \cite{mazya_polyhedral} which  treats the trace problem in presence of corner or conical singularities in $\mathbb R^3$, as well as further results on $N$-dimensional polyhedra. We also quote  the fundamental paper ~\cite{kon} by  V.~Kondrat'ev for a pioneering work in this type of problems.

Thus, the definition of the space $H^{\frac{3}{2}}(\partial\Omega)$ turns out to be problematic and for this reason sometimes the space $H^{\frac{3}{2}}(\partial\Omega)$ is simply defined by setting $H^{\frac{3}{2}}(\partial\Omega):=\gamma_0(H^2(\Omega))$ without providing an explicit representation. Note that standard definitions of $H^s(\partial\Omega)$ when $s\in(1,2]$ require that $\Omega$ is of class at least $C^{2}$.

In the present paper we provide decompositions of the space $H^2(\Omega)$ of the form $H^2(\Omega)=H^2_{\mu,D}(\Omega)+\mathcal{H}^2_{0,N}(\Omega)$ and $H^2(\Omega)=H^2_{\lambda,N}(\Omega)+\mathcal{H}^2_{0,D}(\Omega)$. The spaces $\mathcal{H}^2_{0,N}(\Omega)$ and $\mathcal{H}^2_{0,D}(\Omega)$ are the subspaces of $H^2(\Omega)$ of those functions $u$ such that $\gamma_1(u)=0$ and $\gamma_0(u)=0$, respectively. The spaces $H^2_{\mu,D}(\Omega)$ and $H^2_{\lambda,N}(\Omega)$ are associated with suitable Steklov problems of biharmonic type (namely, problems \eqref{BSM} and \eqref{BSL} described here below), depending on real parameters $\mu,\lambda$, and admit Fourier bases of Steklov eigenfunctions, see \eqref{HM} and \eqref{HL}. Under the sole assumptions that $\Omega$ is of class $C^{0,1}$, we use those bases to define in a natural way two spaces at the boundary which we denote by $\mathcal S^{\frac{3}{2}}(\partial\Omega)$ and $\mathcal S^{\frac{1}{2}}(\partial\Omega)$ and we prove that
$$
\gamma_0(H^2(\Omega))=\gamma_0(H^2_{\lambda,N}(\Omega))=\mathcal S^{\frac{3}{2}}(\partial\Omega)
$$
and
$$
\gamma_1(H^2(\Omega))=\gamma_1(H^2_{\mu,D}(\Omega))=\mathcal S^{\frac{1}{2}}(\partial\Omega),
$$
see Theorem \ref{traces}. Thus, if one would like to define the space $H^\frac{3}{2}(\partial\Omega)$ as $\gamma_0(H^2(\Omega))$, our result gives an explicit description of  $H^\frac{3}{2}(\partial\Omega)$.

It turns out that the analysis of problems \eqref{BSM}-\eqref{BSL} provides further information on the total trace $\Gamma(H^2(\Omega))$. In particular, we prove the inclusion $\Gamma(H^2(\Omega))\subseteq\mathcal S^{\frac{3}{2}}(\partial\Omega)\times\mathcal S^{\frac{1}{2}}(\partial\Omega)$ and show that in general this inclusion is strict if $\Omega$ is assumed to be only of class $C^{0,1}$. Moreover, we show that any couple $(f,g)\in \mathcal S^{\frac{3}{2}}(\partial\Omega)\times\mathcal S^{\frac{1}{2}}(\partial\Omega)$ belongs to $\Gamma(H^2(\Omega))$ if and only if it satisfies a certain compatibility condition, see Theorem \ref{total_trace_2}. 

If $\Omega$ is of class $C^{2,1}$, we recover the classical result, namely $\Gamma(H^2(\Omega))=\mathcal S^{\frac{3}{2}}(\partial\Omega)\times\mathcal S^{\frac{1}{2}}(\partial\Omega)$, which implies that $\mathcal S^{\frac{3}{2}}(\partial\Omega)=H^{\frac{3}{2}}(\partial\Omega)$ and $\mathcal S^{\frac{1}{2}}(\partial\Omega)=H^{\frac{1}{2}}(\partial\Omega)$.

The two families of problems which we are going to introduce depend on a parameter $\sigma\in\big(-\frac{1}{N-1},1\big)$, which in applications to linear elasticity represents the Poisson coefficient of the elastic material of the underlying system for $N=2$.

The first family of ${\rm BS}_{\mu}$ - `Biharmonic Steklov $\mu$' problems is defined as follows:
\begin{equation}\tag{${\rm BS}_{\mu}$}\label{BSM}
\begin{cases}
\Delta^2v=0, & {\rm in\ }\Omega,\\
(1-\sigma)\frac{\partial^2 v}{\partial\nu^2}+\sigma\Delta v=\lambda(\mu)\frac{\partial v}{\partial\nu}, & {\rm on\ }\partial\Omega,\\
-(1-\sigma){\rm div}_{\partial\Omega}(D^2v\cdot\nu)_{\partial\Omega}-\frac{\partial \Delta v}{\partial\nu}=\mu v, & {\rm on\ }\partial\Omega,
\end{cases}
\end{equation}
in the unknowns $v,\lambda(\mu)$, where $\mu\in\mathbb R$ is fixed. Here $D^2 u$ denotes the Hessian matrix of $u$, ${\rm div}_{\partial\Omega}F:={\rm div}F-(\nabla F\cdot \nu)\nu$ denotes the tangential divergence of a vector field $F$ and $F_{\partial\Omega}:=F-(F\cdot\nu)\nu$ denotes the tangential component of $F$.

The second family of ${\rm BS}_{\lambda}$ - `Biharmonic Steklov $\lambda$' problems is defined as follows:
\begin{equation}\tag{${\rm BS}_{\lambda}$}\label{BSL}
\begin{cases}
\Delta^2u=0, & {\rm in\ }\Omega,\\
(1-\sigma)\frac{\partial^2 u}{\partial\nu^2}+\sigma\Delta u=\lambda\frac{\partial u}{\partial\nu}, & {\rm on\ }\partial\Omega,\\
-(1-\sigma){\rm div}_{\partial\Omega}(D^2u\cdot\nu)_{\partial\Omega}-\frac{\partial \Delta u}{\partial\nu}=\mu(\lambda) u, & {\rm on\ }\partial\Omega,
\end{cases}
\end{equation}
in the unknowns $u,\mu(\lambda)$, where $\lambda\in\mathbb R$ is fixed. 

Note that since $\Omega$ is assumed to be of class $C^{0,1}$, problems \eqref{BSM} and \eqref{BSL} have to be considered in the weak sense, see \eqref{weak_BSM} and \eqref{weak_BSL} for the appropriate formulations.

Up to our knowledge, the Steklov problems \eqref{BSM} and \eqref{BSL} are new in the literature. Other Steklov-type problems for the biharmonic operator have been discussed in the literature. We mention the DBS - `Dirichlet Biharmonic Steklov' problem
\begin{equation}\tag{${\rm DBS}$}\label{DBS}
\begin{cases}
\Delta^2v=0, & {\rm in\ }\Omega,\\
(1-\sigma)\frac{\partial^2 v}{\partial\nu^2}+\sigma\Delta v=\eta\frac{\partial v}{\partial\nu}, & {\rm on\ }\partial\Omega,\\
v=0, & {\rm on\ }\partial\Omega,
\end{cases}
\end{equation}
in the unknowns $v,\eta$, and the NBS - `Neumann Biharmonic Steklov' problem
\begin{equation}\tag{${\rm NBS}$}\label{NBS}
\begin{cases}
\Delta^2u=0, & {\rm in\ }\Omega,\\
\frac{\partial u}{\partial\nu}=0, & {\rm on\ }\partial\Omega,\\
-(1-\sigma){\rm div}_{\partial\Omega}(D^2u\cdot\nu)_{\partial\Omega}-\frac{\partial \Delta u}{\partial\nu}=\xi u, & {\rm on\ }\partial\Omega,
\end{cases}
\end{equation}
in the unknowns $u,\xi$. Problem \eqref{DBS} for $\sigma=1$ has been studied by many authors (see e.g., \cite{auchmuty_svg,bucurgazzola,ferrerogazzola,ferrero_lamberti,fichera,kuttler_sigillito,liu_1}); for the case $\sigma\ne 1$ we refer to \cite{buoso_proc}, see also \cite{auch,ferrero_lamberti} for $\sigma=0$. Problem \eqref{NBS} has been discussed in \cite{kuttler_sigillito,liu_2,liu_1} for $\sigma=1$. We point out that problem \eqref{BSL} with $\sigma=\lambda=0$ has been introduced in \cite{buosoprovenzano} as the natural fourth order generalization of the classical Steklov problem for the Laplacian (see also \cite{bcp}). As we shall see, problem \eqref{BSL} shares much more analogies with the classical Steklov problem than those already presented in \cite{buosoprovenzano}, in particular it plays a role in describing the space $\gamma_0(H^2(\Omega))$ similar to that played by the Steklov problem for the Laplacian in describing $\gamma_0(H^1(\Omega))$ (cf. \cite{auchmuty_steklov}).

If $\mu<0$, problem \eqref{BSM} has a discrete spectrum which consists of a divergent sequence  $\left\{\lambda_j(\mu)\right\}_{j=1}^{\infty}$ of non-negative eigenvalues of finite multiplicity. Similarly, if $\lambda<\eta_1$, where $\eta_1>0$ is the first eigenvalue of \eqref{DBS}, problem \eqref{BSL} has a discrete spectrum which consists of a divergent sequence  $\left\{\mu_j(\lambda)\right\}_{j=1}^{\infty}$ of eigenvalues of finite multiplicity and  bounded from below. (For other values of $\mu$ and $\lambda$ the description of the spectra of \eqref{BSM} and \eqref{BSL} is more involved, see Appendix \ref{appendixA}.)

The eigenfunctions associated with the eigenvalues $\lambda_j(\mu)$ define a Hilbert basis of the above mentioned space $H^2_{\mu,D}(\Omega)$ which is the orthogonal complement in $H^2(\Omega)$ of $\mathcal H^2_{0,N}(\Omega)$ with respect to a suitable scalar product. Moreover, the normal derivatives of those eigenfunctions allow to define the above mentioned space $\mathcal S^{\frac{1}{2}}(\partial\Omega)$, see \eqref{S_mu}. Similarly, the eigenfunctions associated with the eigenvalues $\mu_j(\lambda)$ define a Hilbert basis of the space $H^2_{\lambda,N}(\Omega)$  which is the orthogonal complement in $H^2(\Omega)$ of $\mathcal H^2_{0,D}(\Omega)$ with respect to a suitable scalar product. Moreover, the traces of those eigenfunctions  allow to define the space $\mathcal S^{\frac{3}{2}}(\partial\Omega)$, see \eqref{S_lambda}.

The definitions in \eqref{S_lambda} and \eqref{S_mu} are given by means of Fourier series and the coefficients in such expansions need to satisfy certain summability conditions, which are strictly related to the asymptotic behavior of the eigenvalues of \eqref{BSL} and \eqref{BSM}. Note that 
\begin{equation}\label{weyl}
\mu_j(\lambda)\sim C_N\left(\frac{j}{|\partial\Omega|}\right)^\frac{3}{N-1}{\rm\ \ \ and\ \ \ }\lambda_j(\mu)\sim C_N'\left(\frac{j}{|\partial\Omega|}\right)^\frac{1}{N-1}\,,{\rm\ as\ }j\rightarrow+\infty,
\end{equation}
where $C_N,C_N'$ depend only on $N$, see Appendix \ref{asymptotic}. In view of  \eqref{weyl} and \eqref{S_lambda}-\eqref{S_mu}, we can identify the space $\mathcal S^{\frac{3}{2}}(\partial\Omega)$ with the space of sequences
\begin{equation}\label{weyl1}
\left\{(s_j)_{j=1}^{\infty}\in\mathbb R^{\infty}:(j^{\frac{3}{2(N-1)}}s_j)_{j=1}^{\infty}\in l^2\right\}
\end{equation}
and the space $\mathcal S^{\frac{1}{2}}(\partial\Omega)$ with the space
\begin{equation}\label{weyl2}
\left\{(s_j)_{j=1}^{\infty}\in\mathbb R^{\infty}:(j^{\frac{1}{2(N-1)}}s_j)_{j=1}^{\infty}\in l^2\right\}.
\end{equation}
        Observe the natural  appearance of the exponents $\frac{3}{2}$ and $\frac{1}{2}$ in \eqref{weyl1} and \eqref{weyl2}. It is remarkable that, in essence, a summability condition analogous  to that in \eqref{weyl2} is already present in  \cite[Formula~(3)]{hadamard}  for the case of the unit disk $D$ of the plane and the space $H^{\frac{1}{2}}(\partial D)=\gamma_0(H^1(D))$.

Using the representations \eqref{S_lambda} and \eqref{S_mu} we are able to provide necessary and sufficient conditions for the solvability in $H^2(\Omega)$ of the Dirichlet problem
\begin{equation}\label{Dirichlet_problem}
\begin{cases}
\Delta^2 u=0\,, & {\rm in\ }\Omega,\\
u=f\,, & {\rm on\ }\partial\Omega,\\
\frac{\partial u}{\partial\nu}=g\,, & {\rm on\ }\partial\Omega,
\end{cases}
\end{equation}
under the sole assumption that $\Omega$ is of class $C^{0,1}$, and to represent in Fourier series the solutions. We note that different necessary and sufficient conditions for the solvability of problem \eqref{Dirichlet_problem} in the space $H(\Delta,\Omega)=\left\{u\in H^1(\Omega):\Delta u\in L^2(\Omega)\right\}$ have been found in \cite{auch} by using the \eqref{DBS} problem with $\sigma=1$ and the classical Dirichlet-to-Neumann map. We refer to \cite{barton,verchota_dir,verchota} for a different approach to the solvability of higher order problems on Lipschitz domains.



Since we have not been able to find problems \eqref{BSM} and \eqref{BSL} in the literature, we believe that it is worth including in the present paper also some information on their spectral behavior, which may have a certain interest on its own. In particular, we prove Lipschitz continuity results for the functions $\mu\mapsto\lambda_j(\mu)$ and $\lambda\mapsto\mu_j(\lambda)$ and we show that problems \eqref{DBS} and \eqref{NBS} can be seen as limiting problems for \eqref{BSM} and \eqref{BSL} as $\mu\rightarrow-\infty$ and $\lambda\rightarrow-\infty$, respectively. We also perform a complete study of the eigenvalues in the unit ball in $\mathbb R^N$ for $\sigma=0$, and we discuss the asymptotic behavior of $\lambda_j(\mu)$ and $\mu_j(\lambda)$ on smooth domains when $j\rightarrow+\infty$. Finally, we briefly discuss problems \eqref{BSM} and \eqref{BSL} also when $\mu> 0$ and $\lambda > \eta_1$. 

Our approach is similar to that developed by G. Auchmuty in \cite{auchmuty_steklov} for the trace space of $H^1(\Omega)$, based on the classical second order Steklov problem
$$
\begin{cases}
\Delta u=0\,, & {\rm in\ }\Omega,\\
\frac{\partial u}{\partial\nu}=\lambda u\,, & {\rm on\ }\partial\Omega.
\end{cases}
$$
We also refer to \cite{torres} for related results.

This paper is organized as follows. In Section \ref{pre} we introduce some notation and discuss a few preliminary results. In Section \ref{problems} we discuss problems \eqref{BSM} and \eqref{BSL} when $\mu<0$ and $\lambda<\eta_1$. In Section \ref{traceproblem} we define the spaces $\mathcal S^{\frac{3}{2}}(\partial\Omega)$ and $\mathcal S^{\frac{1}{2}}(\partial\Omega)$ and the representation theorems for the trace spaces of $H^2(\Omega)$. In Subsection \ref{dir_prob_2} we prove a representation result for the solutions of the biharmonic Dirichlet problem.  In Appendix \ref{ball} we provide a complete description of problems \eqref{BSM} and \eqref{BSL} on the unit ball for $\sigma=0$. In Appendix \ref{asymptotic} we briefly discuss asymptotic laws for the eigenvalues. In Appendix \ref{appendixA} we discuss problems \eqref{BSM} and \eqref{BSL} when $\mu> 0$ and $\lambda>\eta_1$.





\section{Preliminaries and notation}\label{pre}
For a bounded domain (i.e., a bounded open connected set) $\Omega$ in $\mathbb R^N$, we denote by $H^1(\Omega)$ the standard Sobolev space of functions in $L^2(\Omega)$ with all weak derivatives of the first order in $L^2(\Omega)$ endowed with its standard norm $\|u\|_{H^1(\Omega)}:=\left(\|\nabla u\|^2_{L^2(\Omega)}+\|u\|^2_{L^2(\Omega)}\right)^{\frac{1}{2}}$ for all $u\in H^1(\Omega)$. Note that in this paper we consider $L^2(\Omega)$ as a space of real-valued functions and we always assume $N\geq 2$.

 By $H^2(\Omega)$ we denote the standard Sobolev space of functions in $L^2(\Omega)$ with all weak derivatives of the first and second order in $L^2(\Omega)$ endowed with the norm $\|u\|_{H^2(\Omega)}:=\left(\|D^2u\|^2_{L^2(\Omega)}+\|u\|^2_{L^2(\Omega)}\right)^{\frac{1}{2}}$ for all $u\in H^2(\Omega)$. We denote by $H_0^1(\Omega)$ the closure of $C^{\infty}_c(\Omega)$ in $H^1(\Omega)$ and by $H^2_0(\Omega)$ the closure of $C^{\infty}_c(\Omega)$ in $H^2(\Omega)$. The space $C^{\infty}_c(\Omega)$ is the space of all functions in $C^{\infty}(\Omega)$ with compact support in $\Omega$. If the boundary is sufficiently regular (e.g., if $\Omega$ is of class $C^{0,1}$), the norm defined by $\sum_{|\alpha|\leq 2}\|D^{\alpha}u\|_{L^2(\Omega)}$ is a norm on $H^2(\Omega)$ equivalent to the standard one.

By definition, a domain of class $C^{0,1}$ is such that locally around each point of its boundary it can be described as the sub-graph of a Lipschitz continuous function. Also, we shall say that $\Omega$ is of class $C^{2,1}$ if locally around each point of its boundary the domain can be described as the sub-graph of a function of class $C^{2,1}$.

By $(\cdot,\cdot)_{\partial\Omega}$ we denote the standard scalar product of $L^2(\partial\Omega)$, namely
$$
(u,v)_{\partial\Omega}:=\int_{\partial\Omega}uv d\sigma\,,\ \ \ \forall u,v\in L^2(\partial\Omega).
$$
We denote by $\gamma_0(u)\in L^2(\partial\Omega)$ the trace of $u$ and by $\gamma_1(u)\in L^2(\partial\Omega)$ the normal derivative of $u$, that is, $\gamma_1(u)=\nabla u\cdot\nu$. By $\Gamma$ we denote the total trace operator from $H^2(\Omega)$ to $L^2(\partial\Omega)\times L^2(\partial\Omega)$ defined by
$$
\Gamma_2(u)=(\gamma_0(u),\gamma_1(u))\,,
$$
for all $u\in H^2(\Omega)$. The operator $\Gamma$ is compact. If $\Omega$ is of class $C^{2,1}$ then $\Gamma$ is a linear and continuous operator from $H^2(\Omega)$ onto $H^{\frac{3}{2}}(\partial\Omega)\times H^{\frac{1}{2}}(\partial\Omega)$ admitting a right continuous inverse. We refer to e.g., \cite{necas} for more details.

Here $H^{\frac{3}{2}}(\partial\Omega), H^{\frac{1}{2}}(\partial\Omega)$ denote the standard Sobolev spaces of fractional order  (see e.g., \cite{grisvard,necas} for more details).
For any $\sigma\in\big(-\frac{1}{N-1},1\big)$, $\mu,\lambda\in\mathbb R$ and $u,v\in H^2(\Omega)$ we set
\begin{equation*}
\mathcal Q_{\sigma}(u,v)=(1-\sigma)\int_{\Omega}D^2u:D^2v dx+\sigma\int_{\Omega}\Delta u\Delta v dx,
\end{equation*}
\begin{equation*}
\mathcal{Q}_{\mu,D}(u,v)=\mathcal Q_{\sigma}(u,v)-\mu(\gamma_0(u),\gamma_0(v))_{\partial\Omega},
\end{equation*}
and
\begin{equation*}
\mathcal{Q}_{\lambda,N}(u,v)=\mathcal Q_{\sigma}(u,v)-\lambda(\gamma_1(u),\gamma_1(v))_{\partial\Omega},
\end{equation*}
where $D^2u:D^2 v=\sum_{i,j=1}^N\frac{\partial^2u}{\partial x_i\partial x_j}\frac{\partial^2 v}{\partial x_i\partial x_j}$ denotes the Frobenius product of the Hessians matrices. Note that if $\sigma\in\big(-\frac{1}{N-1},1\big)$, then the quadratic form $\mathcal Q_{\sigma}$ is coercive in $H^2(\Omega)$ and the norm $\left(\mathcal Q_{\sigma}(u,u)+\|u\|_{L^2(\Omega)}^2\right)^{\frac{1}{2}}$ is equivalent to the standard norm of $H^2(\Omega)$, see e.g., \cite{chasman2}.

It is easy to see that if $\Omega$ is a bounded domain of class $C^{0,1}$ then the space $H^2(\Omega)$ can be endowed with the equivalent norm
\begin{equation*}
\left(\|D^2u\|^2_{L^2(\Omega)}+\|\gamma_0(u)\|^2_{L^2(\partial\Omega)}\right)^{\frac{1}{2}}.
\end{equation*}

We set
\begin{equation*}
\mathcal{H}^2_{0,D}(\Omega)=\left\{u\in H^2(\Omega):\gamma_0(u)=0\right\}
\end{equation*}
and
\begin{equation*}
\mathcal{H}^2_{0,N}(\Omega)=\left\{u\in H^2(\Omega):\gamma_1(u)=0\right\}.
\end{equation*}
 The spaces $\mathcal{H}_{0,D}^2(\Omega)$ and  $\mathcal{H}_{0,N}^2(\Omega)$ are closed subspaces of $H^2(\Omega)$ and $\mathcal{H}^2_{0,N}(\Omega)\cap \mathcal{H}^2_{0,D}(\Omega)=H^2_0(\Omega)$. We also note that $\mathcal{H}^2_{0,D}=H^2(\Omega)\cap H^1_0(\Omega)$.

It is useful to recall the so-called biharmonic Green formula 
\begin{multline}\label{biharmonc_green}
\int_{\Omega}D^2\psi:D^2\varphi dx=\int_{\Omega}(\Delta^2\psi)\varphi dx+\int_{\partial\Omega}\frac{\partial^2\psi}{\partial\nu^2}\frac{\partial\varphi}{\partial\nu}d\sigma\\
-\int_{\partial\Omega}\left({\rm div}_{\partial\Omega}(D^2\psi\cdot\nu)_{\partial\Omega}+\frac{\partial\Delta \psi}{\partial\nu}\right)\varphi d\sigma,
\end{multline}
valid for all sufficiently smooth $\psi,\varphi$, see \cite{arrietalamberti1}.

The biharmonic functions in $H^2(\Omega)$ are defined as those functions $u\in H^2(\Omega)$ such that $\int_{\Omega}D^2u:D^2\varphi dx=0$ for all $\varphi\in H^2_0(\Omega)$, or equivalently, thanks to \eqref{biharmonc_green}, those functions $u\in H^2(\Omega)$ such that $\int_{\Omega}\Delta u\Delta\varphi dx=0$ for all $\varphi\in H^2_0(\Omega)$ . We denote by $\mathcal B_N(\Omega)$ the space of biharmonic functions with zero normal derivative, that is the orthogonal complement of $\mathcal H^2_{0,N}(\Omega)$ in $H^2_0(\Omega)$ with respect to $\mathcal Q_{\sigma}$:
\begin{equation}\label{BN}
\mathcal B_N(\Omega):=\left\{u\in \mathcal H^2_{0,N}(\Omega):\mathcal Q_{\sigma}(u,\varphi)=0\,,\forall\varphi\in H^2_0(\Omega)\right\}.
\end{equation}
By formula \eqref{biharmonc_green} and a standard approximation we deduce that
\begin{equation}\label{BN2}
\mathcal B_N(\Omega):=\left\{u\in \mathcal H^2_{0,N}(\Omega):\int_{\Omega}\Delta u\Delta\varphi dx=0\,,\forall\varphi\in H^2_0(\Omega)\right\}.
\end{equation}
We note that $\mathcal B_N(\Omega)$ is the space of the biharmonic functions in $\mathcal H^2_{0,N}(\Omega)$. Thus
$$
\mathcal H^2_{0,N}(\Omega)= H^2_0(\Omega)\oplus\mathcal B_N(\Omega).
$$
Analogously, we denote by $\mathcal B_D$ the space of biharmonic functions with zero boundary trace, that is the orthogonal complement of $\mathcal H^2_{0,D}(\Omega)$ in $H^2_0(\Omega)$ with respect to $\mathcal Q_{\sigma}$:
\begin{equation}\label{BD}
\mathcal B_D(\Omega):=\left\{u\in \mathcal H^2_{0,D}(\Omega):\mathcal Q_{\sigma}(u,\varphi)=0\,,\forall\varphi\in H^2_0(\Omega)\right\}.
\end{equation}
By formula \eqref{biharmonc_green} and standard approximation we deduce that
\begin{equation}\label{BD2}
\mathcal B_D(\Omega):=\left\{u\in \mathcal H^2_{0,D}(\Omega):\int_{\Omega}\Delta u\Delta\varphi dx=0\,,\forall\varphi\in H^2_0(\Omega)\right\}.
\end{equation}
We note that $\mathcal B_D(\Omega)$ is the space of biharmonic functions in $\mathcal H^2_{0,D}(\Omega)$. Thus
$$
\mathcal H^2_{0,D}(\Omega)= H^2_0(\Omega)\oplus\mathcal B_D(\Omega).
$$

Finally, by $\mathbb N$ we denote the set of positive natural numbers and by $\mathbb N_0$ the set $\mathbb N\cup\left\{0\right\}$.




\section{Multi-parameter Steklov problems}\label{problems}
In this section we provide the appropriate weak formulations of problems \eqref{BSM} and \eqref{BSL}. In particular we prove that both problems have discrete spectrum provided $\mu<0$ and $\lambda<\eta_1$, respectively. Here $\eta_1$ is the first eigenvalue of problem \eqref{weak_DBS} below, which is the weak formulation of \eqref{DBS}. We remark that $\eta_1>0$ and that $\xi_1=0$ is the first eigenvalue of problem \eqref{NBS}, hence the condition $\mu<0$ reads $\mu<\xi_1$. We also provide  a variational characterization of the eigenvalues.

Through all this section $\Omega$ will be a bounded domain of class $C^{0,1}$ and $\sigma\in\big(-\frac{1}{N-1},1\big)$ will be fixed.
\subsection{The \eqref{DBS} and \eqref{NBS} problems}

Before analyzing problems \eqref{BSM} and \eqref{BSL} we need to recall a few facts about problems \eqref{DBS} and \eqref{NBS}.

Problem \eqref{DBS} is understood in the weak sense as follows:
\begin{equation}\label{weak_DBS}
\int_{\Omega}(1-\sigma)D^2 v:D^2\varphi +\sigma\Delta v\Delta\varphi dx=\eta\int_{\partial\Omega}\frac{\partial v}{\partial\nu}\frac{\partial\varphi}{\partial\nu} d\sigma\,,\ \ \ \forall\varphi\in \mathcal{H}^2_{0,D}(\Omega),
\end{equation}
in the unknowns $v\in \mathcal{H}^2_{0,D}(\Omega)$, $\eta\in\mathbb R$. Note that formulation \eqref{weak_DBS} is justified by formula \eqref{biharmonc_green}. Indeed, by applying formula \eqref{biharmonc_green}, one can easily see that if $v$ is a smooth solution to problem \eqref{weak_DBS}, then $v$ is a solution to the classical problem \eqref{DBS} (the same considerations can be done for all other problems discussed in this paper).

We have the following theorem.
\begin{thm}\label{thm_DBS}
Let $\Omega$ be a bounded domain in $\mathbb R^N$ of class $C^{0,1}$ and let $\sigma\in\big(-\frac{1}{N-1},1\big)$. The eigenvalues of problem \eqref{weak_DBS} have finite multiplicity and are given by a non-decreasing sequence of positive real numbers $\eta_j$ defined by
\begin{equation}\label{minmax_DBS}
\eta_j=\min_{\substack{V\subset \mathcal{H}^2_{0,D}(\Omega)\\{\rm dim}V=j}}\max_{\substack{v\in V\\u\ne 0}}\frac{\mathcal Q_{\sigma}(v,v)}{\int_{\partial\Omega}\left(\frac{\partial v}{\partial\nu}\right)^2d\sigma},
\end{equation}
where each eigenvalue is repeated according to its multiplicity. Moreover, there exists a Hilbert basis $\left\{v_j\right\}_{j=1}^{\infty}$ of $\mathcal B_D(\Omega)$ of eigenfunctions $v_j$ associated with the eigenvalues $\eta_j$. Finally, by normalizing the eigenfunctions $v_j$ with respect to $\mathcal Q_{\sigma}$ for all $j\geq 1$, the functions $\hat v_j:=\sqrt{\eta_j}\gamma_1(v_j)$ define a Hilbert basis of $L^2(\partial\Omega)$ with respect to its standard scalar product.
\end{thm}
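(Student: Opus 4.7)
The plan is to prove the theorem via the spectral theorem for compact self-adjoint operators on the Hilbert space $(\mathcal{H}^2_{0,D}(\Omega), \mathcal{Q}_\sigma)$. First I would observe that on $\mathcal{H}^2_{0,D}(\Omega)$ the form $\mathcal{Q}_\sigma$ is an inner product equivalent to the standard $H^2$-inner product: the coercivity of $\mathcal{Q}_\sigma$ on $H^2(\Omega)$ modulo the $L^2$-term, combined with the Poincar\'e inequality for functions vanishing on $\partial\Omega$, yields $\|u\|_{L^2(\Omega)} \le C\, \mathcal{Q}_\sigma(u,u)^{1/2}$ for $u \in \mathcal{H}^2_{0,D}(\Omega)$. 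Since for each $u$ the map $\varphi \mapsto (\gamma_1(u), \gamma_1(\varphi))_{\partial\Omega}$ is a continuous linear functional on $\mathcal{H}^2_{0,D}(\Omega)$ by continuity of the trace $\gamma_1 : H^2(\Omega) \to L^2(\partial\Omega)$, the Riesz representation theorem produces a linear operator $T: \mathcal{H}^2_{0,D}(\Omega) \to \mathcal{H}^2_{0,D}(\Omega)$ uniquely determined by
$$
\mathcal{Q}_\sigma(Tu, \varphi) = (\gamma_1(u), \gamma_1(\varphi))_{\partial\Omega}, \qquad \forall\, \varphi \in \mathcal{H}^2_{0,D}(\Omega),
$$
and $v \neq 0$ solves \eqref{weak_DBS} with parameter $\eta$ if and only if $Tv = \eta^{-1} v$.

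Next I would verify that $T$ is self-adjoint and non-negative (both immediate from the symmetry and positivity of $(\gamma_1(\cdot), \gamma_1(\cdot))_{\partial\Omega}$) and compact. Compactness rests on compactness of $\gamma_1 : H^2(\Omega) \to L^2(\partial\Omega)$, obtained as the composition $u \mapsto \nabla u \in H^1(\Omega)^N \mapsto \gamma_0(\nabla u) \in H^{1/2}(\partial\Omega)^N \hookrightarrow L^2(\partial\Omega)^N$, followed by pairing with $\nu \in L^\infty(\partial\Omega)$: if $u_n \rightharpoonup u$ then $\gamma_1(u_n) \to \gamma_1(u)$ in $L^2(\partial\Omega)$, so $\mathcal{Q}_\sigma(T(u_n-u), T(u_n-u)) \le \|\gamma_1(u_n-u)\|_{L^2(\partial\Omega)} \|\gamma_1(T(u_n-u))\|_{L^2(\partial\Omega)} \to 0$. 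I would identify $\ker T = \{u \in \mathcal{H}^2_{0,D}(\Omega) : \gamma_1(u) = 0\} = H^2_0(\Omega)$ and recall from the preliminaries that the $\mathcal{Q}_\sigma$-orthogonal complement of $H^2_0(\Omega)$ in $\mathcal{H}^2_{0,D}(\Omega)$ is $\mathcal{B}_D(\Omega)$. The restriction $T|_{\mathcal{B}_D(\Omega)}$ is therefore positive, self-adjoint, compact, and injective, so the spectral theorem gives a sequence $\tau_j \searrow 0$ of positive eigenvalues with $\mathcal{Q}_\sigma$-orthonormal eigenfunctions $v_j$ forming a Hilbert basis of $\mathcal{B}_D(\Omega)$; setting $\eta_j := 1/\tau_j$ produces the claimed divergent sequence of positive eigenvalues of \eqref{weak_DBS}. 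The weak formulation extends from test functions in $\mathcal{B}_D(\Omega)$ to all $\varphi \in \mathcal{H}^2_{0,D}(\Omega)$ because both sides vanish when $\varphi \in H^2_0(\Omega)$. The min-max representation \eqref{minmax_DBS} is then the standard Courant--Fischer formula for $T|_{\mathcal{B}_D(\Omega)}$; enlarging the admissible $V$ to subspaces of $\mathcal{H}^2_{0,D}(\Omega)$ does not affect the value, since for $v = v_0 + v_1$ with $v_0 \in H^2_0(\Omega)$, $v_1 \in \mathcal{B}_D(\Omega)$ one has $\gamma_1(v) = \gamma_1(v_1)$ and $\mathcal{Q}_\sigma(v,v) \ge \mathcal{Q}_\sigma(v_1,v_1)$.

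For the last assertion, I would compute directly: with the $\mathcal{Q}_\sigma$-normalization $\mathcal{Q}_\sigma(v_i, v_j) = \delta_{ij}$, testing the eigenvalue equation for $v_j$ against $v_i$ gives $(\gamma_1(v_i), \gamma_1(v_j))_{\partial\Omega} = \eta_j^{-1} \delta_{ij}$, so $(\hat v_i, \hat v_j)_{\partial\Omega} = \delta_{ij}$. For completeness, suppose $g \in L^2(\partial\Omega)$ is orthogonal to every $\hat v_j$; by Riesz there is a unique $w \in \mathcal{H}^2_{0,D}(\Omega)$ with $\mathcal{Q}_\sigma(w, \varphi) = (g, \gamma_1(\varphi))_{\partial\Omega}$ for all $\varphi$. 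Testing with $\varphi = v_j$ shows $w$ is $\mathcal{Q}_\sigma$-orthogonal to each $v_j$, hence $w \in H^2_0(\Omega)$; testing then with $\varphi = w$ yields $\mathcal{Q}_\sigma(w, w) = (g, \gamma_1(w))_{\partial\Omega} = 0$, so $w = 0$, and consequently $(g, \gamma_1(\varphi))_{\partial\Omega} = 0$ for every $\varphi \in \mathcal{H}^2_{0,D}(\Omega)$. The main obstacle is the last step, namely to deduce $g = 0$: this reduces to showing that $\gamma_1(\mathcal{H}^2_{0,D}(\Omega))$ is dense in $L^2(\partial\Omega)$. On a Lipschitz domain I would establish this by a local construction in boundary charts, realizing any smooth compactly supported function on $\partial\Omega$ as the normal derivative of the product of a smooth ambient function with a suitable regularization of the signed distance to $\partial\Omega$, which lies in $H^2 \cap H^1_0(\Omega) = \mathcal{H}^2_{0,D}(\Omega)$.
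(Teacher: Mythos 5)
Your construction of the resolvent operator $T$ on $(\mathcal H^2_{0,D}(\Omega),\mathcal Q_\sigma)$, its compactness and self-adjointness, the identification $\ker T=H^2_0(\Omega)$ with $\mathcal Q_\sigma$-orthogonal complement $\mathcal B_D(\Omega)$, the min--max formula, and the orthonormality of the $\hat v_j$ in $L^2(\partial\Omega)$ all follow the same route as the paper, which proves Theorem \ref{thm_DBS} by repeating the scheme of Theorem \ref{main-BSM}: a resolvent operator of the form $B^{-1}\circ J_1$ handled by the Hilbert--Schmidt theorem, together with the auxiliary operator $\gamma_1\circ B^{-1}\circ J_1'$ on $L^2(\partial\Omega)$ for the boundary basis. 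Your reduction of the completeness of $\left\{\hat v_j\right\}_{j=1}^\infty$ in $L^2(\partial\Omega)$ to the density of $\gamma_1(\mathcal H^2_{0,D}(\Omega))$ in $L^2(\partial\Omega)$ is correct, and it is exactly the statement that the kernel of the paper's auxiliary operator is trivial.

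The genuine gap is your proof of that density. First, the proposed construction does not produce $H^2$ functions on a general Lipschitz domain: a regularized distance $\rho$ satisfies only $|D^2\rho|\leq C d^{-1}$, where $d$ is the distance to $\partial\Omega$, and $d^{-1}$ is not square integrable near the boundary, so the product of $\rho$ with a smooth cut-off is in general not in $H^2(\Omega)$ (the idea works when the boundary is flat near the support, e.g. for polygons, but not for arbitrary $C^{0,1}$ boundaries). Second, and more seriously, the statement you aim at --- that every smooth compactly supported function on $\partial\Omega$ lies in $\gamma_1(\mathcal H^2_{0,D}(\Omega))$ --- is false for general Lipschitz domains: if $u\in H^2(\Omega)\cap H^1_0(\Omega)$ then $(0,\gamma_1(u))\in\Gamma(H^2(\Omega))$, and the compatibility condition \eqref{comp_cond} of \cite{geymonat_airy}, the very one used in Counterexample \ref{cex}, forces $\gamma_1(u)\,\nu\in H^{\frac{1}{2}}(\partial\Omega)$ in dimension two. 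For a Lipschitz (even $C^1$) graph boundary whose normal $\nu$ fails to be locally in $H^{\frac{1}{2}}$ (e.g. the graph of a lacunary Weierstrass-type $C^1$ function), a smooth bump $g$ equal to $1$ on a sub-arc would force $\nu\in H^{\frac{1}{2}}$ on that sub-arc, a contradiction. Hence the density of $\gamma_1(\mathcal H^2_{0,D}(\Omega))$ in $L^2(\partial\Omega)$ cannot be obtained by realizing smooth boundary data exactly, and as it stands the last step of your argument fails; it must be replaced by a genuinely different approximation or duality argument (note that the paper itself is terse on precisely this point, namely the triviality of the kernel of $\gamma_1\circ B^{-1}\circ J_1'$, so repairing this step means supplying a detail the paper leaves implicit rather than copying an argument it gives).
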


Problem \eqref{NBS} is understood in the weak sense as follows:
\begin{equation}\label{weak_NBS}
\int_{\Omega}(1-\sigma)D^2 u:D^2\varphi +\sigma\Delta u\Delta\varphi dx=\xi\int_{\partial\Omega}u\varphi d\sigma\,,\ \ \ \forall\varphi\in \mathcal{H}^2_{0,N}(\Omega),
\end{equation}
in the unknowns $u\in \mathcal{H}^2_{0,N}(\Omega)$, $\xi\in\mathbb R$. We have the following theorem.
\begin{thm}\label{thm_NBS}
Let $\Omega$ be a bounded domain in $\mathbb R^N$ of class $C^{0,1}$ and let $\sigma\in\big(-\frac{1}{N-1},1\big)$. The eigenvalues of problem \eqref{weak_NBS} have finite multiplicity and are given by a non-decreasing sequence of non-negative real numbers $\xi_j$ defined by
\begin{equation*}
\xi_j=\min_{\substack{U\subset \mathcal{H}^2_{0,N}(\Omega)\\{\rm dim}U=j}}\max_{\substack{u\in U\\u\ne 0}}\frac{\mathcal Q_{\sigma}(u,u)}{\int_{\partial\Omega}u^2d\sigma},
\end{equation*}
where each eigenvalue is repeated according to its multiplicity. The first eigenvalue $\xi_1=0$ has multiplicity one and the corresponding eigenfunctions are the constant functions on $\Omega$. Moreover, there exists a Hilbert basis $\left\{u_j\right\}_{j=1}^{\infty}$ of $\mathcal B_N(\Omega)$ of eigenfunctions $u_j$ associated with the eigenvalues $\xi_j$. Finally, by normalizing the eigenfunctions $u_j$ with respect to $\mathcal Q_{\sigma}$ for all $j\geq 2$, the functions $\hat u_j:=\sqrt{\xi_j}\gamma_0(u_j)$, $j\geq 2$, and $\hat u_1=|\partial\Omega|^{-1/2}$ define a Hilbert basis of $L^2(\partial\Omega)$ with respect to its standard scalar product.
\end{thm}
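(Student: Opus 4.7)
My plan is to follow the spectral theory for Steklov problems already templated by Theorem \ref{thm_DBS}, adapting it to the degeneracy caused by the fact that $\mathcal{Q}_\sigma$ vanishes on constants (which lie in $\mathcal{H}^2_{0,N}(\Omega)$). First I would equip $\mathcal{H}^2_{0,N}(\Omega)$ with the inner product $\langle u,v\rangle := \mathcal{Q}_\sigma(u,v)+(\gamma_0(u),\gamma_0(v))_{\partial\Omega}$, and verify that its associated norm is equivalent to the standard $H^2$-norm. Coercivity reduces to showing that $\mathcal{Q}_\sigma(u,u)=0$ and $\gamma_0(u)=0$ together force $u=0$: by coercivity of $\mathcal{Q}_\sigma$ modulo affine functions, $\mathcal{Q}_\sigma(u,u)=0$ makes $u$ affine; the condition $\gamma_1(u)=0$ on a Lipschitz domain forces $\nabla u=0$; and then $\gamma_0(u)=0$ gives $u=0$.

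Next I would define the operator $T:\mathcal{H}^2_{0,N}(\Omega)\to\mathcal{H}^2_{0,N}(\Omega)$ by $\langle Tu,v\rangle=(\gamma_0(u),\gamma_0(v))_{\partial\Omega}$ via Riesz representation. Because the trace map $\gamma_0$ is compact from $H^2(\Omega)$ into $L^2(\partial\Omega)$, $T$ is a compact, self-adjoint and positive operator. The weak formulation \eqref{weak_NBS} is equivalent to $Tu=(1+\xi)^{-1}u$, so the spectrum of \eqref{weak_NBS} is given by $\xi_j=t_j^{-1}-1$, where $\{t_j\}$ are the eigenvalues of $T$ arranged in non-increasing order. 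One checks that $\ker T=\{u:\gamma_0(u)=0\}\cap\mathcal{H}^2_{0,N}(\Omega)=H^2_0(\Omega)$, so the eigenfunctions with $\xi_j>0$ span the orthogonal complement of $H^2_0(\Omega)$ in $\mathcal{H}^2_{0,N}(\Omega)$, which by \eqref{BN}--\eqref{BN2} is precisely $\mathcal{B}_N(\Omega)$. Testing \eqref{weak_NBS} against $\varphi\equiv 1\in\mathcal{H}^2_{0,N}(\Omega)$ identifies the eigenvalue $\xi=0$ with eigenspace equal to the constants (hence $\xi_1=0$ has multiplicity one), and implies $\int_{\partial\Omega}u_j\,d\sigma=0$ for every $j\geq 2$. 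The min-max formula then follows from the standard Courant--Fischer principle applied to the quadratic forms $\mathcal{Q}_\sigma$ and $\|\gamma_0(\cdot)\|^2_{L^2(\partial\Omega)}$.

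For the $L^2(\partial\Omega)$-basis claim, orthonormality is a direct computation: for $j\neq k$ with $j,k\geq 2$, symmetry of \eqref{weak_NBS} in $(u_j,u_k)$ combined with $\xi_j\neq\xi_k$ (after reordering within eigenspaces) gives $(\gamma_0(u_j),\gamma_0(u_k))_{\partial\Omega}=0$; normalization of $u_j$ with respect to $\mathcal{Q}_\sigma$ yields $\int_{\partial\Omega}u_j^2 d\sigma=\xi_j^{-1}$, so $\|\hat u_j\|_{L^2(\partial\Omega)}=1$; and $(\hat u_1,\hat u_j)_{\partial\Omega}=0$ for $j\geq 2$ is the vanishing mean property proved above. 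The main obstacle is completeness, which I expect to be the delicate step. I would argue it as follows: given $f\in L^2(\partial\Omega)$ orthogonal to every $\hat u_j$, so that $\int_{\partial\Omega}f\,d\sigma=0$ and $\int_{\partial\Omega}f\gamma_0(u_j)d\sigma=0$ for $j\geq 2$, apply Lax--Milgram on the closed subspace $V:=\{v\in\mathcal{H}^2_{0,N}(\Omega):\int_{\partial\Omega}\gamma_0(v)d\sigma=0\}$ (on which $\mathcal{Q}_\sigma$ is coercive by a Poincar\'e-type inequality proved analogously to Step~1) to produce $w\in V$ with $\mathcal{Q}_\sigma(w,v)=(f,\gamma_0(v))_{\partial\Omega}$ for all $v\in V$; adding a suitable constant extends this to all of $\mathcal{H}^2_{0,N}(\Omega)$. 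Testing with $v=u_k$ ($k\geq 2$) yields $\mathcal{Q}_\sigma(w,u_k)=0$, so the expansion of $w$ in the Hilbert basis of $\mathcal{B}_N(\Omega)$ has only a constant component, hence $\gamma_0(w)$ is constant; combined with $\int_{\partial\Omega}\gamma_0(w)d\sigma=0$ this gives $\gamma_0(w)=0$, whence $(f,\gamma_0(v))_{\partial\Omega}=0$ for every $v\in\mathcal{H}^2_{0,N}(\Omega)$. Since $\gamma_0(\mathcal{H}^2_{0,N}(\Omega))$ is dense in $\{g\in L^2(\partial\Omega):\int g\,d\sigma=0\}$ (any mean-zero smooth datum admits an $H^2$-extension with zero normal trace on a Lipschitz domain, and smooth mean-zero functions are $L^2$-dense), we conclude $f=0$.
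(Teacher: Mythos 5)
The interior part of your argument is correct and is essentially the paper's own scheme (the paper proves Theorem \ref{thm_NBS} by the same construction as Theorems \ref{main-BSM} and \ref{main-BSL}): your inner product $\mathcal Q_{\sigma}(\cdot,\cdot)+(\gamma_0(\cdot),\gamma_0(\cdot))_{\partial\Omega}$ is the form in \eqref{tB} with $b=1$, your operator $T$ is the paper's $T_N$, and the identifications $\ker T=H^2_0(\Omega)$ and $(\ker T)^{\perp}=\mathcal B_N(\Omega)$, the simplicity of $\xi_1=0$ with constant eigenfunctions, the min--max formula, and the orthonormality and normalization computations for the $\hat u_j$ are all sound.

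The genuine gap is in the completeness step, which you yourself flag as delicate. You correctly reduce it to the density of $\gamma_0(\mathcal H^2_{0,N}(\Omega))$ in the mean-zero part of $L^2(\partial\Omega)$ (indeed the closed span of $\{\hat u_j\}_{j\geq 1}$ is exactly $\overline{\gamma_0(\mathcal B_N(\Omega))}=\overline{\gamma_0(\mathcal H^2_{0,N}(\Omega))}$, so completeness is \emph{equivalent} to this density), but the justification you give --- that any mean-zero smooth datum admits an $H^2(\Omega)$-extension with vanishing normal derivative on a Lipschitz domain --- is false, and the paper itself refutes it: in Counterexample \ref{cex}, on the unit square the datum $\gamma_0(x_1)$ (subtract the constant $\tfrac12$ to make it mean-zero; constants are harmless here since they are traces of elements of $\mathcal H^2_{0,N}(\Omega)$) is the trace of a polynomial, yet no $u\in H^2(\Omega)$ satisfies $\gamma_0(u)=\gamma_0(x_1)$ and $\gamma_1(u)=0$, because the compatibility condition \eqref{comp_cond} fails at the corners. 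So attainability of the pair $(g,0)$ for smooth $g$ is precisely what breaks down on Lipschitz domains, and your completeness proof collapses at this point; the Lax--Milgram detour is not the issue (it is in fact unnecessary: $f\perp\hat u_j$ for all $j$ already gives $f\perp\gamma_0(\mathcal H^2_{0,N}(\Omega))$ by continuity of $\gamma_0$ on $\mathcal B_N(\Omega)$). The density statement itself is not a side remark one can wave away: it carries the full content of the assertion that the $\hat u_j$ form a Hilbert basis of $L^2(\partial\Omega)$ (the paper's route via the boundary operator $\gamma_0\circ B_N^{(-1)}\circ J_0'$ requires exactly the triviality of that operator's kernel, which is the same density), so it needs a genuine argument --- for instance input from the biharmonic Neumann problem on Lipschitz domains or a careful approximation of the attainable trace class --- and cannot be deduced from the extension property you invoke.
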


The proofs of Theorems \ref{thm_DBS} and \ref{thm_NBS} can be carried out exactly as those of Theorems \ref{main-BSM} and \ref{main-BSL} presented in Subsections \ref{sub_BSM} and \ref{sub_BSL}. Note that the condition $\sigma\in\big(-\frac{1}{N-1},1\big)$ is used to guarantee the coercivity of the form $\mathcal Q_{\sigma}$ discussed in the previous section.



\subsection{The $BS_{\mu}$ eigenvalue problem}\label{sub_BSM}
For any $\mu\in\mathbb R$, the weak formulation of problem \eqref{BSM} reads
\begin{equation}\label{weak_BSM}
\int_{\Omega}(1-\sigma)D^2 v:D^2\varphi +\sigma\Delta v\Delta\varphi dx-\mu\int_{\partial\Omega}v\varphi d\sigma=\lambda(\mu)\int_{\partial\Omega}\frac{\partial v}{\partial\nu}\frac{\partial\varphi}{\partial\nu} d\sigma\,,\ \ \ \forall\varphi\in H^2(\Omega),
\end{equation}
in the unknowns $v\in H^2(\Omega)$, $\lambda(\mu)\in\mathbb R$, and can be re-written as
\begin{equation*}
\mathcal{Q}_{\mu,D}(v,\varphi)=\lambda(\mu)\left(\gamma_1(v),\gamma_1(\varphi)\right)_{\partial\Omega}\,,\ \ \ \forall\varphi\in H^2(\Omega).
\end{equation*}
We prove that for all $\mu<0$, problem \eqref{BSM} admits an increasing sequence of eigenvalues of finite multiplicity diverging to $+\infty$ and the corresponding eigenfunctions form a basis of $H^2_{\mu,D}(\Omega)$, where $H^2_{\mu,D}(\Omega)$ denotes the orthogonal complement of $\mathcal{H}^2_{0,N}(\Omega)$ in $H^2(\Omega)$ with respect to the scalar product $\mathcal{Q}_{\mu,D}$, namely
\begin{equation}\label{HM}
H^2_{\mu,D}(\Omega)=\left\{v\in H^2(\Omega):\mathcal{Q}_{\mu,D}(v,\varphi)=0\,,\ \forall\varphi\in \mathcal{H}^2_{0,N}(\Omega)\right\}.
\end{equation}
To do so, we recast problem \eqref{weak_BSM} in the form of an eigenvalue problem for a compact self-adjoint operator acting on a Hilbert space. We consider on $H^2(\Omega)$ the equivalent norm 
$$
\|v\|_{\mu,D}^2=\mathcal{Q}_{\mu,D}(v,v)
$$
which is associated with the scalar product defined by 
$$
\langle v,\varphi\rangle_{\mu,D}=\mathcal{Q}_{\mu,D}(v,\varphi),
$$
for all $v,\varphi\in H^2(\Omega)$. Then we define the operator $B_{\mu,D}$ from $H^2(\Omega)$ to its dual $(H^2(\Omega))'$ by setting
$$
B_{\mu,D}(v)[\varphi]=\langle v,\varphi\rangle_{\mu,D}\,,\ \ \ \forall v,\varphi\in H^2(\Omega).
$$
By the Riesz Theorem it follows that $B_{\mu,D}$ is a surjective isometry. Then we consider the operator $J_1$ from $H^2(\Omega)$ to $(H^2(\Omega))'$ defined by
\begin{equation}\label{J1}
J_1(v)[\varphi]=(\gamma_1(v),\gamma_1(\varphi))_{\partial\Omega}\,,\ \ \ \forall v,\varphi\in H^2(\Omega).
\end{equation}
The operator $J_1$ is compact since $\gamma_1$ is a compact operator from $H^2(\Omega)$ to $L^2(\partial\Omega)$. Finally, we set 
\begin{equation}\label{Tm}
T_{\mu,D}=B_{\mu,D}^{(-1)}\circ J_1.
\end{equation}
From the compactness of $J_1$ and the boundedness of $B_{\mu,D}$ it follows that $T_{\mu,D}$ is a compact operator from $H^2(\Omega)$ to itself. Moreover, $\langle T_{\mu,D}(v),\varphi\rangle_{\mu,D}=(\gamma_1(v),\gamma_1(\varphi))_{\partial\Omega}$, for all $u,\varphi\in H^2(\Omega)$, hence $T_{\mu,D}$ is self-adjoint. Note that  ${\rm Ker}\,T_{\mu,D}={\rm Ker}\,J_1=\mathcal{H}^2_{0,N}(\Omega)$ and the non-zero eigenvalues of $T_{\mu,D}$ coincide with the reciprocals of the eigenvalues of \eqref{weak_BSM}, the eigenfunctions being the same.

We are now ready to prove the following theorem.

\begin{thm}\label{main-BSM}
Let $\Omega$ be a bounded domain in $\mathbb R^N$ of class $C^{0,1}$ and let $\sigma\in\big(-\frac{1}{N-1},1\big)$. Let $\mu<0$. Then the eigenvalues of \eqref{weak_BSM} have finite multiplicity and are given by a non-decreasing sequence of positive real numbers $\left\{\lambda_j(\mu)\right\}_{j=1}^{\infty}$ defined by
\begin{equation}\label{minmaxM}
\lambda_j(\mu)=\min_{\substack{V\subset H^2(\Omega)\\{\rm dim}V=j}}\max_{\substack{v\in V\\\frac{\partial v}{\partial\nu}\ne 0}}\frac{\mathcal{Q}_{\mu,D}(v,v)}{\int_{\partial\Omega}\left(\frac{\partial v}{\partial\nu}\right)^2d\sigma},
\end{equation}
where each eigenvalue is repeated according to its multiplicity. 

Moreover there exists a basis $\left\{v_{j,\mu}\right\}_{j=1}^{\infty}$ of $H^2_{\mu,D}(\Omega)$ of eigenfunctions $v_{j,\mu}$ associated with the eigenvalues $\lambda_j(\mu)$.

By normalizing the eigenfunctions $v_{j,\mu}$ with respect to $\mathcal{Q}_{\mu,D}$, the functions defined by $\left\{\hat v_{j,\mu}\right\}_{j=1}^{\infty}:=\left\{\sqrt{\lambda_{j}(\mu)}\gamma_1(v_{j,\mu})\right\}_{j=1}^{\infty}$ form a Hilbert basis of $L^2(\partial\Omega)$ with respect to its standard scalar product.
\end{thm}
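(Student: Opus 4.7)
The plan is to recast \eqref{weak_BSM} as a spectral problem for the compact self-adjoint operator $T_{\mu,D}$ already constructed in the preceding discussion, and then translate the abstract spectral-theoretic statements into the statements of the theorem.

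The first thing to justify is that $\mathcal{Q}_{\mu,D}$ is an inner product on $H^2(\Omega)$ inducing a norm equivalent to the standard one, which is what makes the whole Riesz-isometry construction of $T_{\mu,D}$ legitimate. Since $\sigma \in \bigl(-\tfrac{1}{N-1},1\bigr)$ the form $\mathcal{Q}_{\sigma}$ is coercive on $H^2(\Omega)$, and since $\mu<0$ the added boundary term $-\mu\|\gamma_0(\cdot)\|^2_{L^2(\partial\Omega)}$ is non-negative. Combined with the norm equivalence $\|u\|_{H^2(\Omega)}^{2}\sim\|D^2 u\|^2_{L^2(\Omega)}+\|\gamma_0(u)\|^2_{L^2(\partial\Omega)}$ recalled in Section \ref{pre}, together with continuity of the trace in the reverse direction, this yields $\|u\|_{\mu,D}\sim\|u\|_{H^2(\Omega)}$.

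Next, I would invoke the spectral theorem for compact self-adjoint operators on the Hilbert space $(H^2(\Omega),\langle\cdot,\cdot\rangle_{\mu,D})$ applied to $T_{\mu,D}$. This produces the $\mathcal{Q}_{\mu,D}$-orthogonal decomposition $H^2(\Omega)=\mathrm{Ker}(T_{\mu,D})\oplus \mathrm{Ker}(T_{\mu,D})^{\perp}$, which by \eqref{HM} is exactly $\mathcal{H}^2_{0,N}(\Omega)\oplus H^2_{\mu,D}(\Omega)$, together with a $\mathcal{Q}_{\mu,D}$-orthonormal basis $\{v_{j,\mu}\}_{j=1}^{\infty}$ of $H^2_{\mu,D}(\Omega)$ consisting of eigenfunctions $T_{\mu,D}v_{j,\mu}=t_j v_{j,\mu}$, with $t_j\to 0^{+}$ and each $t_j$ of finite multiplicity. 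Testing the eigenvalue equation against $v_{j,\mu}$ gives $t_j=\|\gamma_1(v_{j,\mu})\|^2_{L^2(\partial\Omega)}>0$ (strict because $v_{j,\mu}\notin\mathrm{Ker}(T_{\mu,D})$), and setting $\lambda_j(\mu):=1/t_j$ produces the asserted non-decreasing, divergent sequence of positive eigenvalues of \eqref{weak_BSM}. The min-max formula \eqref{minmaxM} is simply the Courant--Fischer characterization of $1/t_j$ rewritten through the identity $\langle T_{\mu,D}v,v\rangle_{\mu,D}=\|\gamma_1(v)\|^2_{L^2(\partial\Omega)}$ and the equivalence $\|v\|_{\mu,D}^2=\mathcal{Q}_{\mu,D}(v,v)$.

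It remains to prove that $\hat v_{j,\mu}=\sqrt{\lambda_j(\mu)}\gamma_1(v_{j,\mu})$ is a Hilbert basis of $L^2(\partial\Omega)$. Orthonormality is direct: from \eqref{weak_BSM} with $\varphi=v_{i,\mu}$ we get $(\gamma_1(v_{i,\mu}),\gamma_1(v_{j,\mu}))_{\partial\Omega}=\lambda_j(\mu)^{-1}\mathcal{Q}_{\mu,D}(v_{i,\mu},v_{j,\mu})=\lambda_j(\mu)^{-1}\delta_{ij}$, so $(\hat v_{i,\mu},\hat v_{j,\mu})_{\partial\Omega}=\delta_{ij}$. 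Completeness is the part I expect to be the main obstacle. I would pick $f\in L^2(\partial\Omega)$ with $(f,\hat v_{j,\mu})_{\partial\Omega}=0$ for all $j$ and represent the continuous linear functional $\varphi\mapsto(f,\gamma_1(\varphi))_{\partial\Omega}$ on $H^2(\Omega)$ in the form $\mathcal{Q}_{\mu,D}(w,\cdot)$ for some $w\in H^2(\Omega)$ via Riesz. Testing against $\varphi\in\mathcal{H}^2_{0,N}(\Omega)$ (where $\gamma_1(\varphi)=0$) forces $w\in H^2_{\mu,D}(\Omega)$, while testing against $\varphi=v_{j,\mu}$ combined with the hypothesis on $f$ forces $\mathcal{Q}_{\mu,D}(w,v_{j,\mu})=0$ for all $j$; the basis property just proved then gives $w=0$, hence $(f,\gamma_1(\varphi))_{\partial\Omega}=0$ for every $\varphi\in H^2(\Omega)$. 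To conclude $f=0$ I would invoke the density of $\gamma_1(H^2(\Omega))$ in $L^2(\partial\Omega)$, which follows a posteriori from Theorem \ref{thm_DBS} since the normal derivatives of the \eqref{weak_DBS} eigenfunctions already span a dense subspace of $L^2(\partial\Omega)$.
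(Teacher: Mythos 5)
Up to the completeness step your argument coincides with the paper's: the coercivity of $\mathcal{Q}_{\mu,D}$ for $\mu<0$, the Hilbert--Schmidt theorem applied to $T_{\mu,D}$ with ${\rm Ker}\,T_{\mu,D}=\mathcal{H}^2_{0,N}(\Omega)$, the identification $\lambda_j(\mu)=1/t_j$, the Courant--Fischer characterization, and the orthonormality computation for $\hat v_{j,\mu}$ are all exactly what the paper does (the paper phrases the completeness part through a second compact self-adjoint operator $T_{\mu,D}'=\gamma_1\circ B_{\mu,D}^{(-1)}\circ J_1'$ acting on $L^2(\partial\Omega)$, which is just the operator-theoretic packaging of your Riesz-representation argument).

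The genuine problem is your last step. To conclude $f=0$ from $(f,\gamma_1(\varphi))_{\partial\Omega}=0$ for all $\varphi\in H^2(\Omega)$ you invoke the density of $\gamma_1(H^2(\Omega))$ in $L^2(\partial\Omega)$, and you justify it by citing Theorem \ref{thm_DBS}. This is circular within the paper: the paper explicitly states that the proofs of Theorems \ref{thm_DBS} and \ref{thm_NBS} are ``carried out exactly as'' the proofs of Theorems \ref{main-BSM} and \ref{main-BSL}, so the assertion in Theorem \ref{thm_DBS} that the functions $\sqrt{\eta_j}\,\gamma_1(v_j)$ form a Hilbert basis of $L^2(\partial\Omega)$ is precisely the same kind of completeness statement you are trying to prove, and it is not available as an independent input. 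Moreover the fact you need is not innocuous on a merely Lipschitz domain: unlike the density of $\gamma_0(C^\infty(\overline\Omega))$ in $L^2(\partial\Omega)$, the density of $\gamma_1(H^2(\Omega))$ involves the only-a.e.-defined normal $\nu$, and showing that $(f,\nabla\varphi\cdot\nu)_{\partial\Omega}=0$ for all admissible $\varphi$ forces $f=0$ requires a separate argument (equivalently, that the vector measure $f\nu\,d\sigma$ has vanishing distributional divergence only when $f=0$). Note that the paper's own formulation has the same point hidden in it: applying Hilbert--Schmidt to $T_{\mu,D}'$ yields a basis of $({\rm Ker}\,T_{\mu,D}')^{\perp}$, and ${\rm Ker}\,T_{\mu,D}'$ is exactly the orthogonal complement of $\gamma_1(H^2(\Omega))$ in $L^2(\partial\Omega)$, so its triviality is the same density statement. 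To make your proof self-contained you must either supply a direct proof of this density for $C^{0,1}$ domains or reorganize so that it is established before (and independently of) both Theorem \ref{main-BSM} and Theorem \ref{thm_DBS}; as written, the citation of Theorem \ref{thm_DBS} begs the question.
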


\begin{proof}
  Since ${\rm Ker}\, T_{\mu,D}=\mathcal H^2_{0,N}$, by the Hilbert-Schmidt Theorem applied to the compact self-adjoint operator $T_{\mu,D}$ it follows that $T_{\mu,D}$ admits an increasing sequence of positive eigenvalues $\left\{q_j\right\}_{j=1}^{\infty}$ bounded from above, converging to zero and a corresponding Hilbert basis $\left\{v_{j,\mu}\right\}_{j=1}^{\infty}$ of eigenfunctions of $H^2_{\mu,D}(\Omega)$. Since $q\ne 0$ is an eigenvalue of $T_{\mu,D}$ if and only if $\lambda=\frac{1}{q}$ is an eigenvalue of \eqref{weak_BSM} with the same eigenfunctions, we deduce the validity of the first part of the statement. In particular, formula \eqref{minmaxM} follows from the standard min-max formula for the eigenvalues of compact self-adjoint operators. Note that $\lambda_1(\mu)>0$, since $Q_{\mu,D}(v,v)=0$ if and only if $v=0$.

To prove the final part of the theorem, we recast problem \eqref{weak_BSM} into an eigenvalue problem for the compact self-adjoint operator $T_{\mu,D}'=\gamma_1\circ B_{\mu,D}^{(-1)}\circ J_1'$, where $J_1'$ denotes the map from $L^2(\partial\Omega)$ to the dual of $H^2(\Omega)$ defined by
$$
J_1'(v)[\varphi]=(v,\gamma_1(\varphi))_{\partial\Omega}\,,\ \ \ \forall v\in L^2(\partial\Omega),\varphi\in H^2(\Omega).
$$
We apply again the Hilbert-Schmidt Theorem and observe that $T_{\mu,D}$ and $T_{\mu,D}'$ admit the same non-zero eigenvalues and that the eigenfunctions of $T_{\mu,D}'$ are exactly the normal derivatives of the eigenfunctions of $T_{\mu,D}$. From \eqref{weak_BSM} we deduce that if the eigenfunctions $v_{j,\mu}$ of $T_{\mu,D}$ are normalized by $\mathcal{Q}_{\mu,D}(v_{j,\mu},v_{k,\mu})=\delta_{jk}$, where $\delta_{jk}$ is the Kronecker symbol, then the normalization of the traces of their normal derivatives in $L^2(\partial\Omega)$ are obtained by multiplying $\gamma_1(v_{j,\mu})$ by $\sqrt{\lambda_j(\mu)}$. This concludes the proof.
\end{proof}

We present now a few results on the behavior of the eigenvalues of \eqref{weak_BSM}  for $\mu\in(-\infty,0)$, in particular we prove a Lipschitz continuity result for the eigenvalues $\lambda_j(\mu)$ with respect to $\mu$ and find their limits as $\mu\rightarrow-\infty$. 

\begin{thm}\label{Lipschitz-continuity-M}
For any $j\in\mathbb N$ and $\delta>0$, the function $\lambda_j:(-\infty,-\delta]\rightarrow (0,+\infty)$ which takes $\mu\in(-\infty,-\delta]$ to $\lambda_j(\mu)\in(0,+\infty)$ is Lipschitz continuous on $(-\infty,-\delta]$. 
\end{thm}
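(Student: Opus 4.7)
The plan is to exploit the min-max characterization \eqref{minmaxM} together with the affine dependence on $\mu$ of the Rayleigh quotient
$$R_\mu(v) := \frac{\mathcal{Q}_\sigma(v,v) - \mu\,\|\gamma_0(v)\|_{L^2(\partial\Omega)}^{2}}{\|\gamma_1(v)\|_{L^2(\partial\Omega)}^{2}},$$
defined for $v\in H^2(\Omega)$ with $\gamma_1(v)\not\equiv 0$. The main algebraic identity is
$$R_{\mu_1}(v)-R_{\mu_2}(v)=(\mu_2-\mu_1)\,\frac{\|\gamma_0(v)\|_{L^2(\partial\Omega)}^{2}}{\|\gamma_1(v)\|_{L^2(\partial\Omega)}^{2}}.$$
The key a priori estimate that I will use is that, for every $\mu\leq -\delta$ and every admissible $v$,
$$\frac{\|\gamma_0(v)\|_{L^2(\partial\Omega)}^{2}}{\|\gamma_1(v)\|_{L^2(\partial\Omega)}^{2}}\leq\frac{R_\mu(v)}{\delta},$$
which follows at once from $\mathcal{Q}_\sigma(v,v)\geq 0$ and $-\mu\geq\delta$.

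To make the previous estimate quantitatively useful I need a uniform upper bound for $\lambda_j(\mu)$ on $(-\infty,-\delta]$. I would obtain the stronger bound $\lambda_j(\mu)\leq\eta_j$ valid for every $\mu<0$ by choosing as trial subspace in \eqref{minmaxM} the linear span of the first $j$ eigenfunctions of \eqref{weak_DBS}, which lie in $\mathcal{H}^2_{0,D}(\Omega)$. On such a subspace $\gamma_0$ vanishes identically, so $\mathcal{Q}_{\mu,D}$ reduces to $\mathcal{Q}_\sigma$ and the corresponding Rayleigh quotient coincides with the one characterizing the \eqref{weak_DBS} eigenvalues; by Theorem \ref{thm_DBS} its maximum over this subspace equals $\eta_j$.

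Finally, assuming without loss of generality $\mu_1<\mu_2\leq-\delta$, I take $V_{\mu_2}$ to be an optimal $j$-dimensional subspace achieving $\lambda_j(\mu_2)$ in \eqref{minmaxM} (spanned by the first $j$ eigenfunctions provided by Theorem \ref{main-BSM} at $\mu_2$). For every $v\in V_{\mu_2}$ combining the two displayed bounds above yields
$$R_{\mu_1}(v)\leq R_{\mu_2}(v)+(\mu_2-\mu_1)\,\frac{R_{\mu_2}(v)}{\delta}\leq\lambda_j(\mu_2)\left(1+\frac{\mu_2-\mu_1}{\delta}\right),$$
and plugging $V_{\mu_2}$ into \eqref{minmaxM} at $\mu_1$ gives
$$\lambda_j(\mu_1)-\lambda_j(\mu_2)\leq\frac{\lambda_j(\mu_2)}{\delta}\,|\mu_1-\mu_2|\leq\frac{\eta_j}{\delta}\,|\mu_1-\mu_2|.$$
Since $\mu\mapsto\mathcal{Q}_{\mu,D}(v,v)$ is pointwise non-increasing in $\mu$, so is $\mu\mapsto\lambda_j(\mu)$ by \eqref{minmaxM}; hence $\lambda_j(\mu_1)\geq\lambda_j(\mu_2)$ and the previous inequality is already expressed in absolute value, which gives the Lipschitz estimate with constant $\eta_j/\delta$. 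The only delicate point, which I regard as the principal obstacle, is obtaining the uniform upper bound on $\lambda_j(\mu)$: without it one would only extract a local-Lipschitz estimate with a constant possibly blowing up as $\mu\to-\infty$, and the trick of testing with DBS-eigenfunctions (so that the $\mu$-dependence drops out) is precisely what decouples the constant from $\mu$.
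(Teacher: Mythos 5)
Your proposal is correct and follows essentially the same route as the paper: the affine dependence of the Rayleigh quotient on $\mu$, the bound $\|\gamma_0(v)\|^2_{L^2(\partial\Omega)}/\|\gamma_1(v)\|^2_{L^2(\partial\Omega)}\leq R_\mu(v)/\delta$ coming from $\mathcal Q_\sigma\geq 0$ and $-\mu\geq\delta$, and the min-max principle \eqref{minmaxM}. The only (welcome) difference is that you make explicit the uniform bound $\lambda_j(\mu)\leq\eta_j$ via the comparison with \eqref{minmax_DBS}, which turns the Lipschitz constant into the $\mu$-independent quantity $\eta_j/\delta$, whereas the paper stops at the estimate $|\lambda_j(\mu_1)-\lambda_j(\mu_2)|\leq\lambda_j(\mu_1)|\mu_2-\mu_1|/\delta$ and leaves this uniform bound implicit.
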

\begin{proof}
Without loss of generality we assume that $\mu_1,\mu_2\in(-\infty,-\delta]$  and  that $\mu_1<\mu_2$. Let $v\in H^2(\Omega)$. Then
\begin{multline*}
0\leq\frac{\mathcal Q_{\mu_1,D}(v,v)}{\int_{\partial\Omega}\left(\frac{\partial v}{\partial\nu}\right)^2d\sigma}-\frac{\mathcal Q_{\mu_2,D}(v,v)}{\int_{\partial\Omega}\left(\frac{\partial v}{\partial\nu}\right)^2d\sigma}=(\mu_2-\mu_1)\frac{\int_{\partial\Omega}v^2d\sigma}{\int_{\partial\Omega}\left(\frac{\partial v}{\partial\nu}\right)^2d\sigma}\\
\leq -\frac{(\mu_2-\mu_1)}{\mu_1}\frac{\mathcal Q_{\mu_1,D}(v,v)}{\int_{\partial\Omega}\left(\frac{\partial v}{\partial\nu}\right)^2d\sigma}.
\end{multline*}
Hence
\begin{equation}\label{lip-M-1}
\frac{\mathcal Q_{\mu_1,D}(v,v)}{\int_{\partial\Omega}\left(\frac{\partial v}{\partial\nu}\right)^2d\sigma}\geq\frac{\mathcal Q_{\mu_2,D}(v,v)}{\int_{\partial\Omega}\left(\frac{\partial v}{\partial\nu}\right)^2d\sigma}
\end{equation}
and
\begin{equation}\label{lip-M-2}
\frac{\mathcal Q_{\mu_2,D}(v,v)}{\int_{\partial\Omega}\left(\frac{\partial v}{\partial\nu}\right)^2d\sigma}\geq\frac{\mathcal Q_{\mu_1,D}(v,v)}{\int_{\partial\Omega}\left(\frac{\partial v}{\partial\nu}\right)^2d\sigma}\left(1+\frac{(\mu_2-\mu_1)}{\mu_1}\right)
\end{equation}

By taking the infimum and the supremum over $j$ dimensional subspaces of $H^2(\Omega)$ into \eqref{lip-M-1} and \eqref{lip-M-2}, and by \eqref{minmaxM}, we get
\begin{equation*}
|\lambda_j(\mu_1)-\lambda_j(\mu_2)|\leq\frac{\lambda_j(\mu_1)}{|\mu_1|}|\mu_2-\mu_1|\leq\lambda_j(\mu_1)\frac{|\mu_2-\mu_1|}{\delta}.
\end{equation*}
This concludes the proof.
\end{proof}

We now investigate the behavior of the eigenvalues $\lambda_j(\mu)$ as $\mu\rightarrow-\infty$. First, we need to recall a few facts about the convergence of operators defined on variable spaces. As customary, we consider families of spaces and operators depending on a small parameter $\varepsilon\geq 0$ with $\varepsilon\to 0$. This will be applied later with $\varepsilon=-\frac{1}{\mu}$ and $\mu\to-\infty$.

Let us denote by $\mathcal H_{\varepsilon}$ a family of Hilbert spaces for all $\varepsilon\in[0,\varepsilon_0)$ and assume that there exists a corresponding family of linear operators $E_{\varepsilon}:\mathcal H_0\rightarrow\mathcal H_{\varepsilon}$ such that, for any $u\in\mathcal H_0$ 
$$
\|E_{\varepsilon}(u)\|_{\mathcal H_{\varepsilon}}\rightarrow\|u\|_{\mathcal H_0}\,,\ \ \ {\rm as\ }\varepsilon\rightarrow 0.
$$
We recall the definition of compact convergence of operators in the sense of \cite{vainikko}.
\begin{defn}\label{Ecompact}
We say that a family $\left\{K_{\varepsilon}\right\}_{\varepsilon\in[0,\varepsilon_0)}$ of compact operators $K_{\varepsilon}\in\mathcal L(\mathcal H_{\varepsilon})$  converges compactly to $K_0$ if
\begin{enumerate}[i)]
\item for any $\left\{u_{\varepsilon}\right\}_{\varepsilon\in(0,\varepsilon_0)}$ with $\|u_{\varepsilon}-E_{\varepsilon}(u)\|_{\mathcal H_{\varepsilon}}\rightarrow 0$ as $\varepsilon\rightarrow 0$, then $\|K_{\varepsilon}(u_{\varepsilon})-E_{\varepsilon}(K_0(u))\|_{\mathcal H_{\varepsilon}}\rightarrow 0$ as $\varepsilon\rightarrow 0$;
\item for any $\left\{u_{\varepsilon}\right\}_{\varepsilon\in(0,\varepsilon_0)}$ with $u_{\varepsilon}\in\mathcal H_{\varepsilon}$, $\|u_{\varepsilon}\|_{\mathcal H_{\varepsilon}}=1$, then $\left\{K_{\varepsilon}(u_{\varepsilon})\right\}_{\varepsilon\in(0,\varepsilon_0)}$ is precompact in the sense that for all sequences $\varepsilon_n\rightarrow 0$ there exist a sub-sequence $\varepsilon_{n_k}\rightarrow 0$ and $w\in\mathcal H_0$ such that $\|K_{\varepsilon_{n_k}}(u_{\varepsilon_{n_k}})-E_{\varepsilon_{n_k}}(w)\|_{\mathcal H_{\varepsilon_{n_k}}}\rightarrow 0$ as $k\rightarrow+\infty$.
\end{enumerate}
\end{defn}
We also recall the following theorem, where by spectral convergence of a family of operators we mean the convergence of the eigenvalues and the convergence of the eigenfunctions in the sense of \cite{vainikko}, see also \cite[\S2]{ferrero_lamberti}.
\begin{thm}\label{spectral_convergence}
Let $\left\{K_{\varepsilon}\right\}_{\varepsilon\in[0,\varepsilon_0)}$ be non-negative, compact self-adjoint operators in the Hil\-bert spaces $\mathcal H_{\varepsilon}$. Assume that their eigenvalues are given by $\left\{\sigma_j(\varepsilon)\right\}_{j=1}^{\infty}$. If $K_{\varepsilon}$ compactly converge to $K_0$, then there is spectral convergence of $K_{\varepsilon}$ to $K_0$ as $\varepsilon\rightarrow 0$. In particular, for every $j\in\mathbb N$ $\sigma_j(\varepsilon)\rightarrow\sigma_j(0)\,,\ \ \ {\rm as\ }\varepsilon\rightarrow 0$.
\end{thm}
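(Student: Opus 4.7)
My plan is to prove the theorem in the classical way for spectral stability under compact convergence, combining the Courant--Fischer min-max characterization with the two defining properties of compact convergence in Definition \ref{Ecompact}. Since each $K_\varepsilon$ is non-negative, compact and self-adjoint, I order its eigenvalues non-increasingly as $\sigma_1(\varepsilon)\geq\sigma_2(\varepsilon)\geq\cdots\to 0$ and work with the representation
\[
\sigma_j(\varepsilon)=\max_{\substack{V\subset\mathcal H_\varepsilon\\ \dim V=j}}\min_{\substack{v\in V\\ \|v\|_{\mathcal H_\varepsilon}=1}}\langle K_\varepsilon v,v\rangle_{\mathcal H_\varepsilon}.
\]
The argument then splits into a lower semicontinuity estimate, an upper semicontinuity estimate via subsequential extraction, and the reading off of eigenfunction convergence. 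A preliminary observation used throughout is that by polarization and the assumption $\|E_\varepsilon(u)\|_{\mathcal H_\varepsilon}\to\|u\|_{\mathcal H_0}$, inner products are preserved: $\langle E_\varepsilon(u),E_\varepsilon(v)\rangle_{\mathcal H_\varepsilon}\to\langle u,v\rangle_{\mathcal H_0}$ for every $u,v\in\mathcal H_0$.

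\emph{Lower bound.} Fix a Hilbert basis $\{w_k\}_{k\in\mathbb N}$ of $\mathcal H_0$ of eigenfunctions with $K_0 w_k=\sigma_k(0)w_k$ and define $V_\varepsilon:=\mathrm{span}\{E_\varepsilon(w_1),\ldots,E_\varepsilon(w_j)\}$. The Gram matrix of $\{E_\varepsilon(w_k)\}_{k=1}^j$ converges to the identity, so $\dim V_\varepsilon=j$ for small $\varepsilon$. Property i) applied to each $w_k$ gives $K_\varepsilon E_\varepsilon(w_k)=E_\varepsilon(\sigma_k(0) w_k)+o(1)$ in $\mathcal H_\varepsilon$, hence $\langle K_\varepsilon E_\varepsilon(w_i),E_\varepsilon(w_k)\rangle_{\mathcal H_\varepsilon}\to\sigma_k(0)\delta_{ik}$. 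Therefore the restriction of $\langle K_\varepsilon\cdot,\cdot\rangle_{\mathcal H_\varepsilon}$ to $V_\varepsilon$ converges to the diagonal form with entries $\sigma_1(0),\ldots,\sigma_j(0)$ on $\mathbb R^j$, and the minimum over the unit sphere of $V_\varepsilon$ converges to $\sigma_j(0)$. The min-max formula then yields $\liminf_{\varepsilon\to 0}\sigma_j(\varepsilon)\geq\sigma_j(0)$.

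\emph{Upper bound and eigenfunction convergence.} Fix an arbitrary sequence $\varepsilon_n\to 0$ and $\mathcal H_{\varepsilon_n}$-orthonormal eigenvectors $u_1^n,\ldots,u_j^n$ of $K_{\varepsilon_n}$ associated with $\sigma_1(\varepsilon_n),\ldots,\sigma_j(\varepsilon_n)$. After extracting subsequences I may assume $\sigma_i(\varepsilon_n)\to\sigma_i^*\in[0,\infty)$ for $i=1,\ldots,j$, with $\sigma_1^*\geq\cdots\geq\sigma_j^*$. Property ii) applied to each sequence $\{u_i^n\}$, together with a diagonal extraction, yields elements $z_i\in\mathcal H_0$ with $\|K_{\varepsilon_n}u_i^n-E_{\varepsilon_n}(z_i)\|_{\mathcal H_{\varepsilon_n}}\to 0$, i.e.\ $\sigma_i(\varepsilon_n)u_i^n=E_{\varepsilon_n}(z_i)+o(1)$. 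Passing the norm and inner product relations of $\{u_i^n\}$ to the limit gives $\|z_i\|_{\mathcal H_0}=\sigma_i^*$ and $\langle z_i,z_k\rangle_{\mathcal H_0}=\sigma_i^*\sigma_k^*\delta_{ik}$. If $\sigma_j^*>0$ then $w_i:=z_i/\sigma_i^*$ is orthonormal in $\mathcal H_0$, and applying $K_{\varepsilon_n}$ once more together with property i) shows $K_0w_i=\sigma_i^*w_i$. Courant--Fischer with test subspace $\mathrm{span}\{w_1,\ldots,w_j\}$ yields $\sigma_j(0)\geq\sigma_j^*$, hence $\limsup_{\varepsilon\to 0}\sigma_j(\varepsilon)\leq\sigma_j(0)$. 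The case $\sigma_j^*=0$ is trivial as $\sigma_j(0)\geq 0$. Convergence of the eigenfunctions in the sense of Vainikko is precisely the relation $u_i^n=E_{\varepsilon_n}(w_i)+o(1)$ extracted along the subsequence.

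The main technical obstacle is the bookkeeping across the variable Hilbert spaces: one must verify that Gram matrices computed in $\mathcal H_\varepsilon$ converge to the correct Gram matrices in $\mathcal H_0$ (used both to ensure that the test subspaces $V_\varepsilon$ are genuinely $j$-dimensional and that the limits $z_i$ preserve orthogonality). All such passages reduce to the compatibility $\langle E_\varepsilon(u),E_\varepsilon(v)\rangle_{\mathcal H_\varepsilon}\to\langle u,v\rangle_{\mathcal H_0}$ obtained by polarization. This is precisely the result proved in \cite{vainikko}, and in the interest of brevity it would be reasonable simply to invoke the reference once the notational setup is in place.
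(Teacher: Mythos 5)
Your argument is correct in substance, but note that the paper does not actually prove this theorem: it is recalled from Va\u{\i}nikko's work (see \cite{vainikko}, and \cite[\S 2]{ferrero_lamberti}), with ``spectral convergence'' defined by reference to that paper. So your self-contained min--max proof is a genuinely different route from the paper's (which is simply a citation). What your approach buys is transparency: it shows that eigenvalue convergence needs nothing beyond the two defining properties of compact convergence, the polarization remark $\langle E_\varepsilon u,E_\varepsilon v\rangle_{\mathcal H_\varepsilon}\to\langle u,v\rangle_{\mathcal H_0}$, and Courant--Fischer, and the two halves (test spaces $\mathrm{span}\{E_\varepsilon(w_k)\}$ for the lower bound, subsequential limits of orthonormal eigenvectors via property ii) for the upper bound) are both sound. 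Two small points deserve attention if you write it out in full. First, in the upper bound you extract subsequences so that $\sigma_i(\varepsilon_n)\to\sigma_i^*$ before invoking property ii); boundedness of $\sigma_i(\varepsilon_n)$ is not assumed, but it follows from property ii) itself (apply it to the unit eigenvectors: $\sigma_i(\varepsilon_{n_k})=\|K_{\varepsilon_{n_k}}u_i^{n_k}\|_{\mathcal H_{\varepsilon_{n_k}}}\to\|z_i\|_{\mathcal H_0}$), so it is cleaner to apply ii) first and read off the limit of the eigenvalues from $\|z_i\|_{\mathcal H_0}$. Second, what you prove is convergence of each $\sigma_j(\varepsilon)$ together with subsequential convergence of eigenvectors; ``spectral convergence in the sense of \cite{vainikko}'' also comprises convergence of the spectral projections (equivalently, approximation of every eigenfunction of $K_0$ and preservation of multiplicities). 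This follows from what you have --- for an eigenvalue $\sigma_k(0)=\dots=\sigma_{k+m-1}(0)$ isolated from the rest, the limits of $u_k^n,\dots,u_{k+m-1}^n$ form an orthonormal basis of the corresponding eigenspace of $K_0$ --- but it should be stated if you intend the full strength of the theorem rather than only the eigenvalue convergence that the paper actually uses; otherwise your closing suggestion to invoke \cite{vainikko} for the complete statement is the pragmatic choice and matches what the authors do.
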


Let $T_D:\mathcal{H}^2_{0,D}(\Omega)\rightarrow \mathcal{H}^2_{0,D}(\Omega)$ be defined by $T_D=B_D^{(-1)}\circ J_1$, where $B_D$ is the operator from $\mathcal{H}^2_{0,D}(\Omega)$ to its dual $(\mathcal{H}^2_{0,D}(\Omega))'$ given by
\begin{equation*}
B_D(v)[\varphi]=\mathcal Q_{\sigma}(v,\varphi)\,,\ \ \ \forall v,\varphi\in \mathcal{H}^2_{0,D}(\Omega),
\end{equation*}
and $J_1$ is defined in \eqref{J1}. By the Riesz Theorem it follows that $B_D$ is a surjective isometry. The operator $T_D$ is the resolvent operator associated with problem \eqref{weak_DBS} and plays the same role of $T_{\mu,D}$ defined in \eqref{Tm}. In fact, as in the proof of Theorem \ref{main-BSM} it is possible to show that $T_D$ admits an increasing sequence of non-zero eigenvalues $\left\{q_j\right\}_{j=1}^{\infty}$ bounded from above and converging to $0$. Moreover, a number $q\ne 0$ is an eigenvalue of $T_D$ if and only if $\eta=\frac{1}{q}$ is an eigenvalue of \eqref{weak_DBS}, with the same eigenfunctions. 

We have now a family of compact self-adjoint operators $T_{\mu,D}$ each defined on the Hilbert space $H^2(\Omega)$ endowed with the scalar product $\mathcal Q_{\mu,D}$, and the compact self-adjoint operator $T_D$ defined on $\mathcal{H}^2_{0,D}(\Omega)$ endowed with the scalar product $\mathcal Q_{\sigma}$.
We are ready to state and prove the following theorem.
\begin{thm}\label{thm-limit-eta}
The family of operators $\left\{T_{\mu,D}\right\}_{\mu\in(-\infty,0)}$ compactly converges to $T_D$ as $\mu\rightarrow-\infty$. In particular, 
\begin{equation}\label{limit_eta}
\lim_{\mu\rightarrow-\infty}\lambda_j(\mu)=\eta_j,
\end{equation}
for all $j\in\mathbb N$, where $\eta_j$ are the eigenvalues of \eqref{weak_DBS}.
\end{thm}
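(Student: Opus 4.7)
The plan is to verify the two conditions of Definition \ref{Ecompact} for $T_{\mu,D}$ and $T_D$ with $\varepsilon:=-1/\mu$, and then to invoke Theorem \ref{spectral_convergence}. I take $\mathcal H_0:=\mathcal H^2_{0,D}(\Omega)$ endowed with $\mathcal Q_\sigma$, $\mathcal H_\varepsilon:=H^2(\Omega)$ endowed with $\mathcal Q_{\mu,D}$, and let $E_\varepsilon$ be the inclusion. Since $\gamma_0(u)=0$ for $u\in\mathcal H^2_{0,D}(\Omega)$ one has $\mathcal Q_{\mu,D}(u,u)=\mathcal Q_\sigma(u,u)$, so $\|E_\varepsilon u\|_{\mathcal H_\varepsilon}=\|u\|_{\mathcal H_0}$ identically and the compatibility hypothesis is trivially satisfied. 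The key identity I will use throughout is
\begin{equation*}
\mathcal Q_{\mu,D}(v,v)=\mathcal Q_\sigma(v,v)+|\mu|\,\|\gamma_0 v\|^2_{L^2(\partial\Omega)},
\end{equation*}
combined with the fact, recalled in Section \ref{pre}, that both $(\mathcal Q_\sigma(v,v)+\|v\|^2_{L^2(\Omega)})^{1/2}$ and $(\|D^2 v\|^2_{L^2(\Omega)}+\|\gamma_0 v\|^2_{L^2(\partial\Omega)})^{1/2}$ are equivalent to the standard $H^2$-norm on $\Omega$, hence so is $(\mathcal Q_\sigma(v,v)+\|\gamma_0 v\|^2_{L^2(\partial\Omega)})^{1/2}$.

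For condition (i), given $u\in\mathcal H_0$ and $u_\mu\in H^2(\Omega)$ with $\|u_\mu-u\|_{\mu,D}\to 0$, the above identity yields strong $H^2$-convergence $u_\mu\to u$ and $|\mu|\|\gamma_0 u_\mu\|^2\to 0$. Setting $w_\mu:=T_{\mu,D}u_\mu$ and $w:=T_D u$, I first establish $H^2$-boundedness of $w_\mu$ by testing its weak formulation with $\varphi=w_\mu$ and using the trace inequality together with boundedness of $\gamma_1 u_\mu$ in $L^2(\partial\Omega)$. Expanding and using $\gamma_0 w=0$ I obtain
\begin{equation*}
\|w_\mu-w\|^2_{\mu,D}=(\gamma_1 u_\mu,\gamma_1(w_\mu-w))_{\partial\Omega}-\mathcal Q_\sigma(w,w_\mu-w),
\end{equation*}
and both terms tend to zero by compactness of $\gamma_1\colon H^2(\Omega)\to L^2(\partial\Omega)$ combined with the weak $H^2$-convergence of $w_\mu$. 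For condition (ii), if $\|u_\mu\|_{\mu,D}=1$ the same decomposition gives $H^2$-boundedness of $u_\mu$ and $\gamma_0 u_\mu\to 0$, so along a subsequence $u_\mu\rightharpoonup u^*\in\mathcal H^2_{0,D}(\Omega)$. An analogous argument shows $w_\mu=T_{\mu,D}u_\mu$ is bounded in $H^2$ and $\gamma_0 w_\mu\to 0$; passing to the limit in the weak formulation against $\varphi\in\mathcal H^2_{0,D}(\Omega)$ identifies the weak limit as $w^*=T_D u^*$.

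The main obstacle is upgrading weak $H^2$-convergence of $w_\mu$ to strong convergence in the variable norm $\|\cdot\|_{\mu,D}$, which requires showing that the penalty term $|\mu|\|\gamma_0 w_\mu\|^2$ vanishes in the limit despite the growing coefficient. To handle this I exploit the self-testing identity
\begin{equation*}
\mathcal Q_\sigma(w_\mu,w_\mu)+|\mu|\,\|\gamma_0 w_\mu\|^2_{L^2(\partial\Omega)}=(\gamma_1 u_\mu,\gamma_1 w_\mu)_{\partial\Omega}.
\end{equation*}
By trace compactness the right-hand side converges to $(\gamma_1 u^*,\gamma_1 w^*)_{\partial\Omega}=\mathcal Q_\sigma(w^*,w^*)$, while weak lower semicontinuity yields $\liminf\mathcal Q_\sigma(w_\mu,w_\mu)\geq \mathcal Q_\sigma(w^*,w^*)$; non-negativity of the boundary term then forces both $\mathcal Q_\sigma(w_\mu,w_\mu)\to \mathcal Q_\sigma(w^*,w^*)$ and $|\mu|\|\gamma_0 w_\mu\|^2\to 0$, from which strong $H^2$-convergence of $w_\mu$ to $w^*$ follows, and hence $\|w_\mu-w^*\|_{\mu,D}\to 0$. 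Once compact convergence is established, Theorem \ref{spectral_convergence} applied to the positive parts of $T_{\mu,D}$ and $T_D$ produces $1/\lambda_j(\mu)\to 1/\eta_j$ for every $j$, which is exactly \eqref{limit_eta}.
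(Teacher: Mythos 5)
Your proposal is correct and follows essentially the same route as the paper: the same variable-space framework with the inclusion map, boundedness of $T_{\mu,D}(u_\mu)$ by self-testing, vanishing of $\gamma_0$ of the weak limit from the boundedness of the penalty term, identification of the limit as $T_D$ applied to the limit datum via test functions in $\mathcal H^2_{0,D}(\Omega)$, and strong convergence in the $\mu$-dependent norm from the energy identity $\mathcal Q_{\mu,D}(w_\mu,w_\mu)=(\gamma_1 u_\mu,\gamma_1 w_\mu)_{\partial\Omega}$ together with trace compactness, followed by Theorem \ref{spectral_convergence}. The only difference is cosmetic: you close the argument with a $\liminf$--$\limsup$ squeeze using weak lower semicontinuity of $\mathcal Q_\sigma$, whereas the paper expands the square $\mathcal Q_{\mu,D}(T_{\mu,D}(u_\mu)-T_D(u),\cdot)$ and computes each term's limit directly.
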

\begin{proof}
For each $\mu\in(-\infty,0)$ we define the map $E_{\mu}\equiv E:\mathcal{H}^2_{0,D}(\Omega)\rightarrow H^2(\Omega)$ simply by setting $E(u)=u$, for all $u\in\mathcal H^2_{0,D}(\Omega)$.

In view of Definition \ref{Ecompact}, we have to prove that
\begin{enumerate}[i)]
\item if $\left\{u_{\mu}\right\}_{\mu<0}\subset H^2(\Omega)$ and $u\in \mathcal{H}^2_{0,D}(\Omega)$ are such that $\mathcal Q_{\mu,D}(u_{\mu}-u,u_{\mu}-u)\rightarrow 0$ as $\mu\rightarrow-\infty$, then
$$
\mathcal Q_{\mu,D}(T_{\mu,D}(u_{\mu})-T_D(u),T_{\mu,D}(u_{\mu})-T_D(u))\rightarrow 0\,,{\rm\ \ \ as\ }\mu\rightarrow-\infty;
$$
\item if  $\left\{u_{\mu}\right\}_{\mu<0}\subset H^2(\Omega)$  is such that $\mathcal Q_{\mu,D}(u_{\mu},u_{\mu})=1$ for all $\mu<0$, then for every sequence $\mu_n\rightarrow-\infty$ there exists a sub-sequence $\mu_{n_k}\rightarrow-\infty$ and $v\in \mathcal H^2_{0,D}(\Omega)$ such that 
\begin{equation}\label{22}
\mathcal Q_{{\mu_{n_k}},D}(T_{{\mu_{n_k}},D}(u_{{\mu_{n_k}}})-v,T_{{\mu_{n_k}},D}(u_{{\mu_{n_k}}})-v)\rightarrow 0\,,{\rm\ \ \ as\ }k\rightarrow+\infty.
\end{equation}
\end{enumerate}

We start by proving i). By the assumptions in i), it follows that $u_{\mu}$ is uniformly bounded in $H^2(\Omega)$ for $\mu$ in a neighborhood of $-\infty$. Indeed, by definition 
\begin{equation}\label{compact1}
\mathcal Q_{\mu,D}(T_{\mu,D}(u_{\mu}),\varphi)=\int_{\partial\Omega}\frac{\partial u_{\mu}}{\partial\nu}\frac{\partial\varphi}{\partial\nu}d\sigma\,,\ \ \ \forall\varphi\in H^2(\Omega),
\end{equation}
hence, by choosing $\varphi=T_{\mu,D}(u_{\mu})$, we find that the family $\left\{T_{\mu,D}(u_{\mu})\right\}_{\mu<0}$ is bounded in $H^2(\Omega)$. Thus, possibly passing to a sub-sequence, $T_{\mu,D}(u_{\mu})\rightharpoonup v$ in $H^2(\Omega)$, and $\gamma_0(T_{\mu,D}(u_{\mu}))\rightarrow\gamma_0(v)$ in $L^2(\partial\Omega)$, as $\mu\rightarrow-\infty$, which implies that $\gamma_0(v)=0$ since the term $-\mu\int_{\partial\Omega}T_{\mu,D}(u_{\mu})^2d\sigma$ is bounded in $\mu$. Thus $v\in \mathcal{H}^2_{0,D}(\Omega)$.\\
Choosing $\varphi\in \mathcal H^2_{0,D}(\Omega)$ and passing to the limit in \eqref{compact1} we have that
$$
\mathcal Q_{\sigma}(v,\varphi)=\int_{\partial\Omega}\frac{\partial u}{\partial\nu}\frac{\partial\varphi}{\partial\nu}d\sigma\,,\ \ \ \forall\varphi\in \mathcal{H}^2_{0,D}(\Omega),
$$
hence $v=T_D(u)$. Thus $T_{\mu,D}(u_{\mu})\rightharpoonup T_D(u)$ in $H^2(\Omega)$. Moreover, the convergence is stronger because
\begin{multline*}
\lim_{\mu\rightarrow-\infty}\mathcal Q_{\mu,D}(T_{\mu,D}(u_{\mu})-T_D(u),T_{\mu,D}(u_{\mu})-T_D(u))\\
=\lim_{\mu\rightarrow-\infty}\left(\mathcal Q_{\mu,D}(T_{\mu,D}(u_{\mu}),T_{\mu,D}(u_{\mu}))-2\mathcal Q_{\mu,D}(T_{\mu,D}(u_{\mu}),T_D(u))+\mathcal Q_{\mu,D}(T_D(u),T_D(u))\right)\\
=\mathcal Q_{\sigma}(T_D(u),T_D(u))-2\mathcal Q_{\sigma}(T_D(u),T_D(u))+\mathcal Q_{\sigma}(T_D(u),T_D(u))=0,
\end{multline*}
which proves point i).

Note that the equality $\lim_{\mu\rightarrow-\infty}\mathcal Q_{\mu,D}(T_{\mu,D}(u_{\mu}),T_{\mu,D}(u_{\mu}))=\mathcal Q_{\sigma}(T_D(u),T_D(u))$ is a consequence of
\begin{multline*}
\lim_{\mu\rightarrow-\infty}\mathcal Q_{\mu,D}(T_{\mu,D}(u_{\mu}),T_{\mu,D}(u_{\mu}))=\lim_{\mu\rightarrow-\infty}\int_{\partial\Omega}\frac{\partial u_{\mu}}{\partial\nu}\frac{\partial T_{\mu,D}(u_{\mu})}{\partial\nu}d\sigma\\
=\int_{\partial\Omega}\frac{\partial u}{\partial\nu}\frac{\partial T_D(u)}{\partial\nu}d\sigma=\mathcal Q_{\sigma}(T_D(u),T_D(u)).
\end{multline*}
The proof of point ii) is similar. If $\mathcal Q_{\mu,D}(u_{\mu},u_{\mu})=1$, up to sub-sequences $u_{\mu}\rightharpoonup u\in H^2(\Omega)$, $\gamma_0(u_{\mu})\rightarrow\gamma_0(u)$, and $\gamma_1(u_{\mu})\rightarrow\gamma_1(u)$ as $\mu\rightarrow -\infty$. Moreover, $\|\gamma_0(u_{\mu})\|_{L^2(\partial\Omega)}^2\leq-\frac{1}{\mu}$, hence $\|\gamma_0(u_{\mu})\|_{L^2(\partial\Omega)}^2\rightarrow 0$ as $\mu\rightarrow -\infty$. This implies that $\gamma_0(u)=0$ and that $u\in \mathcal{H}^2_{0,D}(\Omega)$. Then it is possible to repeat the same arguments above to conclude the validity of \eqref{22} with $v=T_D(u)$.

Thus $T_{\mu,D}$ compactly converges to $T_D$ and \eqref{limit_eta} follows by Theorem \ref{spectral_convergence}.

\end{proof}

\begin{rem}
We also note that each eigenvalue $\lambda_j(\mu)$ is non-increasing with respect to $\mu$, for $\mu\in(-\infty,0)$. In fact from the Min-Max Principle \eqref{minmaxM} it immediately follows that for all $j\in\mathbb N$, $\lambda_j(\mu_1)\leq\lambda_j(\mu_2)$ if $\mu_1>\mu_2$. 
\end{rem}

Now we consider the behavior of the first eigenvalue as $\mu\rightarrow 0^-$.

\begin{lem}\label{limit_mu_0}
We have
\begin{equation*}
\lim_{\mu\rightarrow 0^-}\lambda_1(\mu)=0
\end{equation*}
\end{lem}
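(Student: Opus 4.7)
The strategy is to produce an upper bound on $\lambda_1(\mu)$ via the min-max formula \eqref{minmaxM} by choosing a cheap explicit test function, and then combine it with the positivity $\lambda_1(\mu)>0$ already established in Theorem \ref{main-BSM}.

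The key observation is that, for $\sigma\in\bigl(-\tfrac{1}{N-1},1\bigr)$, the quadratic form $\mathcal Q_\sigma(v,v)$ involves only second derivatives of $v$, so it vanishes identically on affine functions. Concretely, fix an index $i\in\{1,\dots,N\}$ and consider the test function $v(x)=x_i$. Since $\sum_{i=1}^N\int_{\partial\Omega}\nu_i^2\,d\sigma=|\partial\Omega|>0$, we may choose $i$ so that $\int_{\partial\Omega}\nu_i^2\,d\sigma>0$, which guarantees $\gamma_1(v)=\nu_i\not\equiv 0$ and hence $v$ is admissible in \eqref{minmaxM} with $V=\mathrm{span}\{v\}$. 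Since $D^2v\equiv 0$, we have $\mathcal Q_\sigma(v,v)=0$, and therefore
\begin{equation*}
\mathcal{Q}_{\mu,D}(v,v)=-\mu\int_{\partial\Omega}x_i^2\,d\sigma.
\end{equation*}

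Applying \eqref{minmaxM} with this one-dimensional $V$ yields
\begin{equation*}
0<\lambda_1(\mu)\;\le\;\frac{\mathcal{Q}_{\mu,D}(v,v)}{\int_{\partial\Omega}\left(\frac{\partial v}{\partial\nu}\right)^2 d\sigma}\;=\;|\mu|\,\frac{\int_{\partial\Omega}x_i^2\,d\sigma}{\int_{\partial\Omega}\nu_i^2\,d\sigma},
\end{equation*}
where the strict lower bound comes from the observation (as in the proof of Theorem \ref{main-BSM}) that for $\mu<0$ the form $\mathcal{Q}_{\mu,D}$ is positive definite: $\mathcal{Q}_{\mu,D}(v,v)=0$ forces both $\mathcal Q_\sigma(v,v)=0$ and $\gamma_0(v)=0$, so $v$ is affine with vanishing trace and hence $v\equiv 0$. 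Letting $\mu\to 0^-$ the right-hand side tends to $0$, completing the proof.

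The argument is essentially a one-line test-function computation; no genuine obstacle arises. The only point one must be careful about is choosing the direction $i$ so that the denominator $\int_{\partial\Omega}\nu_i^2\,d\sigma$ is strictly positive, which is immediate from $|\nu|=1$ a.e.\ on $\partial\Omega$. Note that the same reasoning also shows that $\lambda_1$ is uniformly small on a neighborhood of $0^-$, consistent with the Lipschitz continuity statement of Theorem \ref{Lipschitz-continuity-M} away from $\mu=0$.
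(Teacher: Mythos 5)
Your proof is correct and follows essentially the same route as the paper: the paper also plugs an affine test function (there written $p\cdot x$ for a fixed $p\in\mathbb R^N$, your $x_i$ being the case $p=e_i$) into the min-max formula \eqref{minmaxM}, obtains $0<\lambda_1(\mu)\le -\mu\,\int_{\partial\Omega}(p\cdot x)^2\,d\sigma\,/\int_{\partial\Omega}(p\cdot\nu)^2\,d\sigma$, and lets $\mu\to 0^-$. Your extra care in choosing $i$ so that $\int_{\partial\Omega}\nu_i^2\,d\sigma>0$ is a welcome (minor) refinement of the same argument.
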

\begin{proof}
Let $p\in\mathbb R^N$ be fixed. From \eqref{minmaxM} we get 
\begin{equation*}
0<\lambda_1(\mu)=\min_{\substack{v\in H^2(\Omega)\\v\ne 0}}\frac{\mathcal{Q}_{\mu,D}(v,v)}{\int_{\partial\Omega}\left(\frac{\partial v}{\partial\nu}\right)^2d\sigma}\leq \frac{\mathcal{Q}_{\mu,D}(p\cdot x,p\cdot x)}{\int_{\partial\Omega}(p\cdot\nu)^2d\sigma}=-\mu\frac{\int_{\partial\Omega}(p\cdot x)^2d\sigma}{\int_{\partial\Omega}(p\cdot\nu)^2d\sigma},
\end{equation*}
for all $\mu\in(-\infty,0)$. By letting $\mu\rightarrow 0^-$ we obtain the result.
\end{proof}




\subsection{The $BS_{\lambda}$ eigenvalue problem}\label{sub_BSL}
The weak formulation of problem \eqref{BSL} reads
\begin{equation}\label{weak_BSL}
\int_{\Omega}(1-\sigma)D^2 u:D^2\varphi +\sigma\Delta u\Delta\varphi dx-\lambda\int_{\partial\Omega}\frac{\partial u}{\partial\nu}\frac{\partial\varphi}{\partial\nu}d\sigma=\mu(\lambda)\int_{\partial\Omega}u\varphi d\sigma\,,\ \ \ \forall\varphi\in H^2(\Omega),
\end{equation}
in the unknowns $u\in H^2(\Omega)$, $\mu(\lambda)\in\mathbb R$, and can be re-written as
\begin{equation*}
\mathcal{Q}_{\lambda,N}(u,\varphi)=\mu(\lambda)\left(\gamma_0(u),\gamma_0(\varphi)\right)_{\partial\Omega}\,,\ \ \ \forall\varphi\in H^2(\Omega).
\end{equation*}
 We prove that for all $\lambda<\eta_1$, where $\eta_1$ is the first eigenvalue of \eqref{DBS}, problem \eqref{BSL} admits an increasing sequence of eigenvalues of finite multiplicity diverging to $+\infty$ and the corresponding eigenfunctions form a basis of $H^2_{\lambda,N}(\Omega)$, where $H^2_{\lambda,N}(\Omega)$ denotes the orthogonal complement of $\mathcal{H}^2_{0,D}(\Omega)$ in $H^2(\Omega)$ with respect to $\mathcal{Q}_{\lambda,N}$, that is
\begin{equation}\label{HL}
H^2_{\lambda,N}(\Omega)=\left\{u\in H^2(\Omega):\mathcal{Q}_{\lambda,N}(u,\varphi)=0\,,\ \forall\varphi\in \mathcal{H}^2_{0,D}(\Omega)\right\}.
\end{equation}
Since in general $\mathcal Q_{\lambda,N}$ is not a scalar product, we find it convenient to consider on $H^2(\Omega)$ the norm
\begin{equation}\label{norm_L}
\|u\|_{\lambda,N}^2=\mathcal{Q}_{\lambda,N}(u,u)+b\|\gamma_0(u)\|_{L^2(\Omega)}^2,
\end{equation}
where $b>0$ is a fixed number which is chosen as follows. If $\lambda<0$, no restrictions are required on $b>0$, since the norm $\|\cdot\|_{\lambda,N}$ is equivalent to the standard norm of $H^2(\Omega)$ for all $b>0$. Assume now that $0\leq\lambda<\eta_1$. From Theorem \ref{thm-limit-eta} and Lemma \ref{limit_mu_0} we have that  $(0,\eta_1)\subseteq\lambda_1((-\infty,0))$,
hence there exists $\mu\in(-\infty,0)$ and $\varepsilon\in(0,1)$ such that $\lambda_1(\mu)=\frac{\lambda+\varepsilon}{1-\varepsilon}<\eta_1$. Then
\begin{multline}\label{constraint_0}
\mathcal Q_{\lambda,N}(u,u)=\varepsilon\mathcal Q_{-1,N}(u,u)+(1-\varepsilon)\mathcal Q_{\lambda_1(\mu),N}(u,u)\\
\geq \varepsilon\mathcal Q_{-1,N}(u,u)+(1-\varepsilon)\mu\|\gamma_0(u)\|^2_{L^2(\partial\Omega)}.
\end{multline}
Thus, by choosing any $b$ satisfying 
\begin{equation}\label{constraint-b}
b>-(1-\varepsilon)\mu,
\end{equation}
it follows by \eqref{constraint_0} and \eqref{constraint-b} that $\|\cdot\|_{\lambda,N}$ is a norm equivalent to the standard norm of $H^2(\Omega)$.

The norm $\|\cdot\|_{\lambda,N}$ is associated with the scalar product defined by 
\begin{equation}\label{scalar_L}
\langle u,\varphi\rangle_{\lambda,N}=\mathcal{Q}_{\lambda,N}(u,\varphi)+b(\gamma_0(u),\gamma_0(\varphi))_{\partial\Omega},
\end{equation}
for all $u,\varphi\in H^2(\Omega)$.

We now recast problem \eqref{weak_BSL} in the form of an eigenvalue problem for a compact self-adjoint operator acting on a Hilbert space. To do so, we define the operator $B_{\lambda,N}$ from $H^2(\Omega)$ to its dual $(H^2(\Omega))'$ by setting
$$
B_{\lambda,N}(u)[\varphi]=\langle u,\varphi\rangle_{\lambda,N}\,,\ \ \ \forall u,\varphi\in H^2(\Omega).
$$
By the Riesz Theorem it follows that $B_{\lambda,N}$ is a surjective isometry. Then we consider the operator $J_0$ from $H^2(\Omega)$ to $(H^2(\Omega))'$ defined by
\begin{equation}\label{J0}
J_0(u)[\varphi]=(\gamma_0(u),\gamma_0(\varphi))_{\partial\Omega}\,,\ \ \ \forall u,\varphi\in H^2(\Omega).
\end{equation}
The operator $J_0$ is compact since $\gamma_0$ is a compact operator from $H^2(\Omega)$ to $L^2(\partial\Omega)$. Finally, we set 
\begin{equation}\label{Tl}
T_{\lambda,N}=B_{\lambda,N}^{(-1)}\circ J_0.
\end{equation}
From the compactness of $J_0$ and the boundedness of $B_{\lambda,N}$ it follows that $T_{\lambda,N}$ is a compact operator from $H^2(\Omega)$ to itself. Moreover, $
\langle B_{\lambda,N}(u),\varphi\rangle_{\lambda,N}=(\gamma_0(u),\gamma_0(\varphi))_{\partial\Omega}$, for all $u,\varphi\in H^2(\Omega)$, hence $T_{\lambda,N}$ is self-adjoint. 

Note that ${\rm Ker}\,T_{\lambda,N}={\rm Ker}\,J_0=\mathcal{H}^2_{0,D}(\Omega)$ and the non-zero eigenvalues of $T_{\lambda,N}$ coincide with the reciprocals of the eigenvalues of \eqref{weak_BSL} shifted by $b$, the eigenfunctions being the same.

We are now ready to prove the following theorem.


\begin{thm}\label{main-BSL}
Let $\Omega$ be a bounded domain in $\mathbb R^N$ of class $C^{0,1}$, $\sigma\in\big(-\frac{1}{N-1},1\big)$, and $\lambda<\eta_1$. Then the eigenvalues of \eqref{weak_BSL} have finite multiplicity and are given by a non-decreasing sequence of real numbers $\left\{\mu_j(\lambda)\right\}_{j=1}^{\infty}$ defined by
\begin{equation}\label{minmaxL}
\mu_j(\lambda)=\min_{\substack{U\subset H^2(\Omega)\\{\rm dim}U=j}}\max_{\substack{u\in U\\\gamma_0(u)\ne 0}}\frac{\mathcal{Q}_{\lambda,N}(u,u)}{\int_{\partial\Omega}u^2d\sigma},
\end{equation}
where each eigenvalue is repeated according to its multiplicity. Moreover, there exists a Hilbert basis $\left\{u_{j,\lambda}\right\}_{j=1}^{\infty}$ of $H^2_{\lambda,N}(\Omega)$ (endowed with the scalar product \eqref{scalar_L}) of eigenfunctions $u_{j,\lambda}$ associated with the eigenvalues $\mu_j(\lambda)$ and the following statements hold:
\begin{enumerate}[i)]
\item If $\lambda<0$ then $\mu_1(\lambda)=0$ is an eigenvalue of multiplicity one and the corresponding eigenfunctions are the constant functions. Moreover, if $\tilde u_{j,\lambda}$ denote the normalizations of $u_{j,\lambda}$ with respect to $\mathcal{Q}_{\lambda,N}$ for all $j\geq 2$, the functions $\hat u_{j,\lambda}:=\sqrt{\mu_{j}(\lambda)}\gamma_0(\tilde u_{j,\lambda})$, $j\geq 2$, and $\hat u_{1,\lambda}:=|\partial\Omega|^{-1/2}$ define a Hilbert basis of $L^2(\partial\Omega)$ with respect to its standard scalar product.
\item If $0\leq\lambda<\eta_1$, then $\mu(\lambda)=0$ is an eigenvalue. Moreover, if $\mu_{j_0}(\lambda)$ is the first positive eigenvalue, and $\tilde u_{j,\lambda}$ denote the normalizations of $u_{j,\lambda}$ with respect to $\mathcal{Q}_{\lambda,N}$ for all $j\geq j_0$, and $\left\{\hat u_{j,\lambda}\right\}_{j=1}^{j_0-1}$ denotes a orthonormal basis with respect to the $L^2(\partial\Omega)$ scalar product of the eigenspace associated to $\mu_1(\lambda),...,\mu_{j_0-1}(\lambda)$  restricted to $\partial\Omega$, then the functions $\hat u_{j,\lambda}:=\sqrt{\mu_{j}(\lambda)}\gamma_0(\tilde u_{j,\lambda})$, $j\geq j_0$, and $\left\{\hat u_{j,0}\right\}_{j=1}^{j_0-1}$, define a Hilbert basis of $L^2(\partial\Omega)$ with respect to its standard scalar product. Finally, if $\lambda=0$, then $j_0=N+2$ and  the eigenspace corresponding to $\mu_1(0)=\cdots=\mu_{N+1}(0)=0$ is generated by $\left\{1,x_1,...,x_N\right\}$; if $\lambda>0$, then $\mu_1(\lambda)<0$.

\end{enumerate}
\end{thm}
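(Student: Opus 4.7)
The plan is to mimic the proof of Theorem \ref{main-BSM}: recast \eqref{weak_BSL} as an eigenvalue problem for the compact self-adjoint operator $T_{\lambda,N}$ of \eqref{Tl}, apply the Hilbert--Schmidt theorem to produce the sequence $\{\mu_j(\lambda)\}$ and the Hilbert basis $\{u_{j,\lambda}\}$ of $(\mathrm{Ker}\,T_{\lambda,N})^{\perp}=H^2_{\lambda,N}(\Omega)$, and then transplant the result to $L^2(\partial\Omega)$ by means of the auxiliary compact self-adjoint operator $T'_{\lambda,N}=\gamma_0\circ B_{\lambda,N}^{(-1)}\circ J_0'$, with $J_0'(f)[\varphi]:=(f,\gamma_0(\varphi))_{\partial\Omega}$. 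The shift by $b$ built into \eqref{norm_L} guarantees that each nonzero eigenvalue $q_j$ of $T_{\lambda,N}$ is strictly positive and is related to an eigenvalue of \eqref{weak_BSL} by $q_j=(\mu_j(\lambda)+b)^{-1}$; inverting the standard min--max for $T_{\lambda,N}$ and subtracting $b$ yields \eqref{minmaxL}.

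Next I would locate the zero eigenspace and settle the sign of $\mu_1(\lambda)$. The constants always lie in the kernel, since $D^2 1=0$ and $\gamma_1(1)=0$ give $\mathcal{Q}_{\lambda,N}(1,\varphi)=0$ for every $\varphi\in H^2(\Omega)$. Conversely, $\mu_j(\lambda)=0$ forces $\mathcal{Q}_{\lambda,N}(u_{j,\lambda},u_{j,\lambda})=0$. When $\lambda<0$ both nonnegative summands $\mathcal{Q}_\sigma(u,u)$ and $-\lambda\|\gamma_1(u)\|^2_{L^2(\partial\Omega)}$ must vanish; the pointwise inequality $(1-\sigma)|D^2u|^2+\sigma(\Delta u)^2\ge c_{\sigma,N}|D^2u|^2$, valid for $\sigma\in(-\tfrac{1}{N-1},1)$ because $(\Delta u)^2\le N|D^2u|^2$, forces $D^2u=0$, hence $u$ is affine, and the integration-by-parts identity $\int_{\partial\Omega}(p\cdot x)(p\cdot\nu)\,d\sigma=|p|^2|\Omega|$ combined with $p\cdot\nu\equiv 0$ then gives $p=0$. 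When $\lambda=0$ the same integrand argument identifies the kernel with the $(N{+}1)$-dimensional space of affine functions, so $j_0=N+2$ with eigenspace $\mathrm{span}\{1,x_1,\ldots,x_N\}$. When $\lambda>0$, plugging $v(x)=p\cdot x$ into \eqref{minmaxL} (which satisfies $\mathcal{Q}_\sigma(v,v)=0$ and, for a suitable $p$, $\gamma_1(v)=p\cdot\nu\not\equiv 0$ because $\Omega$ is bounded) returns a negative value, so $\mu_1(\lambda)<0$.

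For the $L^2(\partial\Omega)$ basis I would use that $T'_{\lambda,N}$ shares the nonzero spectrum of $T_{\lambda,N}$ with eigenfunctions proportional to $\gamma_0(u_{j,\lambda})$, while $\mathrm{Ker}\,T'_{\lambda,N}=\{0\}$ by density of $\gamma_0(H^2(\Omega))$ in $L^2(\partial\Omega)$. For indices $j\ge j_0$ the identity $\mathcal{Q}_{\lambda,N}(u_{j,\lambda},u_{j,\lambda})=\mu_j(\lambda)\|\gamma_0(u_{j,\lambda})\|^2_{L^2(\partial\Omega)}$ shows that $\hat u_{j,\lambda}=\sqrt{\mu_j(\lambda)}\,\gamma_0(\tilde u_{j,\lambda})$ has unit $L^2(\partial\Omega)$-norm, and $L^2$-orthogonality across distinct positive eigenvalues is automatic by self-adjointness of $T'_{\lambda,N}$. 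For the finitely many indices $j<j_0$ I would instead orthonormalize the traces $\gamma_0(u_{j,\lambda})$ directly in $L^2(\partial\Omega)$; this is legitimate because $\gamma_0$ is injective on each of these eigenspaces, a consequence of $H^2_{\lambda,N}(\Omega)\cap\mathcal{H}^2_{0,D}(\Omega)=\{0\}$ which in turn follows from $\lambda<\eta_1$ together with the min--max \eqref{minmax_DBS}. The two families together yield the claimed Hilbert basis.

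The main technical obstacle is the indefiniteness of $\mathcal{Q}_{\lambda,N}$ when $\lambda\ge 0$: it forces the auxiliary shift by $b$ in the Hilbert-space setup and compels a separate treatment of the non-positive part of the spectrum when producing the $L^2(\partial\Omega)$ basis. Pinning down the multiplicity of the eigenvalue $0$, which is one, $N+1$, or zero according to whether $\lambda$ is negative, zero, or positive, is the most delicate point and hinges on the pointwise coercivity of $\mathcal{Q}_\sigma$ combined with the elementary integration-by-parts identity ruling out non-trivial affine solutions whose normal component vanishes on $\partial\Omega$.
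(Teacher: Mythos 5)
Your proposal follows the paper's own strategy almost verbatim: the shift by $b$ in \eqref{norm_L}, the Hilbert--Schmidt theorem applied to $T_{\lambda,N}$ with the correspondence $q=(\mu+b)^{-1}$, the min--max \eqref{minmaxL}, and the transplantation to $L^2(\partial\Omega)$ via $T'_{\lambda,N}=\gamma_0\circ B_{\lambda,N}^{(-1)}\circ J_0'$ are exactly the paper's steps, and your extra details (injectivity of $T'_{\lambda,N}$ from density of traces, injectivity of $\gamma_0$ on the low eigenspaces via $H^2_{\lambda,N}(\Omega)\cap\mathcal H^2_{0,D}(\Omega)=\{0\}$ and \eqref{minmax_DBS}) are correct and welcome. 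The one genuinely different step is the proof that $\mu_1(\lambda)<0$ for $0<\lambda<\eta_1$: the paper obtains it by invoking Theorem \ref{thm-limit-eta} and Lemma \ref{limit_mu_0} to find $\mu<0$ with $\lambda_1(\mu)=\lambda$ and then shows $\mu_1(\lambda)=\mu$, whereas you simply test \eqref{minmaxL} with $v=p\cdot x$, for which $\mathcal Q_\sigma(v,v)=0$, $\gamma_1(v)=p\cdot\nu\not\equiv0$ and $\gamma_0(v)\neq0$ (both guaranteed by $\int_{\partial\Omega}(p\cdot x)(p\cdot\nu)\,d\sigma=|p|^2|\Omega|>0$). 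Your argument is more elementary and self-contained (it does not need the limiting behaviour of $\lambda_1(\mu)$), at the price of yielding only the inequality $\mu_1(\lambda)<0$ rather than the identification $\mu_1(\lambda)=\mu$ with $\lambda_1(\mu)=\lambda$; since the theorem only claims the inequality, this is perfectly adequate.

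One slip to fix: in your closing paragraph you assert that the multiplicity of the eigenvalue $0$ is ``zero'' when $\lambda>0$. This contradicts both part ii) of the statement (``if $0\leq\lambda<\eta_1$, then $\mu(\lambda)=0$ is an eigenvalue'') and your own earlier, correct, observation that $\mathcal Q_{\lambda,N}(1,\varphi)=0$ for every $\varphi\in H^2(\Omega)$ and every $\lambda$: the constants remain eigenfunctions with eigenvalue $0$ for all $\lambda$, so for $\lambda>0$ the eigenvalue $0$ still occurs (with the constants in its eigenspace); what changes is only that it is no longer the first eigenvalue, since $\mu_1(\lambda)<0$. The body of your proof does not use the erroneous remark, so nothing else breaks, but the sentence should be corrected.
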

\begin{proof}

Since ${\rm Ker}\,J_0=\mathcal{H}^2_{0,D}(\Omega)$, by the Hilbert-Schmidt Theorem applied to $T_{\lambda,N}$ it follows that $T_{\lambda,N}$ admits a non-increasing sequence of positive eigenvalues $\left\{p_j\right\}_{j=1}^{\infty}$ bounded from above, converging to zero and a corresponding Hilbert basis $\left\{u_{j,\lambda}\right\}$ of eigenfunctions of $H^2_{\lambda,N}(\Omega)$. We note that $p\ne 0$ is an eigenvalue of $T_{\lambda,N}$ if and only if $\mu=\frac{1}{p}-b$ is an eigenvalue of \eqref{weak_BSL}, the eigenfunction being the same.

Formula \eqref{minmaxL} follows from the standard min-max formula for the eigenvalues of compact self-adjoint operators. 

If $\lambda<0$, then $\mu_1(\lambda)=0$ and a corresponding eigenfunction $u_{1,\lambda}$ satisfies $D^2u_{1,\lambda}=0$ in $\Omega$, hence it is a linear function; moreover, since $\frac{\partial u_{1,\lambda}}{\partial\nu}=0$ on $\partial\Omega$, $u_{1,\lambda}$ has to be constant. 

If $\lambda=0$, then $\mu=0$ is an eigenvalue and a corresponding eigenfunction is a linear function. Hence $\mu_1(0)=\cdots=\mu_{N+1}(0)=0$ and the associated eigenspace is spanned by $\left\{1,x_1,...,x_N\right\}$. 

If $0<\lambda<\eta_1$, then by \eqref{limit_eta} and Lemma \ref{limit_mu_0}, there exists $\mu<0$ such that $\lambda_1(\mu)=\lambda$, hence $\mu$ is an eigenvalue of \eqref{weak_BSL}.
Moreover, by definition we have that for all $u\in H^2(\Omega)$ with $\gamma_0(u)\ne 0$
$$
\frac{Q_{\lambda,N}(u,u)}{\int_{\partial\Omega}u^2d\sigma}=\frac{Q_{\lambda_1(\mu),N}(u,u)}{\int_{\partial\Omega}u^2d\sigma}\geq \mu,
$$
hence $\mu_1(\lambda)=\mu<0$.

To prove the final part of the theorem, we recast problem \eqref{weak_BSL} into an eigenvalue problem for the compact self-adjoint operator $T_{\lambda,N}'=\gamma_0\circ B_{\lambda,N}^{(-1)}\circ J_0'$, where $J_0'$ denotes the map from $L^2(\partial\Omega)$ to the dual of $H^2(\Omega)$ defined by
$$
J_0'(u)[\varphi]=(u,\gamma_0(\varphi))_{\partial\Omega}\,,\ \ \ \forall u\in L^2(\partial\Omega),\varphi\in H^2(\Omega).
$$
We apply again the Hilbert-Schmidt Theorem and observe that $T_{\lambda,N}$ and $T_{\lambda,N}'$ admit the same non-zero eigenvalues and that the eigenfunctions of $T_{\lambda,N}'$ are exactly the traces of the eigenfunctions of $T_{\lambda,N}$. From \eqref{weak_BSL} we deduce that if we normalize the eigenfunction $u_{j,\lambda}$ of $T_{\lambda,N}$ associated with positive eigenvalues and we denote them by $\tilde u_{j,\lambda}$, then the normalization of their traces in $L^2(\partial\Omega)$ are obtained by multiplying $\gamma_0(\tilde u_{j,\lambda})$ by $\sqrt{\mu_j(\lambda)}$. The rest of the proof easily follows.
\end{proof}

As we have done for problem \eqref{weak_BSM}, we present now a few results on the behavior of the eigenvalues of \eqref{weak_BSL}  for $\lambda\in(-\infty,\eta_1)$. We have the following theorem on the Lipschitz continuity of eigenvalues, the proof of which is similar to that of Theorem \ref{Lipschitz-continuity-M} and is accordingly omitted.

\begin{thm}
For any $j\in\mathbb N$ and $\delta>0$, the functions $\mu_j:(-\infty,\eta_1-\delta]\rightarrow (0,+\infty)$ which takes $\lambda\in(-\infty,\eta_1-\delta]$ to $\mu_j(\lambda)\in\mathbb R$ are Lipschitz continuous on $(-\infty,\eta_1-\delta]$.
\end{thm}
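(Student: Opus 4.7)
The approach mirrors the proof of Theorem~\ref{Lipschitz-continuity-M}: I would derive a quantitative comparison of the Rayleigh quotients attached to $\mathcal Q_{\lambda_1,N}$ and $\mathcal Q_{\lambda_2,N}$ and then conclude through \eqref{minmaxL}. The decisive difference is that where Theorem~\ref{Lipschitz-continuity-M} exploits $\mathcal Q_\sigma\geq 0$ to control $\|\gamma_0(v)\|_{L^2(\partial\Omega)}^2$ by $\mathcal Q_{\mu,D}(v,v)$ essentially for free, here one must dominate $\|\gamma_1(u)\|_{L^2(\partial\Omega)}^2$ by $\mathcal Q_{\lambda,N}(u,u)$ (plus a boundary term), and this requires the strict inequality $\lambda\leq \eta_1-\delta$.

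Fix $\delta>0$ and $\lambda_1<\lambda_2\leq \eta_1-\delta$. For any $u\in H^2(\Omega)$ with $\gamma_0(u)\neq 0$ the elementary identity
\begin{equation*}
\frac{\mathcal Q_{\lambda_1,N}(u,u)}{\|\gamma_0(u)\|_{L^2(\partial\Omega)}^2}-\frac{\mathcal Q_{\lambda_2,N}(u,u)}{\|\gamma_0(u)\|_{L^2(\partial\Omega)}^2}=(\lambda_2-\lambda_1)\frac{\|\gamma_1(u)\|_{L^2(\partial\Omega)}^2}{\|\gamma_0(u)\|_{L^2(\partial\Omega)}^2}
\end{equation*}
is non-negative, which via \eqref{minmaxL} already yields the monotonicity $\mu_j(\lambda_1)\geq \mu_j(\lambda_2)$.

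The heart of the proof is a uniform trace bound. Using the construction in \eqref{constraint_0}--\eqref{constraint-b}, together with Theorem~\ref{thm-limit-eta} (which produces some $\mu_\delta<0$ with $\lambda_1(\mu_\delta)>\eta_1-\delta$), I would select $\varepsilon_\delta\in(0,1)$ and $b_\delta>0$ \emph{depending only on $\delta$} such that for every $\lambda\leq \eta_1-\delta$ the norm $\bigl(\mathcal Q_{\lambda,N}(u,u)+b_\delta\|\gamma_0(u)\|_{L^2(\partial\Omega)}^2\bigr)^{1/2}$ is equivalent to $\|u\|_{H^2(\Omega)}$ with constants independent of $\lambda$. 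Combining this with the continuity of $\gamma_1:H^2(\Omega)\to L^2(\partial\Omega)$ produces a $C_\delta>0$ with
\begin{equation*}
\|\gamma_1(u)\|_{L^2(\partial\Omega)}^2\leq C_\delta\bigl(\mathcal Q_{\lambda,N}(u,u)+b_\delta\|\gamma_0(u)\|_{L^2(\partial\Omega)}^2\bigr),
\end{equation*}
valid for all $u\in H^2(\Omega)$ and all $\lambda\leq \eta_1-\delta$.

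Plugging this estimate with $\lambda=\lambda_2$ into the identity of the previous paragraph, taking the maximum over $u$ in a $j$-dimensional subspace $U\subset H^2(\Omega)$ (the right-hand side being increasing in the Rayleigh quotient), then the infimum over such $U$, one obtains via \eqref{minmaxL}
\begin{equation*}
0\leq \mu_j(\lambda_1)-\mu_j(\lambda_2)\leq (\lambda_2-\lambda_1)\,C_\delta\bigl(\mu_j(\lambda_2)+b_\delta\bigr).
\end{equation*}
The monotonicity of $\mu_j$ and the limit $\mu_j(\lambda)\to \xi_j$ as $\lambda\to -\infty$ (proved by an argument fully analogous to Theorem~\ref{thm-limit-eta}) bound $\mu_j(\lambda_2)$ uniformly on $(-\infty,\eta_1-\delta]$, so the Lipschitz constant depends only on $j$ and $\delta$. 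I expect the main obstacle to be the uniform norm equivalence of the middle paragraph, since this is where the hypothesis $\lambda<\eta_1$ is unavoidable: at $\lambda=\eta_1$ the form $\mathcal Q_{\lambda,N}$ degenerates on the first eigenspace of \eqref{weak_DBS}, and no bound of the required type can hold.
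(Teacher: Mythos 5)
Your proof is correct and follows exactly the route the paper intends (the paper omits this proof, declaring it ``similar'' to that of Theorem \ref{Lipschitz-continuity-M}): compare the Rayleigh quotients of $\mathcal Q_{\lambda_1,N}$ and $\mathcal Q_{\lambda_2,N}$, dominate the boundary term $\|\gamma_1(u)\|_{L^2(\partial\Omega)}^2$ through the coercivity of $\mathcal Q_{\lambda,N}(\cdot,\cdot)+b_\delta(\gamma_0(\cdot),\gamma_0(\cdot))_{\partial\Omega}$ uniformly for $\lambda\le\eta_1-\delta$ (this is the genuinely new ingredient compared with the $\mu$-case, where $\mathcal Q_\sigma\ge 0$ suffices), and conclude via \eqref{minmaxL}, monotonicity, and the uniform bound $\mu_j(\lambda)\le\xi_j$. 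One harmless overstatement: a two-sided equivalence of $\bigl(\mathcal Q_{\lambda,N}(u,u)+b_\delta\|\gamma_0(u)\|_{L^2(\partial\Omega)}^2\bigr)^{1/2}$ with $\|u\|_{H^2(\Omega)}$ cannot have constants independent of $\lambda$ as $\lambda\to-\infty$ (the term $-\lambda\|\gamma_1(u)\|_{L^2(\partial\Omega)}^2$ blows up), but your argument only uses the coercive lower bound, which is indeed uniform since $\mathcal Q_{\lambda,N}(u,u)\ge\mathcal Q_{\eta_1-\delta,N}(u,u)$ for all $\lambda\le\eta_1-\delta$.
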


We now investigate the behavior of the eigenvalues $\mu_j(\lambda)$ as $\lambda\rightarrow-\infty$. In order state the analogue of Theorem \ref{thm-limit-eta}, we consider the operator $T_N:\mathcal{H}^2_{0,N}(\Omega)\rightarrow \mathcal{H}^2_{0,N}(\Omega)$ defined by $T_N=B_N^{(-1)}\circ J_0$, where $B_N$ is the operator from $\mathcal{H}^2_{0,N}(\Omega)$ to its dual $(\mathcal{H}^2_{0,N}(\Omega))'$ given by
\begin{equation}\label{tB}
B_N(v)[\varphi]=\mathcal Q_{\sigma}(v,\varphi)+b(\gamma_0(v),\gamma_0(\varphi))_{\partial\Omega}\,,\ \ \ \forall v,\varphi\in \mathcal{H}^2_{0,N}(\Omega),
\end{equation}
and $b$ has the same value as in the definition of the operator $T_{\lambda,N}$, see \eqref{constraint-b}, and $J_0$ is defined in \eqref{J0}. Note that the constant $b$ can be chosen to be independent of $\lambda$ for $\lambda<0$. By the Riesz Theorem it follows that $B_N$ is a surjective isometry. The operator $T_N$ is the resolvent operator associated with problem \eqref{weak_NBS} and plays the same role of $T_{\lambda,N}$ defined in \eqref{Tl}. In fact, as in the proof of Theorem \ref{main-BSL} it is possible to show that $T_N$ admits an increasing sequence of non-zero eigenvalues $\left\{p_j\right\}_{j=1}^{\infty}$ bounded from above and converging to $0$. Moreover, a number $p\ne 0$ is an eigenvalue of $T_N$ if and only if $\xi=\frac{1}{p}-b$ is an eigenvalue of \eqref{weak_NBS}, with the same eigenfunctions. 

We have now a family of compact self-adjoint operators $T_{\lambda,N}$ each defined on the Hilbert space $H^2(\Omega)$ endowed with the scalar product \eqref{scalar_L}, and the compact self-adjoint operator $T_N$ defined on $\mathcal{H}^2_{0,N}(\Omega)$ endowed with the scalar product defined by the right-hand side of \eqref{tB}. We have the following theorem, the proof of which is similar to that of Theorem \ref{thm-limit-eta} and is accordingly omitted.

\begin{thm}\label{thm-limit-xi}
The family of operators $\{T_{\lambda,N}\}_{\lambda\in(-\infty,\eta_1)}$ compactly converges to $T_N$ as $\lambda\rightarrow-\infty$. In particular, 
\begin{equation}\label{limit_xi}
\lim_{\lambda\rightarrow-\infty}\mu_j(\lambda)=\xi_j,
\end{equation}
for all $j\in\mathbb N$, where $\xi_j$ are the eigenvalues of \eqref{weak_NBS}.
\end{thm}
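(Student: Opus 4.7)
The plan is to mimic the proof of Theorem \ref{thm-limit-eta}, swapping the roles of the two traces: here the large coefficient $-\lambda$ multiplies $\|\gamma_1(\cdot)\|_{L^2(\partial\Omega)}^2$ in $\|\cdot\|_{\lambda,N}$, so as $\lambda\to-\infty$ the normal derivative of any family of unit-norm elements is forced to vanish in the limit, producing functions in $\mathcal{H}^2_{0,N}(\Omega)$, while the Dirichlet trace remains controlled by the fixed $b$-term. I would take as extension operator the inclusion $E\equiv E_\lambda:\mathcal{H}^2_{0,N}(\Omega)\to H^2(\Omega)$; since $\gamma_1(u)=0$ on the source, the $\lambda$-dependent term in \eqref{scalar_L} drops out on $E(\mathcal{H}^2_{0,N}(\Omega))$ and $\|E(u)\|_{\lambda,N}^2$ agrees identically with the scalar product \eqref{tB} for every $\lambda<0$, so the normalization condition preceding Definition \ref{Ecompact} holds trivially. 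Note that for $\lambda<0$ the constant $b$ of \eqref{constraint-b} may be chosen independently of $\lambda$, so $\|\cdot\|_{\lambda,N}$ is uniformly equivalent to the standard $H^2$-norm in a neighborhood of $-\infty$; this is the only point where one must be slightly careful before proceeding.

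To verify condition (i) of Definition \ref{Ecompact}, given $\|u_\lambda-u\|_{\lambda,N}\to 0$ with $u\in\mathcal{H}^2_{0,N}(\Omega)$, I would test the defining identity
\begin{equation*}
\mathcal{Q}_{\lambda,N}(T_{\lambda,N}(u_\lambda),\varphi)+b(\gamma_0(T_{\lambda,N}(u_\lambda)),\gamma_0(\varphi))_{\partial\Omega}=(\gamma_0(u_\lambda),\gamma_0(\varphi))_{\partial\Omega}
\end{equation*}
against $\varphi=T_{\lambda,N}(u_\lambda)$ to obtain uniform $H^2$-bounds on $T_{\lambda,N}(u_\lambda)$ together with the crucial estimate $\|\gamma_1(T_{\lambda,N}(u_\lambda))\|_{L^2(\partial\Omega)}^2\leq C/|\lambda|\to 0$. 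Along a subsequence $T_{\lambda,N}(u_\lambda)\rightharpoonup w$ weakly in $H^2(\Omega)$ with $\gamma_1(w)=0$, so $w\in\mathcal{H}^2_{0,N}(\Omega)$; passing to the limit in the identity against test functions $\varphi\in\mathcal{H}^2_{0,N}(\Omega)$ identifies $w=T_N(u)$. Strong convergence in $\|\cdot\|_{\lambda,N}$ then follows by expanding $\|T_{\lambda,N}(u_\lambda)-T_N(u)\|_{\lambda,N}^2$ and using the identity
\begin{equation*}
\mathcal{Q}_{\lambda,N}(T_{\lambda,N}(u_\lambda),T_{\lambda,N}(u_\lambda))+b\|\gamma_0(T_{\lambda,N}(u_\lambda))\|_{L^2(\partial\Omega)}^2=(\gamma_0(u_\lambda),\gamma_0(T_{\lambda,N}(u_\lambda)))_{\partial\Omega}
\end{equation*}
together with the corresponding identity for $T_N(u)$ to pass to the limit on every cross term, exactly as in Theorem \ref{thm-limit-eta}.

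Condition (ii) is handled along the same lines: if $\|u_\lambda\|_{\lambda,N}=1$ then $u_\lambda$ is $H^2$-bounded by the uniform equivalence of norms, while $\|\gamma_1(u_\lambda)\|_{L^2(\partial\Omega)}^2\leq 1/|\lambda|\to 0$, so any weak-$H^2$ subsequential limit $u$ satisfies $\gamma_1(u)=0$ and hence lies in $\mathcal{H}^2_{0,N}(\Omega)$; the argument of (i) applied with $u_\lambda$ as data produces the required strong limit $w=T_N(u)\in\mathcal{H}^2_{0,N}(\Omega)$. Having established compact convergence $T_{\lambda,N}\to T_N$, Theorem \ref{spectral_convergence} yields convergence $p_j(\lambda)\to p_j(0)$ of the $j$-th eigenvalues of the respective operators for every $j\in\mathbb N$, and the relations $\mu_j(\lambda)=1/p_j(\lambda)-b$, $\xi_j=1/p_j(0)-b$ noted in the paragraphs preceding Theorem \ref{main-BSL} and Theorem \ref{thm-limit-xi} immediately translate this into \eqref{limit_xi}. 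The main subtlety beyond the routine bookkeeping is precisely the consistency of the $b$-shift on both sides of the limit, which works out because the same $b$ appears in \eqref{scalar_L} and in \eqref{tB} by construction.
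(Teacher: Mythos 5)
Your proposal is correct and is exactly the argument the paper intends: the paper omits this proof, stating it is analogous to that of Theorem \ref{thm-limit-eta}, and you carry out precisely that adaptation (inclusion of $\mathcal H^2_{0,N}(\Omega)$ as the extension operator, the bound $\|\gamma_1(\cdot)\|^2_{L^2(\partial\Omega)}\leq C/|\lambda|$ forcing limits into $\mathcal H^2_{0,N}(\Omega)$, identification of the limit with $T_N$, and the consistent $b$-shift via $\mu_j(\lambda)=1/p_j(\lambda)-b$, $\xi_j=1/p_j-b$). The only wording to tighten is ``uniform equivalence'' of $\|\cdot\|_{\lambda,N}$ with the $H^2$-norm: only the lower bound $\|v\|_{\lambda,N}^2\geq \mathcal Q_{\sigma}(v,v)+b\|\gamma_0(v)\|^2_{L^2(\partial\Omega)}$ is uniform in $\lambda<0$, but that one-sided estimate is all your argument actually uses.
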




\begin{rem}
We also note that each eigenvalue $\mu_j(\lambda)$ is non-increasing with respect to $\lambda$, for $\lambda\in(-\infty,\eta_1)$. In fact from the Min-Max Principle \eqref{minmaxL} it immediately follows that for all $j\in\mathbb N$, $\mu_j(\lambda_1)\leq\mu_j(\lambda_2)$ if $\lambda_1>\lambda_2$. 
\end{rem}




\section{Characterization of trace spaces of $H^2(\Omega)$ via biharmonic Steklov eigenvalues}\label{traceproblem}

In this section we shall use the Hilbert basis of eigenfunctions $v_{j,\mu}$ and $\hat v_{j,\mu}$ given by Theorem \ref{main-BSM} and the Hilbert basis of eigenfunctions $u_{j,\lambda},\hat u_{j,\lambda}$ given by Theorem \ref{main-BSL}, for all $\mu\in(-\infty,0)$ and $\lambda\in(-\infty,\eta_1)$. We recall that by definition, the functions $v_{j,\mu}$ and $u_{j,\lambda}$ are normalized with respect to $Q_{\mu,D}(\cdot,\cdot)$ and $Q_{\lambda,N}(\cdot,\cdot)+b(\gamma_0(\cdot),\gamma_0(\cdot))_{\partial\Omega}$ respectively, while $\hat v_{j,\mu}$ and $\hat u_{j,\lambda}$ are normalized with respect to the standard scalar product of $L^2(\partial\Omega)$.

We will also denote by $l^2$ the space of sequences $s=(s_j)_{j=1}^{\infty}$ of real numbers satisfying $\|s\|_{l^2}^2=\sum_{j=1}^{\infty}s_j^2<\infty$.

We define the spaces
\begin{equation}\label{S_lambda}
\mathcal S^{\frac{3}{2}}(\partial\Omega)=\mathcal S^{\frac{3}{2}}_{\lambda}(\partial\Omega):=\left\{f\in L^2(\partial\Omega): f=\sum_{j=1}^{\infty}\hat a_j\hat u_{j,\lambda}{\rm\ with\ }\left(\sqrt{|\mu_j(\lambda)|}\hat a_j\right)_{j=1}^{\infty}\in l^2\right\},
\end{equation}
and
\begin{equation}\label{S_mu}
\mathcal S^{\frac{1}{2}}(\partial\Omega)=\mathcal S^{\frac{1}{2}}_{\mu}(\partial\Omega):=\left\{f\in L^2(\partial\Omega): f=\sum_{j=1}^{\infty}\hat b_j\hat v_{j,\mu}{\rm\ with\ }\left(\sqrt{\lambda_j(\mu)}\hat b_j\right)_{j=1}^{\infty}\in l^2\right\}.
\end{equation}
These spaces are endowed with the natural norms defined by
$$
\|f\|_{\mathcal S^{\frac{3}{2}}_{\lambda}(\partial\Omega)}^2=\sum_{j=1}^{j_0-1}\hat a_j^2+\sum_{j=j_{0}}^{\infty}\mu_j(\lambda)\hat a_j^2,
$$
where $j_0$ is as in Theorem \ref{main-BSL}, and
$$
\|f\|_{\mathcal S^{\frac{1}{2}}_{\mu}(\partial\Omega)}^2=\sum_{j=1}^{\infty}\lambda_j(\mu)\hat b_j^2.
$$
Recall that if $\lambda=0$, $j_0=N+2$ and if $\lambda<0$, then $j_0=2$.

These spaces allow to describe the trace spaces for $H^2(\Omega)$. In particular, $\mathcal S^{\frac{3}{2}}_{\lambda}(\partial\Omega)$ and $\mathcal S^{\frac{1}{2}}_{\mu}(\partial\Omega)$ turn out to be independent of $\lambda$ and $\mu$. Namely, we have the following.

\begin{thm}\label{traces}
Let $\Omega$ be a bounded domain in $\mathbb R^N$ of class $C^{0,1}$. Then
\begin{equation}\label{trace_0}
\gamma_0(H^2(\Omega))=\gamma_0(H^2_{\lambda,N}(\Omega))=\mathcal S^{\frac{3}{2}}(\partial\Omega)\ (=\mathcal S^{\frac{3}{2}}_{\lambda}(\partial\Omega))
\end{equation}
and
\begin{equation}\label{trace_1}
\gamma_1(H^2(\Omega))=\gamma_1(H^2_{\mu,D}(\Omega))=\mathcal S^{\frac{1}{2}}(\partial\Omega)\ (=\mathcal S^{\frac{1}{2}}_{\mu}(\partial\Omega)).
\end{equation}
In particular, the spaces $\mathcal S_{\lambda}^{\frac{3}{2}}(\partial\Omega)$ and $\mathcal S_{\mu}^{\frac{1}{2}}(\partial\Omega)$ do not depend on $\lambda\in(-\infty,\eta_1)$ and $\mu\in(-\infty,0)$. 

Moreover, if $\Omega$ is of class $C^{2,1}$ then 
$$
\Gamma(H^2(\Omega))=\mathcal S^{\frac{3}{2}}(\partial\Omega)\times\mathcal S^{\frac{1}{2}}(\partial\Omega),
$$
hence 
\begin{equation*}
\mathcal S^{\frac{3}{2}}(\partial\Omega)=H^{\frac{3}{2}}(\partial\Omega)
\end{equation*}
and
\begin{equation*}
\mathcal S^{\frac{1}{2}}(\partial\Omega)=H^{\frac{1}{2}}(\partial\Omega).
\end{equation*}

\end{thm}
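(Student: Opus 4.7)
The plan is to deduce the three claims in sequence, starting from two orthogonal decompositions of $H^2(\Omega)$, then reading off the boundary traces via Fourier expansion in the Steklov bases, and finally comparing with the classical trace theorem in the smooth case.

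First I observe that the very definitions \eqref{HM} and \eqref{HL} furnish orthogonal splittings $H^2(\Omega)=H^2_{\mu,D}(\Omega)\oplus\mathcal H^2_{0,N}(\Omega)$ with respect to $\mathcal Q_{\mu,D}$, and $H^2(\Omega)=H^2_{\lambda,N}(\Omega)\oplus\mathcal H^2_{0,D}(\Omega)$ with respect to $\langle\cdot,\cdot\rangle_{\lambda,N}$ (the $b$-term in \eqref{scalar_L} vanishes when paired with a function of $\mathcal H^2_{0,D}(\Omega)$, so \eqref{HL} genuinely describes the orthogonal complement). Since $\gamma_0$ annihilates $\mathcal H^2_{0,D}(\Omega)$ and $\gamma_1$ annihilates $\mathcal H^2_{0,N}(\Omega)$, these decompositions immediately yield $\gamma_0(H^2(\Omega))=\gamma_0(H^2_{\lambda,N}(\Omega))$ and $\gamma_1(H^2(\Omega))=\gamma_1(H^2_{\mu,D}(\Omega))$.

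For the identification with $\mathcal S^{\frac{3}{2}}(\partial\Omega)$, I would take $u\in H^2_{\lambda,N}(\Omega)$ and expand $u=\sum_j c_j u_{j,\lambda}$ via Theorem \ref{main-BSL} with $(c_j)\in l^2$. Combining the normalization $\|u_{j,\lambda}\|_{\lambda,N}=1$ with the eigenvalue identity $\mathcal Q_{\lambda,N}(u_{j,\lambda},u_{j,\lambda})=\mu_j(\lambda)\|\gamma_0(u_{j,\lambda})\|_{L^2(\partial\Omega)}^2$ one finds $\|\gamma_0(u_{j,\lambda})\|_{L^2(\partial\Omega)}^2=(\mu_j(\lambda)+b)^{-1}$ and hence $\gamma_0(u_{j,\lambda})=(\mu_j(\lambda)+b)^{-1/2}\hat u_{j,\lambda}$ for $j\geq j_0$; for the finitely many $j<j_0$, the traces $\gamma_0(u_{j,\lambda})$ expand in the $L^2$-orthonormal family $\{\hat u_{k,\lambda}\}_{k<j_0}$ through an invertible finite-dimensional matrix. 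Termwise application of $\gamma_0$ then gives $\gamma_0(u)=\sum \hat a_j\hat u_{j,\lambda}$ with $\hat a_j=c_j/\sqrt{\mu_j(\lambda)+b}$ for $j\geq j_0$, whence $\mu_j(\lambda)\hat a_j^2\leq c_j^2$ and $\gamma_0(u)\in\mathcal S^{\frac{3}{2}}(\partial\Omega)$. Conversely, given $f=\sum\hat a_j\hat u_{j,\lambda}\in\mathcal S^{\frac{3}{2}}(\partial\Omega)\subset L^2(\partial\Omega)$, I set $c_j=\sqrt{\mu_j(\lambda)+b}\,\hat a_j$ for $j\geq j_0$ (with the obvious matrix inversion for $j<j_0$) and check that
$$\sum_j c_j^2=\text{(finite sum)}+\sum_{j\geq j_0}\mu_j(\lambda)\hat a_j^2+b\sum_{j\geq j_0}\hat a_j^2<\infty,$$
the last term being bounded by $b\|f\|_{L^2(\partial\Omega)}^2$; then $u=\sum c_j u_{j,\lambda}\in H^2_{\lambda,N}(\Omega)$ and $\gamma_0(u)=f$. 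The parallel argument for $\gamma_1(H^2_{\mu,D}(\Omega))=\mathcal S^{\frac{1}{2}}(\partial\Omega)$ is slightly simpler because $\mathcal Q_{\mu,D}$ is already the scalar product on $H^2_{\mu,D}(\Omega)$ and Theorem \ref{main-BSM} gives $\hat v_{j,\mu}=\sqrt{\lambda_j(\mu)}\gamma_1(v_{j,\mu})$, yielding $\lambda_j(\mu)\hat b_j^2=c_j^2$ with no correction term. The independence of $\mathcal S^{\frac{3}{2}}$ from $\lambda$ and of $\mathcal S^{\frac{1}{2}}$ from $\mu$ is then automatic, since $\gamma_0(H^2(\Omega))$ and $\gamma_1(H^2(\Omega))$ are fixed subsets of $L^2(\partial\Omega)$.

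For the final claim, assuming $\Omega$ is of class $C^{2,1}$ I would invoke the classical trace theorem, which asserts that $\Gamma$ is continuous and surjective from $H^2(\Omega)$ onto $H^{\frac{3}{2}}(\partial\Omega)\times H^{\frac{1}{2}}(\partial\Omega)$ with a continuous right inverse. Combining this with the first two parts gives $\mathcal S^{\frac{3}{2}}(\partial\Omega)=\gamma_0(H^2(\Omega))=H^{\frac{3}{2}}(\partial\Omega)$ and $\mathcal S^{\frac{1}{2}}(\partial\Omega)=H^{\frac{1}{2}}(\partial\Omega)$, and surjectivity of $\Gamma$ then produces the product identity $\Gamma(H^2(\Omega))=\mathcal S^{\frac{3}{2}}(\partial\Omega)\times\mathcal S^{\frac{1}{2}}(\partial\Omega)$. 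The main obstacle will be the bookkeeping around the three competing normalizations of the eigenfunctions ($u_{j,\lambda}$ with respect to $\|\cdot\|_{\lambda,N}$, $\tilde u_{j,\lambda}$ with respect to $\mathcal Q_{\lambda,N}$, and $\hat u_{j,\lambda}$ with respect to $L^2(\partial\Omega)$) together with the exceptional low modes $j<j_0$ appearing when $0\leq\lambda<\eta_1$; nothing is conceptually deep, but one must verify that the $L^2(\partial\Omega)$-membership of $f$ is genuinely needed to control the $b\sum\hat a_j^2$ term, which is not dominated by the $\mathcal S^{\frac{3}{2}}$-seminorm when $\mu_j(\lambda)$ is small.
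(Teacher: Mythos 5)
Your proposal is correct and follows essentially the same route as the paper: split $H^2(\Omega)$ orthogonally against $\mathcal H^2_{0,D}(\Omega)$ (resp. $\mathcal H^2_{0,N}(\Omega)$), expand in the Steklov eigenbasis, and translate between the $\|\cdot\|_{\lambda,N}$-, $\mathcal Q_{\lambda,N}$- and $L^2(\partial\Omega)$-normalizations exactly as in the paper's computation $\hat u_{j,\lambda}=\sqrt{\mu_j(\lambda)+b}\,\gamma_0(u_{j,\lambda})$, with the converse inclusion obtained by the same choice $c_j=\sqrt{\mu_j(\lambda)+b}\,\hat a_j$ (your remark that $f\in L^2(\partial\Omega)$ is what controls the $b\sum\hat a_j^2$ term is a point the paper leaves implicit). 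For the $C^{2,1}$ case you merely reorder the paper's argument, invoking surjectivity of $\Gamma$ onto $H^{\frac32}(\partial\Omega)\times H^{\frac12}(\partial\Omega)$ directly rather than through the splitting $H^2(\Omega)=\mathcal H^2_{0,D}(\Omega)+\mathcal H^2_{0,N}(\Omega)$, which rests on the same classical total trace theorem.
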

\begin{proof}
Let us begin by proving \eqref{trace_0}. By the definition of $H^2_{\lambda,N}(\Omega)$ given in \eqref{HL} and by Theorem \ref{main-BSL} we have that any $u\in H^2(\Omega)$ can be written as
$$
u=u_{\lambda}+v_D
$$ 
where $v_D\in \mathcal{H}^2_{0,D}(\Omega)$ and
$$
u_{\lambda}=\sum_{j=1}^{\infty} a_j u_{j,\lambda}
$$
for some coefficients $a_j$ satisfying $\sum_{j=1}^{\infty}a_j^2<\infty$. Here $\left\{u_{j,\lambda}\right\}_{j=1}^{\infty}$ is a orthonormal basis of $H^2_{\lambda,N}(\Omega)$ with respect to the scalar product \eqref{scalar_L} with $b$ satisfying \eqref{constraint-b}. Let $j_0$ be as in Theorem \ref{main-BSL}. Hence we can write
\begin{multline*}
u_{\lambda}=\sum_{j=1}^{j_0-1} a_j u_{j,\lambda}+\sum_{j=j_0}^{\infty} a_j u_{j,\lambda}\\
=\sum_{j=1}^{j_0-1} a_j u_{j,\lambda}+\sum_{j=j_0}^{\infty} \left(a_j \sqrt{\mathcal Q_{\lambda,N}(u_{j,\lambda},u_{j,\lambda})}\right)\cdot \frac{u_{j,\lambda}}{\sqrt{\mathcal Q_{\lambda,N}(u_{j,\lambda},u_{j,\lambda})}}\\
=\sum_{j=1}^{j_0-1} a_j u_{j,\lambda}+\sum_{j=j_0}^{\infty} \tilde a_j \tilde u_{j,\lambda},
\end{multline*}
where $\tilde u_{j,\lambda}=\frac{u_{j,\lambda}}{\sqrt{\mathcal Q_{\lambda,N}(u_{j,\lambda},u_{j,\lambda})}}$ are the eigenfunctions normalized with respect to $\mathcal Q_{\lambda,N}$ and $\tilde a_j$ still satisfy $\sum_{j=j_0+1}^{\infty}\tilde a_j^2 <\infty$ (in fact $0<\mathcal Q_{\lambda,N}(u_{j,\lambda},u_{j,\lambda})\leq 1$ for all $j\geq j_0$).\\
Clearly $\gamma_0(u)=\gamma_0(u_\lambda)$, hence by the continuity of the trace operator we have that
\begin{multline*}
\gamma_0(u_{\lambda})=\sum_{j=1}^{j_0-1} a_j \gamma_0 (u_{j,\lambda})+\sum_{j=j_0}^{\infty} \tilde a_j \gamma_0(\tilde u_{j,\lambda})\\
=\sum_{j=1}^{j_0-1} \frac{a_j}{\sqrt{\mu_j(\lambda)+b}}\cdot \left(\sqrt{\mu_j(\lambda)+b}\cdot\gamma_0 (u_{j,\lambda})\right)+\sum_{j=j_0}^{\infty} \frac{\tilde a_j}{\sqrt{\mu_j(\lambda)}}\cdot \gamma_0\left(\sqrt{\mu_j(\lambda)}\tilde u_{j,\lambda}\right)\\
=\sum_{j=1}^{j_0-1}\hat a_j\hat u_{j,\lambda}+\sum_{j=j_0}^{\infty}\hat a_j\hat u_{j,\lambda}=\sum_{j=1}^{\infty}\hat a_j\hat u_{j,\lambda},
\end{multline*}
where we have set
$$
\hat a_j=\frac{a_j}{\sqrt{\mu_j(\lambda)+b}}\,,\ \ \ \hat u_{j,\lambda}=\sqrt{\mu_j(\lambda)+b}\cdot\gamma_0 (u_{j,\lambda})
$$
for $j=1,...,j_0-1$ and
$$
\hat a_j=\frac{\tilde a_j}{\sqrt{\mu_j(\lambda)}}\,,\ \ \ \hat u_{j,\lambda}=\sqrt{\mu_j(\lambda)}\cdot\gamma_0 (\tilde u_{j,\lambda})
$$
for $j\geq j_0$.
This proves that $\gamma_0(H^2_{\lambda,N}(\Omega))\subseteq \mathcal S_{\lambda}^{\frac{3}{2}}(\partial\Omega)$.

We prove now the opposite inclusion. Let $f\in\mathcal S_{\lambda}^{\frac{3}{2}}(\partial\Omega)$. Then $f=\sum_{j=1}^{\infty}\hat a_j\hat u_{j,\lambda}$ with $\sum_{j=1}^{\infty}|\mu_j(\lambda)|\hat a_j^2<\infty$. Let $u:=\sum_{j=1}^{\infty}a_j u_{j,\lambda}$ where
\begin{equation}\label{aj}
a_j=\sqrt{\mu_j(\lambda)+b}\cdot\hat a_j
\end{equation}
By definition, $u\in H^2(\Omega)$ since $\sum_{j=1}^{\infty}a_j^2<\infty$. Moreover, we note that
\begin{equation}\label{f!}
f=\sum_{j=1}^{\infty}\hat a_j\hat u_{j,\lambda}=\sum_{j=1}^{\infty}\hat a_j\sqrt{\mu_j(\lambda)+b}\cdot\frac{\hat u_{j,\lambda}}{\sqrt{\mu_j(\lambda)+b}}=\sum_{j=1}^{\infty}\hat a_j\sqrt{\mu_j(\lambda)+b} \cdot \gamma_0(u_{j,\lambda}),
\end{equation}
hence $f=\gamma_0(u)\in\gamma_0(H^2(\Omega))$.

The proof of \eqref{trace_1} follows the same lines as that of \eqref{trace_0} and is accordingly omitted.

We deduce then that the spaces $\mathcal S_{\lambda}^{\frac{3}{2}}(\partial\Omega)$ and $\mathcal S_{\mu}^{\frac{1}{2}}(\partial\Omega)$ do not depend on the particular choice of $\lambda\in(-\infty,\eta_1)$ and $\mu\in(-\infty,0)$. In particular, we have proved that $\Gamma(H^2(\Omega))\subseteq\mathcal S_{\lambda}^{\frac{3}{2}}(\partial\Omega)\times\mathcal S_{\mu}^{\frac{1}{2}}(\partial\Omega)$. 

Assume now that $\Omega$ is of class $C^{2,1}$. We prove that $\mathcal S_{\lambda}^{\frac{3}{2}}(\partial\Omega)\times\mathcal S_{\mu}^{\frac{1}{2}}(\partial\Omega)\subseteq \Gamma(H^2(\Omega))$. This will imply $\Gamma(H^2(\Omega))=\mathcal S_{\lambda}^{\frac{3}{2}}(\partial\Omega)\times\mathcal S_{\mu}^{\frac{1}{2}}(\partial\Omega)$.\\
Let $(f,g)\in \mathcal S_{\lambda}^{\frac{3}{2}}(\partial\Omega)\times \mathcal S_{\mu}^{\frac{1}{2}}(\partial\Omega)$. This means that $f=\gamma_0(u_{\lambda})$, $g=\gamma_1(v_{\mu})$ for some $u_{\lambda}\in H^2_{\lambda,N}(\Omega)$, $v_{\mu}\in H^2_{\mu,D}(\Omega)$. We claim that there exist $v_D\in \mathcal{H}^2_{0,D}(\Omega)$ and $u_N\in \mathcal{H}^2_{0,N}(\Omega)$ such that $u_{\lambda}+v_D=v_{\mu}+u_N$. To do so, it suffices to prove the existence of $v_D\in \mathcal H^2_{0,D}(\Omega)$ and $u_N\in\mathcal H^2_{0,N}(\Omega)$ such that $u_{\lambda}-v_{\mu}=u_N-v_D$. We claim that 
\begin{equation}\label{split}
H^2(\Omega)=\mathcal H^2_{0,D}(\Omega)+\mathcal H^2_{0,N}(\Omega).
\end{equation}
Indeed, given $u\in H^2(\Omega)$, one can find by the classical Total Trace Theorem a function $u_1\in H^2(\Omega)$ such that  $\gamma_0(u_1)=0$ and $\gamma_1(u_1)=\gamma_1(u)$. Thus $u=u_1+(u-u_1)$ with $\gamma_1(u-u_1)=0$ and the claim is proved. Thus the existence of functions $v_D$ and $u_N$ follows by \eqref{split} and
  the function $u=u_{\lambda}+v_D=v_{\mu}+u_N$ is such that $f=\gamma_0(u)$ and $g=\gamma_1(u)$.
\end{proof}

\begin{rem}
Theorem \ref{traces} gives an explicit spectral characterization of the space $\gamma_0(H^2(\Omega))$ of traces of functions in $H^2(\Omega)$ when $\Omega$ is a bounded domain of class $C^{0,1}$ in $\mathbb R^N$. This space corresponds to $H^{\frac{3}{2}}(\partial\Omega)$ when $\Omega$ is of class $C^{2,1}$. In this case explicit descriptions of the space $H^\frac{3}{2}(\partial\Omega)$ are available in the literature and typically are given by means local charts and explicit representation of derivatives, see e.g., \cite{grisvard,necas}. 

For domains of class $C^{0,1}$, it is not clear what is the appropriate definition of $H^{\frac{3}{2}}(\partial\Omega)$. Sometimes $H^{\frac{3}{2}}(\partial\Omega)$ is defined just by setting
$$
H^{\frac{3}{2}}(\partial\Omega):=\gamma_0(H^2(\Omega)).
$$
According to this definition, Theorem \ref{traces} implies that $H^{\frac{3}{2}}(\partial\Omega)=\mathcal S^{\frac{3}{2}}(\partial\Omega)$  also for domains of class $C^{0,1}$.
 
\end{rem}

From Theorem \ref{traces} it follows that if $\Omega$ is a domain of class $C^{0,1}$, then 
\begin{equation}\label{trace_inclusion}
\Gamma(H^2(\Omega))\subseteq \mathcal S^{\frac{3}{2}}(\partial\Omega)\times\mathcal S^{\frac{1}{2}}(\partial\Omega),
\end{equation}
and equality holds if $\Omega$ is of class $C^{2,1}$. We observe that if $\Omega$ is not of class $C^{2,1}$, then in general equality does not hold in \eqref{trace_inclusion}. Indeed, we have the following counterexample.

\begin{cex}\label{cex}
Let $\Omega=(0,1)\times(0,1)$ be unit square in $\mathbb R^2$. We prove that
$$
\Gamma(H^2(\Omega))\subsetneq \mathcal S^{\frac{3}{2}}(\partial\Omega)\times\mathcal S^{\frac{1}{2}}(\partial\Omega).
$$
To do so, we consider the real-valued function $\varphi$ defined in $\Omega$ by $\varphi(x_1,x_2)=x_1$ for all $(x_1,x_2)\in\Omega$ and we prove that the couple $(\gamma_0(\varphi),0)\in(\mathcal S^{\frac{3}{2}}(\partial\Omega)\times\mathcal S^{\frac{1}{2}}(\partial\Omega))\setminus\Gamma(H^2(\Omega))$. It is obvious that $\gamma_0(\varphi)\in \mathcal S^{\frac{3}{2}}(\partial\Omega)$ since $\varphi\in H^2(\Omega)$. Assume now by contradiction that $(\gamma_0(\varphi),0)\in \Gamma(H^2(\Omega))$, that is, there exists $u\in H^2(\Omega)$ such that $\gamma_0(u)=\gamma_0(\varphi)$ and $\gamma_1(u)=0$. Clearly, since $\gamma_0(u)=\gamma_0(\varphi)$, there exists  $v_D\in \mathcal{H}^2_{0,D}$ such that
$$
u=\varphi+v_D
$$
and hence
$$
\gamma_1(v_D)=\gamma_1(u)-\gamma_1(\varphi)=-\nabla x_1\cdot\nu_{|_{\partial\Omega}}=-\nu_1.
$$
It follows that $v_D$ is a function in $H^2(\Omega)$ such that $\gamma_0(v_D)=0$ and $\gamma_1(v_D)=-\nu_1$, but this opposes a well-known necessary (and sufficient) condition for a couple $(f,g)\in H^1(\partial\Omega)\times L^2(\partial\Omega)$ to belong to $\Gamma(H^2(\Omega))$, namely 
\begin{equation}\label{comp_cond}
\frac{\partial f}{\partial \tau}\tau+g\nu\in H^{\frac{1}{2}}(\partial\Omega),
\end{equation}
where $\tau$ is the unit tangent vector (positively oriented with respect to the outer unit $\nu$ to $\Omega$), see \cite{geymonat_airy,grisvard}. Indeed, the couple $(0,-\nu_1)$ does not satisfy condition \eqref{comp_cond}.

\end{cex}

In order to characterize those couples $(f,g)\in \mathcal S^{\frac{3}{2}}(\partial\Omega)\times\mathcal S^{\frac{1}{2}}(\partial\Omega)$ which  belong to $\Gamma(H^2(\Omega))$  when $\Omega$ is of class $C^{0,1}$, we need the spaces $\mathscr S^{\frac{3}{2}}(\partial\Omega)$ and $\mathscr S^{\frac{1}{2}}(\partial\Omega)$ defined by
$$
\mathscr S^{\frac{3}{2}}(\partial\Omega):=\gamma_0(\mathcal{H}^2_{0,N})=\gamma_0(\mathcal B_N(\Omega))
$$
and
$$
\mathscr S^{\frac{1}{2}}(\partial\Omega):=\gamma_1(\mathcal{H}^2_{0,D})=\gamma_1(\mathcal B_D(\Omega)).
$$
The  spaces $\mathscr S^{\frac{3}{2}}(\partial\Omega)$ and $\mathscr S^{\frac{1}{2}}(\partial\Omega)$ 
have  explicit descriptions similar to those of $\mathcal S^{\frac{3}{2}}_{\lambda}(\partial\Omega)$ and $\mathcal S^{\frac{1}{2}}_{\mu}(\partial\Omega)$, namely
\begin{equation}
\label{s32}
\mathscr S ^{\frac{3}{2}}(\partial\Omega )=\biggl\{ f\in L^2(\partial \Omega ):\ f=\sum_{j=1}^{\infty }\hat c_j\hat u_j\  {\rm with}\  ( \sqrt{\xi_j}\hat c_j)_{j=1}^{\infty}\in l^2\biggr\}\, .
\end{equation}
and
\begin{equation}
\label{s12}
\mathscr S ^{\frac{1}{2}}(\partial\Omega)=\biggl\{ g\in L^2(\partial \Omega ):\ g=\sum_{j=1}^{\infty }\hat d_j\hat v_j\  {\rm with}\  (\sqrt{\eta_j}\hat d_j)_{j=1}^{\infty}\in l^2    \biggr\}\, .
\end{equation}
Here $\hat u_j=\sqrt{\xi_j}\gamma_0(u_j)$, $j\geq 2$, where $\left\{u_j\right\}_{j=1}^{\infty}$ is a Hilbert basis of eigenfunctions of problem \eqref{weak_NBS}, normalized with respect to $\mathcal Q_{\sigma}$, with the understanding that $u_1$ and $\hat u_1$ equal the constant  $|\partial\Omega |^{-1/2}$, and $\hat v_j=\sqrt{\eta_j}\gamma_1(v_j)$, where $\left\{v_j\right\}_{j=1}^{\infty}$ is a Hilbert basis of eigenfunctions of problem \eqref{weak_DBS} normalized with respect to $\mathcal Q_{\sigma}$. 



Note that
$$
\mathscr S^{\frac{3}{2}}(\partial\Omega)\times\mathscr S^{\frac{1}{2}}(\partial\Omega)=\Gamma(\mathcal H^2_{0,N}(\Omega)+\mathcal H^2_{0,D}(\Omega))\subseteq \Gamma(H^2(\Omega)).
$$
One can see by similar arguments as in Counterexample \ref{cex} that in general $\mathscr S^{\frac{3}{2}}(\partial\Omega)\times\mathscr S^{\frac{1}{2}}(\partial\Omega)	\subsetneq \Gamma(H^2(\Omega))$ if $\Omega$ is not of class $C^{2,1}$, while equality occurs if $\Omega$ is of class $C^{2,1}$ by \eqref{split}.

We are now ready to characterize the trace space $\Gamma(H^2(\Omega))$ for domains $\Omega$ of class $C^{0,1}$.

\begin{thm}\label{total_trace_2}
Let $\Omega$ be a bounded domain in $\mathbb R^N$ of class $C^{0,1}$. Let $(f,g)\in \mathcal S^{\frac{3}{2}}(\partial\Omega)\times\mathcal S^{\frac{1}{2}}(\partial\Omega)= \mathcal S^{\frac{3}{2}}_{\lambda}(\partial\Omega)\times\mathcal S^{\frac{1}{2}}_{\mu}(\partial\Omega)$ be given by
\begin{equation}\label{total_trace_2_form}
f=\sum_{j=1}^{\infty}\hat a_j \hat u_{j,\lambda}\,,\ \ \ g=\sum_{j=1}^{\infty}\hat b_j \hat v_{j,\mu}
\end{equation}
for some $\lambda\in(-\infty,\eta_1)$, $\mu\in(-\infty,0)$, with $\left(\sqrt{|\mu_j(\lambda)|}\hat a_j\right)_{j=1}^{\infty},\left(\sqrt{\lambda_j(\mu)}\hat b_j\right)_{j=1}^{\infty}\in l^2$. Then $(f,g)$ belongs to $\Gamma(H^2(\Omega))$ if and only if
\begin{equation}\label{comp_1}
\sum_{j=1}^{\infty}a_j\gamma_1(u_{j,\lambda})-g\in \mathscr S^{\frac{1}{2}}(\partial\Omega),
\end{equation}
where $a_j$ are given by \eqref{aj}.

Equivalently, $(f,g)$ belongs to $\Gamma(H^2(\Omega))$ if and only if
\begin{equation}\label{comp_2}
\sum_{j=1}^{\infty}b_j\gamma_0(v_{j,\mu})-f\in \mathscr S^{\frac{3}{2}}(\partial\Omega),
\end{equation}
 where $b_j=\sqrt{\lambda_j(\mu)}\hat b_j$.
\end{thm}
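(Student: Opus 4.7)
The plan is to exploit the function $u_\lambda:=\sum_{j=1}^\infty a_j u_{j,\lambda}$ already constructed in the proof of Theorem \ref{traces}, with $a_j=\sqrt{\mu_j(\lambda)+b}\,\hat a_j$ as in \eqref{aj}. The hypothesis $(\sqrt{|\mu_j(\lambda)|}\hat a_j)_j\in l^2$, together with $f\in L^2(\partial\Omega)$ (so $(\hat a_j)_j\in l^2$), yields $\sum_j a_j^2<\infty$; hence $u_\lambda\in H^2_{\lambda,N}(\Omega)$ and, by the continuity of $\gamma_0$ and $\gamma_1$ from $H^2(\Omega)$ to $L^2(\partial\Omega)$,
$$
\gamma_0(u_\lambda)=f,\qquad \gamma_1(u_\lambda)=\sum_{j=1}^\infty a_j\,\gamma_1(u_{j,\lambda})\ \text{ in }L^2(\partial\Omega).
$$
Consequently, the expression appearing in \eqref{comp_1} is precisely $\gamma_1(u_\lambda)-g$.

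For the \emph{necessity} of \eqref{comp_1}, assume $(f,g)=\Gamma(u)$ for some $u\in H^2(\Omega)$. Then $w:=u-u_\lambda\in H^2(\Omega)$ satisfies $\gamma_0(w)=f-f=0$, so $w\in \mathcal{H}^2_{0,D}(\Omega)$. Therefore $\gamma_1(u_\lambda)-g=-\gamma_1(w)$ belongs to $\gamma_1(\mathcal{H}^2_{0,D}(\Omega))=\mathscr{S}^{\frac{1}{2}}(\partial\Omega)$, which is a linear subspace.

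For the \emph{sufficiency}, assume \eqref{comp_1}. By the definition $\mathscr{S}^{\frac{1}{2}}(\partial\Omega)=\gamma_1(\mathcal{H}^2_{0,D}(\Omega))$ we can pick $w\in \mathcal{H}^2_{0,D}(\Omega)$ with $\gamma_1(w)=\gamma_1(u_\lambda)-g$. Setting $u:=u_\lambda-w\in H^2(\Omega)$, we obtain $\gamma_0(u)=\gamma_0(u_\lambda)-0=f$ and $\gamma_1(u)=\gamma_1(u_\lambda)-\gamma_1(w)=g$, so $(f,g)\in\Gamma(H^2(\Omega))$.

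The equivalent condition \eqref{comp_2} is obtained by the symmetric construction: start from $v_\mu:=\sum_j b_j v_{j,\mu}\in H^2_{\mu,D}(\Omega)$, which by Theorem \ref{traces} satisfies $\gamma_1(v_\mu)=g$ and $\gamma_0(v_\mu)=\sum_j b_j\gamma_0(v_{j,\mu})$, and repeat the above argument interchanging the roles of $\mathcal{H}^2_{0,D}(\Omega)$ and $\mathcal{H}^2_{0,N}(\Omega)$, and of $\mathscr{S}^{\frac{1}{2}}(\partial\Omega)$ and $\mathscr{S}^{\frac{3}{2}}(\partial\Omega)=\gamma_0(\mathcal{H}^2_{0,N}(\Omega))$. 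Given the spectral machinery already in place, the argument is essentially bookkeeping; the only real obstacle, which is purely conceptual, is the recognition that the obstruction to realizing $(f,g)$ as a total trace is precisely the failure of the compatibility defect $\gamma_1(u_\lambda)-g$ (respectively $\gamma_0(v_\mu)-f$) to lie in the range of $\gamma_1$ on $\mathcal{H}^2_{0,D}(\Omega)$ (respectively of $\gamma_0$ on $\mathcal{H}^2_{0,N}(\Omega)$), which is exactly the content of the auxiliary spaces $\mathscr{S}^{\frac{1}{2}}(\partial\Omega)$ and $\mathscr{S}^{\frac{3}{2}}(\partial\Omega)$ introduced just before the statement.
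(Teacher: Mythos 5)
Your proposal is correct and follows essentially the same route as the paper: both arguments hinge on the function $u_{\lambda}=\sum_j a_j u_{j,\lambda}$ with $a_j$ as in \eqref{aj}, the identity $\gamma_0(u_{\lambda})=f$ from \eqref{f!}, and the observation that any $u$ with $\gamma_0(u)=f$ differs from $u_{\lambda}$ by an element of $\mathcal H^2_{0,D}(\Omega)$, so that the defect $\gamma_1(u_{\lambda})-g$ lies in $\mathscr S^{\frac{1}{2}}(\partial\Omega)=\gamma_1(\mathcal H^2_{0,D}(\Omega))$ exactly when $(f,g)\in\Gamma(H^2(\Omega))$. The symmetric treatment of \eqref{comp_2} via $v_{\mu}$ likewise matches the paper, which omits it as analogous.
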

\begin{proof}
Assume that $(f,g)\in \Gamma(H^2(\Omega))$. Then $f=\gamma_0(u_{\lambda}+v_D)$ where $v_D\in \mathcal{H}^2_{0,D}(\Omega)$ and $u_{\lambda}=\sum_{j=1}^{\infty}a_ju_{j,\lambda}$ with the coefficients $a_j$ given by \eqref{aj}. Moreover, $g=\gamma_1(u_{\lambda}+v_D)$ by the continuity of the trace operator. We deduce that
$$
\gamma_1(u_{\lambda})-g=-\gamma_1(v_D)\in\mathscr S^{\frac{1}{2}}(\partial\Omega).
$$
This proves \eqref{comp_1}. Vice versa, assume that \eqref{comp_1} holds. Then there exist $v_D\in \mathcal{H}^2_{0,D}(\Omega)$ such that $\gamma_1(v_D)=\sum_{j=1}^{\infty}a_j\gamma_1(u_{j,\lambda})-g$. Thus 
$$
\gamma_1\left(\sum_{j=1}^{\infty}a_ju_{j,\lambda}-v_D\right)=g
$$
and
$$
\gamma_0\left(\sum_{j=1}^{\infty}a_ju_{j,\lambda}-v_D\right)=f
$$
by \eqref{f!}. The proof of the second part of the statement follows the same lines as that of the first part and is accordingly omitted.
 \end{proof}

\subsection{Representation of the solutions to the Dirichlet problem}\label{dir_prob_2}
Using the Steklov expansions in \eqref{S_lambda} and \eqref{S_mu} and the characterization of the total trace space $\Gamma(H^2(\Omega))$ given by Theorem \ref{total_trace_2} we are able to describe the solutions to the Dirichlet problem \eqref{Dirichlet_problem}.

\begin{cor}
Let $\Omega$ be a bounded domain in $\mathbb R^N$ of class $C^{0,1}$, $(f,g)\in L^2(\partial\Omega)\times L^2(\partial\Omega)$. Then, there exists a solution $u\in H^2(\Omega)$ to problem \eqref{Dirichlet_problem} if and only if the couple $(f,g)$ belongs to $\mathcal S^{\frac{3}{2}}(\partial\Omega)\times \mathcal S^{\frac{1}{2}}(\partial\Omega)$ and satisfies condition \eqref{comp_1} or, equivalently, condition \eqref{comp_2}. In this case, if $f,g$ are represented as in \eqref{total_trace_2_form}, then the solution $u$ can be represented in each of the following two forms:
\begin{enumerate}[i)]
\item if $u_{\lambda}:=\sum_{j=1}^{\infty}a_j u_{j,\lambda}$ where $a_j$ are given by \eqref{aj} and $g-\gamma_1(u_{\lambda})$ is represented by $\sum_{j=1}^{\infty}\hat d_j\hat v_j\in\mathscr S^{\frac{1}{2}}(\partial\Omega)$, then
$$
u=u_{\lambda}+v_D
$$
with $u_{\lambda}\in H^2_{\lambda,N}(\Omega)$ and $v_D=\sum_{j=1}^{\infty}d_jv_j\in\mathcal B_D(\Omega)$, $d_j=\sqrt{\eta_j}\hat d_j$ for all $j\in\mathbb N$.

\item if $v_{\mu}:=\sum_{j=1}^{\infty}b_j v_{j,\mu}$ where $b_j=\sqrt{\lambda_j(\mu)}\hat b_j$ and $f-\gamma_0(v_{\mu})$ is represented by $\sum_{j=1}^{\infty}\hat c_j\hat u_j\in\mathscr S^{\frac{3}{2}}(\partial\Omega)$, then
$$
u=v_{\mu}+u_N
$$
with $v_{\mu}\in H^2_{\mu,D}(\Omega)$ and $u_N=\sum_{j=1}^{\infty}c_ju_j\in\mathcal B_N(\Omega)$, $c_j=\sqrt{\xi_j}\hat c_j$ for all $j\in\mathbb N$, $j\geq 2$, $c_1=\hat c_1$.

\end{enumerate}

Moreover the solution $u$ is unique
\end{cor}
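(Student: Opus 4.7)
My plan is to reduce the corollary to the trace characterization provided by Theorem \ref{total_trace_2}, exploiting the fact that every element of the spaces $H^2_{\lambda,N}(\Omega)$, $H^2_{\mu,D}(\Omega)$, $\mathcal B_D(\Omega)$, and $\mathcal B_N(\Omega)$ is automatically biharmonic. Indeed, testing the respective defining orthogonality condition against $\varphi\in H^2_0(\Omega)$ kills the boundary terms, leaves $\mathcal Q_\sigma(u,\varphi)=0$, and this reduces via the identity $\int_\Omega D^2u:D^2\varphi\,dx=\int_\Omega\Delta u\Delta\varphi\,dx$ on $H^2_0(\Omega)$ to the weak biharmonic equation. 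Since biharmonicity is preserved under limits in $H^2(\Omega)$, any convergent Steklov expansion in the bases from Theorems \ref{thm_DBS}, \ref{thm_NBS}, \ref{main-BSM}, \ref{main-BSL} produces a biharmonic $H^2$ function.

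The forward implication of the solvability criterion is then immediate: if $u\in H^2(\Omega)$ solves \eqref{Dirichlet_problem}, then $(f,g)=\Gamma(u)\in\Gamma(H^2(\Omega))$, so Theorem \ref{total_trace_2} yields the claimed membership in $\mathcal S^{\frac{3}{2}}(\partial\Omega)\times\mathcal S^{\frac{1}{2}}(\partial\Omega)$ together with the compatibility condition \eqref{comp_1} (equivalently \eqref{comp_2}).

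For the converse I will simultaneously construct the solution in the form required by representation i). Given $(f,g)\in\mathcal S^{\frac{3}{2}}(\partial\Omega)\times\mathcal S^{\frac{1}{2}}(\partial\Omega)$ satisfying \eqref{comp_1}, I set $u_\lambda:=\sum_{j=1}^\infty a_j u_{j,\lambda}$ with $a_j$ as in \eqref{aj}. The summability $\sum_{j=1}^\infty a_j^2=\sum_{j=1}^\infty (\mu_j(\lambda)+b)\hat a_j^2<\infty$ holds precisely because $f\in\mathcal S^{\frac{3}{2}}(\partial\Omega)$, so $u_\lambda$ is a well defined biharmonic element of $H^2_{\lambda,N}(\Omega)$, and the computation already performed in the proof of Theorem \ref{traces} shows $\gamma_0(u_\lambda)=f$. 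The compatibility condition \eqref{comp_1} says $g-\gamma_1(u_\lambda)\in\mathscr S^{\frac{1}{2}}(\partial\Omega)$, hence by \eqref{s12} this boundary datum admits an expansion $\sum_{j=1}^\infty \hat d_j\hat v_j$ with $\sum_{j=1}^\infty \eta_j\hat d_j^2<\infty$; setting $d_j:=\sqrt{\eta_j}\hat d_j$ yields a square summable sequence, and the function $v_D:=\sum_{j=1}^\infty d_j v_j\in\mathcal B_D(\Omega)$ is biharmonic with $\gamma_0(v_D)=0$ and $\gamma_1(v_D)=g-\gamma_1(u_\lambda)$. Therefore $u:=u_\lambda+v_D$ is biharmonic in $H^2(\Omega)$ with $\gamma_0(u)=f$ and $\gamma_1(u)=g$, establishing both the reverse implication and representation i). Representation ii) is obtained by the symmetric argument, interchanging the roles of $\lambda$ and $\mu$, of $\gamma_0$ and $\gamma_1$, and using the Hilbert basis of $\mathcal B_N(\Omega)$ from Theorem \ref{thm_NBS}.

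Uniqueness is routine: the difference of two solutions lies in $H^2_0(\Omega)$ and is biharmonic, so testing the weak biharmonic equation against this difference and invoking the coercivity of $\mathcal Q_\sigma$ on $H^2_0(\Omega)$ forces it to vanish. I expect the principal obstacle to be purely notational: keeping track of the two regimes $j<j_0$ and $j\ge j_0$ in the definitions of $\hat u_{j,\lambda}$ and $a_j$, which involve the differing factors $\sqrt{\mu_j(\lambda)+b}$ and $\sqrt{\mu_j(\lambda)}$ respectively, and relating them correctly to the expansion coefficients $\hat a_j$, $\hat d_j$, $d_j$ appearing in the statement. This bookkeeping is however explicitly carried out in the proofs of Theorems \ref{traces} and \ref{total_trace_2} and does not require any new idea.
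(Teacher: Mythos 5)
Your proposal is correct and follows essentially the same route as the paper: the equivalence is reduced to Theorem \ref{total_trace_2}, the solution is built explicitly as $u=u_{\lambda}+v_D$ (resp.\ $u=v_{\mu}+u_N$) using the compatibility condition \eqref{comp_1} (resp.\ \eqref{comp_2}) and the trace computations from Theorem \ref{traces}, and uniqueness follows because a solution with zero data lies in $H^2_0(\Omega)$ and is biharmonic, hence vanishes. Your added remarks (biharmonicity of the spaces involved and its stability under $H^2$-limits, and the coercivity argument for uniqueness) merely make explicit what the paper leaves implicit.
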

\begin{proof}
The first part of the statement is an immediate consequence of Theorem \ref{total_trace_2}. Indeed, if there exists a solution $u\in H^2(\Omega)$ then $(f,g)$ belongs to $\Gamma(H^2(\Omega))$, hence it satisfies \eqref{comp_1} and \eqref{comp_2}. 

Vice versa, if $(f,g)$ satisfies \eqref{comp_1}, then $u_{\lambda}\in H^2_{\lambda,N}(\Omega)$, $v_D\in\mathcal B_D(\Omega)$ are well-defined and $u=u_{\lambda}+v_D$ is a biharmonic function in $H^2(\Omega)$ such that $\gamma_0(u)=\gamma_0(u_{\lambda})=f$ and $\gamma_1(v_D)=g-\gamma_1(u_{\lambda})$, hence $\gamma_1(u)=\gamma_1(u_{\lambda}+v_D)=g$.

Similarly, if $(f,g)$ satisfies \eqref{comp_2}, then $v_{\mu}\in H^2_{\mu,D}(\Omega)$, $u_N\in\mathcal B_N(\Omega)$ are well-defined and $u=v_{\mu}+u_N$ is a biharmonic function in $H^2(\Omega)$ such that $\gamma_1(u)=\gamma_1(v_{\mu})=g$ and $\gamma_0(u_N)=f-\gamma_0(v_{\mu})$, hence $\gamma_0(u)=\gamma_0(v_{\mu}+u_N)=f$.

The uniqueness of the solution  in $H^2(\Omega)$ follows from the fact that a solution $u$ in $H^2(\Omega)$ of \eqref{Dirichlet_problem} with $f=g=0$ must belong to $H^2_0(\Omega)$ and, since it is biharmonic, it must also belong to the orthogonal of $H^2_0(\Omega)$, hence $u=0$.
\end{proof}

\appendix




\section{Eigenvalues and eigenfunctions on the ball}\label{ball}

In this section we compute the eigenvalues and the eigenfunctions of \eqref{BSM} and \eqref{BSL} when $\Omega=B$ is the unit ball in $\mathbb R^N$ centered at the origin and $\sigma=0$. It is convenient to use spherical coordinates $(r,\theta)$, where $\theta=(\theta_1,...,\theta_{N-1})$. The corresponding transformation of coordinates is
\begin{eqnarray*}
x_1&=&r \cos(\theta_1),\\
x_2&=&r\sin(\theta_1)\cos(\theta_2),\\
\vdots\\
x_{N-1}&=&r\sin(\theta_1)\sin(\theta_2)\cdots\sin(\theta_{N-2})\cos(\theta_{N-1}),\\
x_N&=&r\sin(\theta_1)\sin(\theta_2)\cdots\sin(\theta_{N-2})\sin(\theta_{N-1}),
\end{eqnarray*}
with $\theta_1,...,\theta_{N-2}\in [0,\pi]$, $\theta_{N-1}\in [0,2\pi)$ (here it is understood that $\theta_1\in [0,2\pi)$ if $N=2$).

The boundary conditions for fixed parameters $\lambda,\mu\in\mathbb R$
\begin{equation}\label{BC}
\begin{cases}
\frac{\partial^2 u}{\partial\nu^2}=\lambda\frac{\partial u}{\partial\nu}, & {\rm on\ }\partial B,\\
-{\rm div}_{\partial\Omega}(D^2u\cdot\nu)_{\partial\Omega}-\frac{\partial\Delta u}{\partial\nu}=\mu u, & {\rm on\ }\partial B,
\end{cases}
\end{equation}
are written in spherical coordinates as
\begin{equation}\label{BC2}
\begin{cases}
\frac{\partial^2 u}{\partial r^2 }_{|_{r=1}}=\lambda\frac{\partial u}{\partial r }_{|_{r=1}} ,\\
-\frac{1}{r^2}{\Delta_S}\Big(\frac{\partial u}{\partial r}-\frac{u}{r}\Big)-\frac{\partial\Delta u}{\partial r}_{|_{r=1}}=\mu u_{|_{r=1}},
\end{cases}
\end{equation}
where $\Delta_S$ is the Laplace-Beltrami operator on the unit sphere. It is well known that the eigenfunctions can be written as a product of a radial part and an angular part (see e.g., \cite{chasman} for details). In particular, the radial part is given in terms of power-type functions, while the angular part is written in terms of spherical harmonics.  We have the following theorem.

\begin{thm}\label{fundamental-2}
Let $\Omega=B$ be the unit ball in $\mathbb R^N$ and $(\lambda,\mu)\in\mathbb R^2$. Then there exists a non-trivial solution to problem $\Delta^2u=0$ on $B$ with boundary conditions \eqref{BC} if and only if there exists $l\in\mathbb N_0$ such that ${\rm det}M_l(\lambda,\mu)=0$, where $M_l(\lambda,\mu)$ is the matrix defined by
\begin{equation}\label{matrix}
M_l(\lambda,\mu)=
\begin{pmatrix} 
l(l-1-\lambda) & (l+2)(l+1-\lambda) \\
 l(l+N-2)(l-1)-\mu &     l(l(l-5)+N(l-1)-2)-\mu,
\end{pmatrix}
\end{equation}
for all $l\in\mathbb N_0$. If $(\lambda,\mu)\in\mathbb R^2$ solves the equation ${\rm det}M_l(\lambda,\mu)=0$ for some $l\in\mathbb N_0$, then the associated solutions can be written in the form
\begin{equation*}
u_l(r,\theta)=\left(A_lr^{l}+B_lr^{l+2}\right)H_l(\theta),
\end{equation*}
where $(A_l,B_l)\in\mathbb R^2$ solves the linear system $M_l(\lambda,\mu)\cdot (A_l,B_l)=0$ and $H_l(\theta)$ is a spherical harmonic of degree $l$ in $\mathbb R^N$. 
\end{thm}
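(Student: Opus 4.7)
The plan is to separate variables and exploit the rotational invariance of $\Delta^2$ to decouple the equation $\Delta^2 u = 0$ on each spherical harmonic mode. Writing the unknown as
\[
u(r,\theta) = \sum_{l=0}^{\infty} \sum_{m} f_{l,m}(r) Y_{l,m}(\theta),
\]
with $\{Y_{l,m}\}_m$ an orthonormal basis of the spherical harmonics of degree $l$ on $\partial B$ (so $\Delta_S Y_{l,m} = -l(l+N-2) Y_{l,m}$), and setting $L_l := \partial_r^2 + \tfrac{N-1}{r}\partial_r - \tfrac{l(l+N-2)}{r^2}$, the equation $\Delta^2 u = 0$ becomes $L_l^2 f_{l,m} = 0$ on $(0,1)$. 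This Euler-type ODE has four-dimensional kernel generically spanned by $r^l,\ r^{l+2},\ r^{2-N-l},\ r^{4-N-l}$; the condition $u\in H^2(B)$, and in particular regularity at the origin, rules out the last two, giving $f_{l,m}(r) = A_{l,m} r^l + B_{l,m} r^{l+2}$.

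Next I would rewrite the boundary conditions \eqref{BC} in the spherical form \eqref{BC2}, the key point being the identity
\[
-\mathrm{div}_{\partial B}(D^2 u \cdot \nu)_{\partial B}\big|_{r=1} = -\tfrac{1}{r^2}\Delta_S\bigl(\partial_r u - u/r\bigr)\big|_{r=1},
\]
valid for sufficiently smooth $u$; this follows from $\nu = \partial_r$ on the sphere, the symmetry of $D^2 u$, and the fact that the second fundamental form of $\partial B$ equals the identity. Substituting $u = (A r^l + B r^{l+2}) Y_{l,m}$ and using the two elementary identities $\Delta(r^l Y_{l,m}) = 0$ and $\Delta(r^{l+2} Y_{l,m}) = 2(2l+N) r^l Y_{l,m}$, one obtains at $r=1$ the values $\partial_r u = (lA + (l+2)B) Y_{l,m}$, $\partial_r^2 u = (l(l-1)A + (l+2)(l+1)B) Y_{l,m}$, $\partial_r u - u = ((l-1)A + (l+1)B) Y_{l,m}$, and $\partial_r \Delta u = 2l(2l+N) B\, Y_{l,m}$. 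Applying the two boundary conditions and invoking orthogonality of $\{Y_{l,m}\}$ in $L^2(\partial B)$, the problem decouples mode by mode into the $2\times 2$ linear system $M_l(\lambda,\mu)(A_{l,m}, B_{l,m})^{T} = 0$, and a direct bookkeeping check shows that the entries are exactly those in \eqref{matrix} (in particular that $l(l+N-2)(l+1) - 2l(2l+N)$ simplifies to $l(l(l-5) + N(l-1) - 2)$).

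A nontrivial solution to the biharmonic problem with boundary conditions \eqref{BC} thus exists if and only if some mode admits $(A_{l,m}, B_{l,m}) \ne 0$ solving this system, i.e.\ iff $\det M_l(\lambda,\mu) = 0$ for some $l\in\mathbb N_0$; since $M_l$ is independent of $m$, the corresponding solutions have precisely the form $u_l = (A_l r^l + B_l r^{l+2}) H_l(\theta)$ for an arbitrary spherical harmonic $H_l$ of degree $l$. The main obstacle is the spherical-coordinate reduction of the second boundary operator, namely the identity recasting $\mathrm{div}_{\partial B}(D^2u\cdot\nu)_{\partial B}$ as $\Delta_S(\partial_r u - u/r)$ on $\partial B$; once this is available, all remaining steps are routine computations with Euler-type radial operators and orthogonality of spherical harmonics.
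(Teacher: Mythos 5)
Your route is essentially the paper's: reduce to the representation $u_l=(A_lr^{l}+B_lr^{l+2})H_l(\theta)$, rewrite the boundary operators on the sphere as in \eqref{BC2}, substitute, and read off a $2\times 2$ homogeneous system mode by mode. Your boundary computations are correct (in particular $\Delta(r^{l+2}H_l)=2(2l+N)r^{l}H_l$, the values of $\partial_r u$, $\partial^2_r u$, $\partial_r u-u$ and $\partial_r\Delta u$ at $r=1$, and the simplification $l(l+N-2)(l+1)-2l(2l+N)=l\bigl(l(l-5)+N(l-1)-2\bigr)$), so the matrix you obtain is exactly \eqref{matrix}; your sketch of the identity $-\mathrm{div}_{\partial B}(D^2u\cdot\nu)_{\partial B}=-\Delta_S(\partial_r u-u)$ on $\partial B$ is also sound. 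The only structural difference is that the paper outsources the two preliminary facts — smoothness of weak solutions and the representation \eqref{general_ball} of biharmonic functions on the ball — to \cite[\S 2]{gazzola} and \cite[\S 5--6]{buosoprovenzano}, whereas you derive the representation by separation of variables.

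In that derivation one justification is not correct as stated: you discard the radial solutions $r^{2-N-l}$, $r^{4-N-l}$ (and the logarithmic ones in the degenerate cases) because ``$u\in H^2(B)$, and in particular regularity at the origin, rules them out''. This fails for some low modes: for $N=3$, $l=0$ the rejected solution $r^{4-N-l}=r$ gives $|x|$, which does belong to $H^2(B)$; for $N=2$, $l=0$ the indicial roots degenerate and the extra solution $r^2\log r$ also lies in $H^2(B)$. These functions must be excluded not by Sobolev membership but by the equation itself: they are biharmonic only on $B\setminus\{0\}$ and satisfy $\Delta^2u=c\,\delta_0\neq 0$ distributionally, hence are not weak solutions on $B$. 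The clean fix is the one the paper implicitly uses: invoke interior elliptic regularity, so that any weak solution is smooth (real-analytic) at the origin, and then note that smoothness at $0$ forces the radial coefficient $f_{l,m}(r)=\int_{\partial B}u(r\theta)Y_{l,m}(\theta)\,d\theta$ to admit a Taylor expansion containing only powers $r^{k}$ with $k\geq l$ and $k\equiv l\ (\mathrm{mod}\ 2)$; this eliminates all unwanted radial solutions uniformly in $N$ and $l$, including the exceptional cases above. With that correction your argument is complete and coincides with the paper's proof.
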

\begin{proof}
It is well-known that the weak solutions of $\Delta^2u=0$ in the unit ball complemented with \eqref{BC} are smooth (see e.g., \cite[\S2]{gazzola}). Moreover, we recall that any function $u$ satisfying $\Delta^2u=0$ on the unit ball $B$ along with the two homogeneous boundary conditions \eqref{BC2} can be written in spherical coordinates in the form
\begin{equation}\label{general_ball}
u_l(r,\theta)=(A r^l+B r^{l+2})H_l(\theta),
\end{equation}
for $l\in\mathbb N_0$, where $A,B\in\mathbb R$ are arbitrary constants and $H_l(\theta)$ is a spherical harmonic of degree $l$ in $\mathbb R^N$ (see e.g., \cite[\S5-6]{buosoprovenzano} for details). By using the explicit form \eqref{general_ball} in \eqref{BC2}, we find that the constants $A,B$ need to satisfy a  homogeneous system of two linear equations, whose associated matrix is given by \eqref{matrix}. Hence a non-trivial solution exists if and only if the determinant of \eqref{matrix} is zero. The rest of the statement is now a straightforward consequence.
\end{proof}

By Theorem \ref{fundamental-2} we immediately deduce the following characterization of the eigenvalues of \eqref{BSM} and \eqref{BSL}. Here by $m_l$ we denote the dimension of the space of the spherical harmonics of degree $l\in\mathbb N_0$ in $\mathbb R^N$, that is
$$
m_l=\frac{(2l+N-2)(l+N-3)!}{l!(N-2)!}.
$$

\begin{cor}\label{eigenvalues_ball}
Any eigenvalue $\lambda(\mu)$ of \eqref{BSM} on $B$ satisfies the equation
\begin{multline}\label{ball_LM}
\lambda(\mu)\left(2l^3+(N-1)l^2-(N-2)l-\mu\right)\\
=\left(3l^4+2(N-2)l^3-(N+1)l^2-(N-2)l-(1+2l)\mu\right),
\end{multline}
for some $l\in\mathbb N_0$. Any eigenvalue $\mu(\lambda)$ of \eqref{BSL} on $B$ satisfies the equation
\begin{multline}\label{ball_ML}
\mu(\lambda)\left(2l+1-\lambda\right)\\
=\left(3l^4+2(N-2)l^3-(N+1)l^2-(N-2)l-\left(2l^3+(N-1)l^2-(N-2)l\right)\lambda\right),
\end{multline}
for some $l\in\mathbb N_0$. The multiplicity of $\mu(\lambda)$ and $\lambda(\mu)$ corresponding to an index $l\in\mathbb N_0$ equals the dimension $m_l$ of the space of the spherical harmonics of degree $l$ in $\mathbb R^N$.
\end{cor}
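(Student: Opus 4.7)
My plan is to deduce the corollary directly from Theorem \ref{fundamental-2} by expanding the $2\times 2$ determinant $\det M_l(\lambda,\mu)$. With $\sigma=0$, the boundary conditions in \eqref{BSM} and \eqref{BSL} are precisely \eqref{BC} (with the free parameter of the corresponding problem playing the role of $\lambda$ or $\mu$), so the eigenvalues of both problems are exactly those pairs $(\lambda,\mu)$ for which $\det M_l(\lambda,\mu)=0$ for some $l\in\mathbb N_0$. It therefore remains only to rewrite the vanishing condition in two ways: once solved for $\lambda$ with $\mu$ regarded as the fixed parameter, once solved for $\mu$ with $\lambda$ as the fixed parameter.

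The computation is clean because $M_l$ has a very special structure: its first row is affine in $\lambda$ and free of $\mu$, while its second row is affine in $\mu$ and free of $\lambda$. Consequently $\det M_l(\lambda,\mu)$ is forced to be a bilinear polynomial
\begin{equation*}
\det M_l(\lambda,\mu)=A(l)+B(l)\lambda+C(l)\mu+D(l)\lambda\mu,
\end{equation*}
with polynomial coefficients in $l$ and $N$. Setting $p(l):=l(l(l-5)+N(l-1)-2)$ and $q(l):=l(l+N-2)(l-1)$, a direct expansion immediately gives
\begin{equation*}
C(l)=(l+2)(l+1)-l(l-1)=4l+2,\qquad D(l)=l-(l+2)=-2,
\end{equation*}
together with $B(l)=(l+2)q(l)-lp(l)$ and $A(l)=l(l-1)p(l)-(l+2)(l+1)q(l)$. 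A routine algebraic simplification yields $\tfrac{1}{2}B(l)=2l^3+(N-1)l^2-(N-2)l$ and $-\tfrac{1}{2}A(l)=3l^4+2(N-2)l^3-(N+1)l^2-(N-2)l$. Dividing the identity $A+B\lambda+C\mu+D\lambda\mu=0$ by $2$ and isolating $\lambda$ produces exactly \eqref{ball_LM}, while isolating $\mu$ produces exactly \eqref{ball_ML}.

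The multiplicity assertion follows at once from the normal form $u_l(r,\theta)=(A_lr^l+B_lr^{l+2})H_l(\theta)$ supplied by Theorem \ref{fundamental-2}: once $l$ and the eigenvalue pair are fixed, the radial factor is determined up to a scalar by the one-dimensional kernel of $M_l(\lambda,\mu)$, whereas the angular factor $H_l$ ranges freely over the $m_l$-dimensional space of spherical harmonics of degree $l$ on the unit sphere of $\mathbb R^N$. Since spherical harmonics of distinct degrees are $L^2$-orthogonal on the sphere, contributions associated with different indices $l$ are automatically linearly independent, so each fixed $l$ contributes an eigenspace of dimension exactly $m_l$. There is no real conceptual obstacle here; the only thing demanding care is the polynomial bookkeeping in the explicit computation of $A(l)$ and $B(l)$, which is purely mechanical once the bilinear structure of $\det M_l$ has been exploited.
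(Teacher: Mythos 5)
Your proposal is correct and follows essentially the same route as the paper, which deduces the corollary directly from Theorem \ref{fundamental-2} by expanding $\det M_l(\lambda,\mu)=0$ and rearranging; your explicit bookkeeping ($C(l)=4l+2$, $D(l)=-2$, $\tfrac12 B(l)=2l^3+(N-1)l^2-(N-2)l$, $-\tfrac12 A(l)=3l^4+2(N-2)l^3-(N+1)l^2-(N-2)l$) checks out and reproduces \eqref{ball_LM} and \eqref{ball_ML} exactly. The multiplicity argument via the normal form $(A_lr^l+B_lr^{l+2})H_l(\theta)$ and the $L^2$-orthogonality of spherical harmonics of distinct degrees is likewise the intended (implicit) reasoning of the paper.
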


By using similar arguments, one can easily prove that the eigenvalues of problems \eqref{DBS} and \eqref{NBS} on the unit ball $B$ can also be determined explicitly. 

\begin{thm}\label{NBS_DBS}
Any eigenvalue $\eta$ of problem \eqref{DBS} on $B$ is of the form
\begin{equation}\label{ball_eta}
\eta=2l+1
\end{equation}
for some $l\in\mathbb N_0$ and its multiplicity equals the dimension $m_l$ of the space of spherical harmonics of degree $l$ in $\mathbb R^N$.

Any eigenvalue $\xi$ of problem \eqref{NBS} on $B$ is of the form
\begin{equation}\label{ball_xi}
\xi=l(2l^2+(N-1)l-N+2),
\end{equation}
for some $l\in\mathbb N_0$ and its multiplicity equals the dimension $m_l$ of the space of spherical harmonics of degree $l$ in $\mathbb R^N$.
\end{thm}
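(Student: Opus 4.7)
The proof will follow the same strategy used for Theorem \ref{fundamental-2}. Any biharmonic function on $B$ that satisfies a pair of rotation-invariant boundary conditions decomposes into spherical-harmonic modes of the form $u_l(r,\theta) = (Ar^l + B r^{l+2})H_l(\theta)$, where $H_l$ is a spherical harmonic of degree $l \in \mathbb N_0$. Substituting this ansatz into the two homogeneous boundary conditions yields, for each $l$, a $2\times 2$ homogeneous linear system in the unknowns $(A,B)$; a non-trivial solution exists precisely when the determinant vanishes, giving the eigenvalue equation. Spectral completeness (i.e., the fact that these are all the eigenvalues) follows from Theorems \ref{thm_DBS} and \ref{thm_NBS}, which guarantee a Hilbert basis of eigenfunctions; and the multiplicity associated with a given index $l$ equals $m_l$, the dimension of the space of spherical harmonics of degree $l$ in $\mathbb R^N$.

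\textbf{The \eqref{DBS} case.} With $\sigma=0$ the boundary conditions reduce to $u_{|r=1}=0$ and $\frac{\partial^2 u}{\partial r^2}_{|r=1}=\eta\frac{\partial u}{\partial r}_{|r=1}$. The first condition gives $A+B=0$, so $B=-A$. A direct differentiation yields $\frac{\partial u_l}{\partial r}_{|r=1}=-2AH_l$ and $\frac{\partial^2 u_l}{\partial r^2}_{|r=1}=-2A(2l+1)H_l$, and inserting these into the Steklov condition gives $\eta=2l+1$. Note that the choice $l=0$ corresponds to the eigenfunction $u(r)=1-r^2$ with $\eta=1$, which is legitimate.

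\textbf{The \eqref{NBS} case.} With $\sigma=0$ the boundary conditions are $\frac{\partial u}{\partial r}_{|r=1}=0$ and $-\frac{1}{r^2}\Delta_S\bigl(\frac{\partial u}{\partial r}-\frac{u}{r}\bigr)_{|r=1}-\frac{\partial \Delta u}{\partial r}_{|r=1}=\xi u_{|r=1}$. The first gives $lA+(l+2)B=0$: for $l=0$ this forces $B=0$, so $u$ is constant and $\xi=0$ (with multiplicity $m_0=1$); for $l\ge 1$ set $B=-\frac{l}{l+2}A$. The main computational step is to use the identity $\Delta(r^kH_l)=(k(k+N-2)-l(l+N-2))r^{k-2}H_l$ (which follows from $\Delta_S H_l=-l(l+N-2)H_l$) to evaluate $\Delta u_l$ and $\frac{\partial \Delta u_l}{\partial r}_{|r=1}$, as well as $\bigl(\frac{\partial u_l}{\partial r}-\frac{u_l}{r}\bigr)_{|r=1}$; substituting into the second Steklov condition and simplifying gives exactly $\xi=l(2l^2+(N-1)l-N+2)$.

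\textbf{Main obstacle.} The only real obstacle is the bookkeeping in the \eqref{NBS} computation: correctly evaluating $\Delta u_l$, its radial derivative, and the tangential divergence contribution $\frac{1}{r^2}\Delta_S(\frac{\partial u}{\partial r}-\frac{u}{r})$ at $r=1$ on the mixed basis function $Ar^l+Br^{l+2}$, and then collecting terms into the compact polynomial $l(2l^2+(N-1)l-N+2)$. No conceptual novelty beyond Theorem \ref{fundamental-2} is required; indeed, one could equivalently derive both eigenvalue equations as degenerate limits of the determinant of $M_l(\lambda,\mu)$ in \eqref{matrix}, by intersecting the curve $\det M_l(\lambda,\mu)=0$ with the line $\lambda=0$ (for \eqref{NBS}) and with the condition corresponding to $u_{|\partial B}=0$ (for \eqref{DBS}).
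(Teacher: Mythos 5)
Your main argument is correct and is essentially the paper's own proof: the paper proves Theorem \ref{NBS_DBS} by "similar arguments" to Theorem \ref{fundamental-2}, i.e. the ansatz $u_l=(Ar^l+Br^{l+2})H_l(\theta)$ (justified by regularity and the representation of biharmonic functions in the ball), substitution into the $\sigma=0$ boundary conditions, and the resulting $2\times 2$ homogeneous systems, which yield $\eta=2l+1$ and $\xi=l(2l^2+(N-1)l-N+2)$ exactly as you compute, with multiplicity $m_l$. One caution: your closing aside is wrong for \eqref{NBS} — setting $\lambda=0$ in $M_l(\lambda,\mu)$ keeps the first row as $l(l-1)A+(l+2)(l+1)B=0$, i.e. $\partial_r^2u=0$, not $\partial_r u=0$; the correct "degenerate" reading is to replace the first row of $M_l$ by $(l,\,l+2)$ (equivalently, $\xi_{(l)}$ is the $\lambda\to-\infty$ horizontal asymptote of $\mu_{(l)}(\lambda)$, cf. \eqref{ball_ML_2}), but since your actual computation uses the correct condition $lA+(l+2)B=0$, this does not affect the proof.
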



By combining Corollary \ref{eigenvalues_ball} and Theorem \ref{NBS_DBS} we can prove the following statement, where $\lambda_{(l)}(\mu)$, $\mu_{(l)}(\lambda)$, $\eta_{(l)}$ and $\xi_{(l)}$ denote the eigenvalues of \eqref{BSM}, \eqref{BSL}, \eqref{DBS} and \eqref{NBS}, respectively, associated with spherical harmonics of order $l\in\mathbb N_0$.

\begin{thm}
For all $l\in\mathbb N$ and $\mu\ne\xi_{(l)}$
\begin{equation}\label{ball_LM_2}
\lambda_{(l)}(\mu)=\frac{\left(3l^4+2(N-2)l^3-(N+1)l^2-(N-2)l-\eta_{(l)}\mu\right)}{\left(\xi_{(l)}-\mu\right)}.
\end{equation}
For all $l\in\mathbb N$ and $\lambda\ne\eta_{(l)}$
\begin{equation}\label{ball_ML_2}
\mu_{(l)}(\lambda)=\frac{\left(3l^4+2(N-2)l^3-(N+1)l^2-(N-2)l-\xi_{(l)}\lambda\right)}{\left(\eta_{(l)}-\lambda\right)}.
\end{equation}
Moreover, $\lambda_{(0)}(\mu)=\eta_{(0)}=1$ for all $\mu\in\mathbb R$ and $\mu_{(0)}(\lambda)=\xi_{(0)}=0$  for all $\lambda\in\mathbb R$.
\end{thm}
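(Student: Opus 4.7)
The plan is to derive both formulas by direct algebraic manipulation of the polynomial identities established in Corollary \ref{eigenvalues_ball}, combined with the explicit expressions for $\eta_{(l)}$ and $\xi_{(l)}$ provided by Theorem \ref{NBS_DBS}. The key observation is that the coefficients appearing in \eqref{ball_LM} and \eqref{ball_ML} are precisely $\eta_{(l)}$ and $\xi_{(l)}$ in disguise.

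Explicitly, I would first record that Theorem \ref{NBS_DBS} gives
$$\eta_{(l)}=2l+1\quad\text{and}\quad \xi_{(l)}=l\bigl(2l^{2}+(N-1)l-N+2\bigr)=2l^{3}+(N-1)l^{2}-(N-2)l\, .$$
Inserting these on the left-hand side of \eqref{ball_LM}, the factor multiplying $\lambda(\mu)$ is exactly $\xi_{(l)}-\mu$, and on the right-hand side the coefficient of $-\mu$ is exactly $1+2l=\eta_{(l)}$. Hence \eqref{ball_LM} rewrites as
$$\lambda_{(l)}(\mu)\bigl(\xi_{(l)}-\mu\bigr)=3l^{4}+2(N-2)l^{3}-(N+1)l^{2}-(N-2)l-\eta_{(l)}\mu,$$
and dividing by $\xi_{(l)}-\mu\ne 0$ yields \eqref{ball_LM_2}. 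The derivation of \eqref{ball_ML_2} is entirely parallel: in \eqref{ball_ML} the left-hand factor is $2l+1-\lambda=\eta_{(l)}-\lambda$, while the coefficient of $-\lambda$ on the right is precisely $\xi_{(l)}$, so we can solve for $\mu_{(l)}(\lambda)$ under the non-vanishing hypothesis $\lambda\ne\eta_{(l)}$.

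For the case $l=0$, I would return directly to the matrix $M_{0}(\lambda,\mu)$ supplied by Theorem \ref{fundamental-2}. Specializing $l=0$ in \eqref{matrix} gives
$$M_{0}(\lambda,\mu)=\begin{pmatrix} 0 & 2(1-\lambda) \\ -\mu & -\mu \end{pmatrix},$$
whose determinant equals $2\mu(1-\lambda)$. Thus a non-trivial radial ($l=0$) solution exists exactly when $\lambda=1$ (for any $\mu\in\mathbb R$) or $\mu=0$ (for any $\lambda\in\mathbb R$), which is precisely the assertion $\lambda_{(0)}(\mu)=\eta_{(0)}=1$ for all $\mu\in\mathbb R$ and $\mu_{(0)}(\lambda)=\xi_{(0)}=0$ for all $\lambda\in\mathbb R$.

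The argument will be essentially a bookkeeping exercise; there is no genuine obstacle. The only step that requires attention is recognizing the precise coincidence between the rational functions of $l,N$ appearing in \eqref{ball_LM}-\eqref{ball_ML} and the eigenvalues $\eta_{(l)},\xi_{(l)}$ from Theorem \ref{NBS_DBS} — once this identification is verified (by direct expansion, as above), the rest of the proof is immediate division and, for $l=0$, a short inspection of the $2\times 2$ determinant.
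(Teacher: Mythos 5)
Your proposal is correct and follows essentially the same route as the paper: rewriting \eqref{ball_LM} and \eqref{ball_ML} by recognizing the factors $\xi_{(l)}-\mu$ and $\eta_{(l)}-\lambda$ and the coefficients $\eta_{(l)}=2l+1$, $\xi_{(l)}=2l^{3}+(N-1)l^{2}-(N-2)l$ from Theorem \ref{NBS_DBS}, then dividing, and handling $l=0$ via the determinant condition ${\rm det}\,M_{0}(\lambda,\mu)=0$, which reduces to $\mu(\lambda-1)=0$ exactly as in your computation. No gaps.
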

\begin{proof}
According to the change of notation for eigenvalues, we  denote by $\lambda_{(l)}(\mu)$, $\mu_{(l)}(\lambda)$, $\eta_{(l)}$ and $\xi_{(l)}$  the eigenvalues of problems \eqref{BSM}, \eqref{BSL}, \eqref{DBS} and \eqref{NBS} corresponding to the choice of $l\in\mathbb N_0$ in \eqref{ball_LM}, \eqref{ball_ML}, \eqref{ball_eta} and \eqref{ball_xi}. Each of such eigenvalues has multiplicity $m_l$. 

We note that \eqref{ball_LM} and \eqref{ball_ML} can be rewritten as
\begin{equation}\label{ball_LM_1}
\lambda_{(l)}(\mu)\left(\xi_{(l)}-\mu\right)=\left(3l^4+2(N-2)l^3-(N+1)l^2-(N-2)l-\eta_{(l)}\mu\right)
\end{equation}
and
\begin{equation}\label{ball_ML_1}
\mu_{(l)}(\lambda)\left(\eta_{(l)}-\lambda\right)=\left(3l^4+2(N-2)l^3-(N+1)l^2-(N-2)l-\xi_{(l)}\lambda\right).
\end{equation}

If $l\in\mathbb N$, then from \eqref{ball_LM_1} and \eqref{ball_ML_1} we deduce the validity of \eqref{ball_LM_2} when $\mu\ne\xi_{(l)}$ and of \eqref{ball_ML_2} when $\lambda\ne \eta_{(l)}$. If $l=0$, the condition ${\rm det}M_0(\lambda,\mu)=0$ can be written in the form $\mu(\lambda-1)=0$ which allows to conclude the proof.

\end{proof}

We note that
$$
\lim_{\mu\rightarrow-\infty}\lambda_{(l)}(\mu)=\eta_{(l)}
$$
and
$$
\lim_{\lambda\rightarrow-\infty}\mu_{(l)}(\lambda)=\xi_{(l)}
$$
for all $l\in\mathbb N_0$. This is coherent with Theorems \ref{thm-limit-eta} and \ref{thm-limit-xi}. Moreover,
$$
\lim_{\mu\rightarrow\xi_{(l)}^{\pm}}\lambda_{(l)}(\mu)=\pm\infty
$$
and
$$
\lim_{\lambda\rightarrow\eta_{(l)}^{\pm}}\mu_{(l)}(\lambda)=\pm\infty
$$
for all $l\in\mathbb N$.
We have shown that the branches of eigenvalues $\lambda_{(l)}(\mu)$ and $\mu_{(l)}(\lambda)$ are analytic functions of their parameters on $\mathbb R\setminus\{\xi_{(l)}\}_{l\in\mathbb N_0}$ and $\mathbb R\setminus\{\eta_{(l)}\}_{l\in\mathbb N_0}$ respectively. In particular, the branch $\lambda_{(l)}(\mu)$ is a equilateral hyperbole with $\eta_{(l)}$ as horizontal asymptote and $\xi_{(l)}$ as vertical asymptote, if $l\geq 1$, while it is coincides with $\left\{(\mu,1):\mu\in\mathbb R\right\}$ for $l=0$. The branch $\mu_{(l)}(\lambda)$ is a equilateral hyperbole with $\xi_{(l)}$ as horizontal asymptote and $\eta_{(l)}$ as vertical asymptote, if $l\geq 1$, while it is coincides with $\left\{(\lambda,0):\mu\in\mathbb R\right\}$ for $l=0$. The situation is illustrated in Figure \ref{fig1}.
\begin{figure}[H]
\centering
\includegraphics[width=\textwidth]{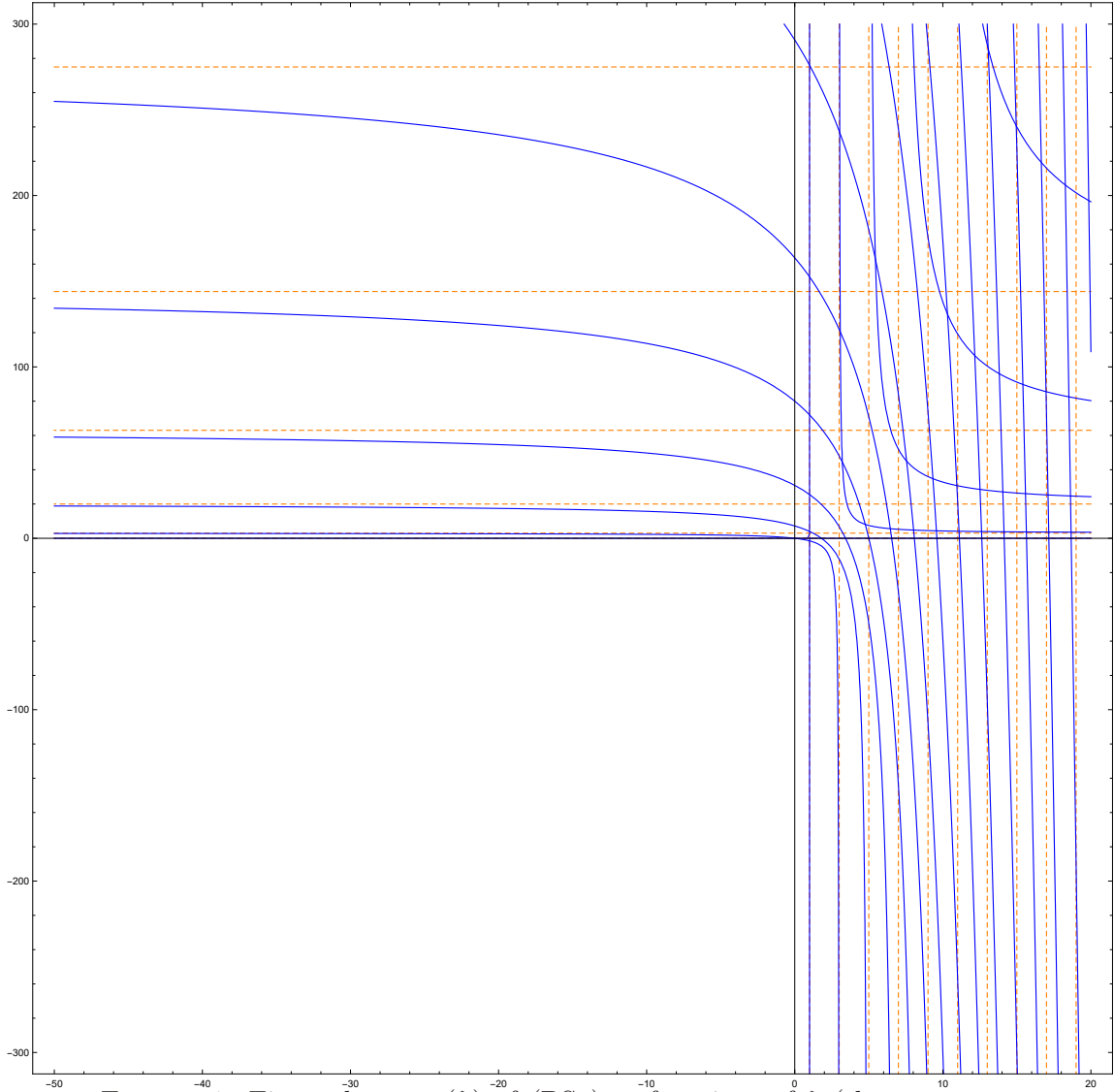}
\caption{Eigenvalues $\mu_{(l)}(\lambda)$ of \eqref{BSL} as functions of $\lambda$ (the parameter $\lambda$ correspond to the abscissa). Vertical asymptotes are the eigenvalues $\eta_{(l)}$ of \eqref{DBS}. Horizontal asymptotes are the eigenvalues $\xi_{(l)}$ of \eqref{NBS}. A reflection along the angle bisector of the first and third quadrant gives the eigenvalues $\lambda_{(l)}(\mu)$ of \eqref{BSM} as functions of $\mu$.}
\label{fig1}
\end{figure}




\section{Asymptotic formulas}\label{asymptotic}

It is proved in \cite{liu_2,liu_1} that if $\Omega$ is a bounded domain in $\mathbb R^N$ with $C^{\infty}$ boundary, then the eigenvalues of problems \eqref{DBS} and \eqref{NBS}  with $\sigma=1$ satisfy the following asymptotic laws 
\begin{equation}\label{weyl_DBS}
\eta_j\sim \frac{4\pi}{\omega_{N-1}^{\frac{1}{N-1}}}\left(\frac{j}{|\partial\Omega|}\right)^{\frac{1}{N-1}},
\end{equation}
and
\begin{equation}\label{weyl_NBS}
\xi_j\sim \frac{16\pi^3}{\omega_{N-1}^{\frac{3}{N-1}}}\left(\frac{j}{|\partial\Omega|}\right)^{\frac{3}{N-1}},
\end{equation}
as $j\rightarrow\infty$. Here $\omega_{N-1}$ denotes the volume of the unit ball in $\mathbb R^{N-1}$. 

We state the following theorem, whose proof is omitted since it follows exactly the same lines as those of Theorems 1.1 and 1.2 of \cite{liu_1}.
\begin{thm}
Let $\Omega$ be a bounded domain in $\mathbb R^N$ of class $C^{\infty}$. Then formulas \eqref{weyl_DBS} and \eqref{weyl_NBS} hold for all $\sigma\in\big(-\frac{1}{N-1},1\big)$. Moreover, the two following asymptotic formulas hold for the eigenvalues of problems \eqref{BSM} and \eqref{BSL}
\begin{equation}\label{weyl_BSM}
\lambda_j(\mu)\sim \frac{3\pi}{\omega_{N-1}^{\frac{1}{N-1}}}\left(\frac{j}{|\partial\Omega|}\right)^{\frac{1}{N-1}},
\end{equation}
and
\begin{equation}\label{weyl_BSL}
\mu_j(\lambda)\sim \frac{12\pi^3}{\omega_{N-1}^{\frac{3}{N-1}}}\left(\frac{j}{|\partial\Omega|}\right)^{\frac{3}{N-1}}
\end{equation}
as $j\rightarrow\infty$.
\end{thm}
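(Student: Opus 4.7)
The plan is to follow the pseudodifferential operator (PsDO) approach used in \cite{liu_1}. Since $\Omega$ is of class $C^{\infty}$, the boundary $\partial\Omega$ is a smooth compact manifold of dimension $N-1$. Each of the four eigenvalue problems can be reduced to a spectral problem on $\partial\Omega$ for a self-adjoint elliptic PsDO, to which H\"ormander's Weyl asymptotic law for compact self-adjoint elliptic PsDOs on a closed manifold applies.

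For problem \eqref{weak_DBS}, I would exploit the fact that any biharmonic function $v\in H^2(\Omega)$ with $\gamma_0(v)=0$ is uniquely determined by $g:=\gamma_1(v)$. This defines a Poisson-type extension $g\mapsto v$, and the composition $A_D:g\mapsto \bigl((1-\sigma)\partial^2 v/\partial\nu^2+\sigma\Delta v\bigr)|_{\partial\Omega}$ is a self-adjoint positive elliptic PsDO of order $1$ on $\partial\Omega$. Its principal symbol is computed via a half-space model: after freezing coefficients and taking the tangential Fourier transform, a decaying biharmonic function has the form $\hat v(x_N,\xi)=(A+Bx_N)e^{|\xi|x_N}$, from which the principal symbol of $A_D$ turns out to be $2|\xi|$, independent of $\sigma$. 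Weyl's law then yields \eqref{weyl_DBS}. Analogously, for \eqref{weak_NBS}, parametrizing biharmonic functions with $\gamma_1(v)=0$ by $f:=\gamma_0(v)$ produces a self-adjoint positive elliptic PsDO $A_N$ of order $3$ with principal symbol $2|\xi|^3$, yielding \eqref{weyl_NBS}.

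For \eqref{BSM} and \eqref{BSL}, a biharmonic $v$ is parametrized by its full Cauchy datum $(f,g)=(\gamma_0(v),\gamma_1(v))$. Setting $N_1(v):=(1-\sigma)\partial^2 v/\partial\nu^2+\sigma\Delta v$ and $N_0(v):=-(1-\sigma)\mathrm{div}_{\partial\Omega}(D^2v\cdot\nu)_{\partial\Omega}-\partial\Delta v/\partial\nu$, the map $(f,g)\mapsto(N_1(v),N_0(v))$ is a matrix PsDO $\mathcal{P}$ on $\partial\Omega$, whose block entries have orders $2$, $1$, $3$, $2$ respectively and can be computed explicitly from the half-space model. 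For \eqref{BSM} with $\mu$ fixed, the equation $N_0(v)=\mu v$ is a PsDO relation of order $3$ in which the term $\mu v$ is a compact perturbation of the principal part, and it can be inverted to express $f$ as a PsDO of non-positive order acting on $g$. Substituting into $N_1(v)=\lambda g$ yields a self-adjoint elliptic eigenvalue problem of order $1$ for $g$ on $\partial\Omega$; a Schur-complement computation gives its principal symbol, and Weyl's law then produces \eqref{weyl_BSM}. The argument for \eqref{BSL} is symmetric, with the roles of $f$ and $g$ interchanged: the relation $N_1(v)=\lambda g$ expresses $g$ as a PsDO acting on $f$, and substituting into $N_0(v)=\mu f$ produces a self-adjoint elliptic eigenvalue problem of order $3$ for $f$, leading to \eqref{weyl_BSL}.

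The main technical obstacles will be (a) the rigorous construction of the Poisson-type extension operators as PsDOs of the correct order, via a parametrix of the biharmonic Dirichlet problem on the smooth domain $\Omega$; (b) the verification that the parameter-dependent terms (involving $\mu$ or $\lambda$) and the curvature-induced subprincipal contributions on $\partial\Omega$ do not modify the leading Weyl coefficient; and (c) the correct Schur-complement extraction of the principal symbol of the reduced operators for \eqref{BSM} and \eqref{BSL}. Once these points are set up following \cite{liu_1}, the Weyl asymptotic formula $N(t)\sim (2\pi)^{-(N-1)}\mathrm{Vol}\{(x',\xi)\in T^*\partial\Omega:p_{\mathrm{pr}}(x',\xi)\le t\}$ together with inversion of the counting function $j=N(t_j)$ yields the stated asymptotics for $\eta_j$, $\xi_j$, $\lambda_j(\mu)$ and $\mu_j(\lambda)$.
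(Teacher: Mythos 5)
Your overall strategy is the one the paper itself intends: the proof is omitted in the paper precisely because it is meant to follow the pseudodifferential reduction of Theorems 1.1 and 1.2 of \cite{liu_1} (boundary reduction to a self-adjoint elliptic operator on $\partial\Omega$ plus the Weyl law), and your symbol computations for \eqref{weak_DBS} and \eqref{weak_NBS} are correct: the reduced operators have principal symbols $2|\xi|$ and $2|\xi|^3$, independent of $\sigma$, which give the constants $4\pi$ and $16\pi^3$.

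The gap is in the step you leave as an assertion, namely that ``a Schur-complement computation gives its principal symbol, and Weyl's law then produces \eqref{weyl_BSM}'' (and likewise \eqref{weyl_BSL}). This is exactly where the constants $3\pi$ and $12\pi^3$ must come from, and if you carry out the computation in your own half-space model it does not give them for general $\sigma$. With $\hat v(x_N,\xi)=(A+Bx_N)e^{|\xi|x_N}$ one has $\gamma_0(v)=A$, $\gamma_1(v)=A|\xi|+B$, $N_1(v)=2|\xi|B+(1-\sigma)|\xi|^2A$, $N_0(v)=(1-\sigma)|\xi|^3A-(1+\sigma)|\xi|^2B$. For \eqref{BSM} the constraint $N_0(v)=\mu\gamma_0(v)$ forces, at leading order, $B=\tfrac{1-\sigma}{1+\sigma}|\xi|A$, and the reduced operator $g=\gamma_1(v)\mapsto N_1(v)$ then has principal symbol $\tfrac{(1-\sigma)(3+\sigma)}{2}|\xi|$; the same factor $\tfrac{(1-\sigma)(3+\sigma)}{2}$ appears for \eqref{BSL} in front of $|\xi|^3$. (A variational cross-check: minimizing $\mathcal Q_{\sigma}$ in the half-space model with $\gamma_1(v)$ prescribed gives exactly $\tfrac{(1-\sigma)(3+\sigma)}{2}|\xi|\,|\gamma_1(v)|^2$, strictly below the \eqref{DBS} value $2|\xi|\,|\gamma_1(v)|^2$.) This equals $\tfrac32|\xi|$ --- hence yields $3\pi$ and $12\pi^3$ --- only at $\sigma=0$, which is also the only case verified explicitly in the paper (the ball computations of Appendix \ref{ball} are for $\sigma=0$, where indeed $\lambda_{(l)}(\mu)\sim\tfrac32 l$ and $\mu_{(l)}(\lambda)\sim\tfrac32 l^3$). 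The $\sigma$-cancellation you correctly observe for \eqref{DBS} and \eqref{NBS} occurs because there the constraint ($\gamma_0(v)=0$, resp.\ $\gamma_1(v)=0$) is $\sigma$-independent; for \eqref{BSM} and \eqref{BSL} the constraint is the $\sigma$-dependent natural boundary condition, and the dependence survives in the principal symbol. So, as written, your proposal does not establish \eqref{weyl_BSM}--\eqref{weyl_BSL} for all $\sigma\in\big(-\frac{1}{N-1},1\big)$: you must either exhibit a mechanism that removes the factor $\tfrac{(1-\sigma)(3+\sigma)}{2}$ (the Schur complement, done carefully, does not), or conclude that your argument proves the formulas with this $\sigma$-dependent leading coefficient, reducing to the stated constants when $\sigma=0$. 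A further, smaller point: define the reduced operator through the constrained extension, so that $(R_\mu g_1,g_2)_{\partial\Omega}=\mathcal Q_{\mu,D}(v_1,v_2)$ by the Green formula \eqref{biharmonc_green}; this is what makes the self-adjointness you invoke transparent, since the bare composition of the Schur-complement blocks does not by itself show it.
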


We note  that the principal term in the asymptotic expansions of the eigenvalues depends neither on the Poisson's ratio $\sigma$ nor on $\mu$ or $\lambda$. However, lower order terms have to depend on $\mu$ and $\lambda$, since, as $\mu,\lambda\rightarrow-\infty$, $\lambda_j(\mu)\rightarrow\eta_j$ and $\mu_j(\lambda)\rightarrow\xi_j$, and asymptotic formulas of $\lambda_j(\mu)$ and $\eta_j$, and of $\mu_j(\lambda)$ and $\xi_j$, differ from a factor $\frac{3}{4}$.

\begin{rem}
We remark that the approach used in \cite{liu_1} requires that the boundary of $\Omega$ is of class $C^{\infty}$. However, as proved by another technique in \cite{liu_2}, the asymptotic formulas for the eigenvalues of \eqref{DBS} and \eqref{NBS} when $\sigma=1$ hold when $\Omega$ is of class $C^{2}$.
\end{rem}

We now show that in the case of the unit ball $B$ in $\mathbb R^N$ it is possible to recover formulas \eqref{weyl_DBS}, \eqref{weyl_NBS}, \eqref{weyl_BSM} and \eqref{weyl_BSL} by using the explicit computations in Appendix \ref{ball}.

Note that for a fixed $l\in\mathbb N_0$, the dimension of the space of spherical harmonics of degree less or equal than $l$ is $\frac{(2l+N-1)(N+l-2)!}{l!(N-1)!}$. By  \eqref{ball_xi} we deduce that
\begin{equation}\label{asymptotic_xi_k}
\xi_j=l(2l^2+(N-1)l-N+2)
\end{equation}
whenever $j\in\mathbb N$ is such that
\begin{equation}\label{kl_xi}
\frac{(2l+N-3)(N+l-3)!}{(l-1)!(N-1)!}<j\leq\frac{(2l+N-1)(N+l-2)!}{l!(N-1)!}
\end{equation}
Moreover,
\begin{multline*}
\lim_{l\rightarrow+\infty}\frac{(2l+N-3)(N+l-3)!}{(l-1)!(N-1)!}\div\frac{2l^{N-1}}{(N-1)!}\\
=\lim_{l\rightarrow+\infty}\frac{(2l+N-1)(N+l-2)!}{l!(N-1)!}\div\frac{2l^{N-1}}{(N-1)!}=1.
\end{multline*}
From \eqref{asymptotic_xi_k} and \eqref{kl_xi} we deduce that
$$
\xi_{j}\sim 2^{\frac{N-4}{N-1}}(N-1)!^{\frac{3}{N-1}}j^{\frac{3}{N-1}}\,,\ \ \ {\rm as\ }j\rightarrow+\infty.
$$
We note that this is exactly \eqref{weyl_NBS}. Indeed, recalling that $|\partial B|=N\omega_N$, a standard computation  shows that 
$$
2^{\frac{N-4}{N-1}}(N-1)!^{\frac{3}{N-1}}=\frac{16\pi^3}{\omega_{N-1}^{\frac{3}{N-1}}}\cdot\frac{1}{|\partial\Omega|^{\frac{3}{N-1}}},
$$
for which it is useful to note that 
$$
\omega_N\omega_{N-1}=\frac{2^N\pi^{N-1}}{N!}.
$$
In the same way we verify that
\begin{equation*}
\eta_j\sim 2^{\frac{N-2}{N-1}}(N-1)!^{\frac{1}{N-1}}j^{\frac{1}{N-1}},
\end{equation*}
\begin{equation*}
\lambda_j(\mu)\sim \frac{3}{4}2^{\frac{N-2}{N-1}}(N-1)!^{\frac{1}{N-1}}j^{\frac{1}{N-1}},
\end{equation*}
and
\begin{equation*}
\mu_j(\lambda)\sim \frac{3}{4}2^{\frac{N-4}{N-1}}(N-1)!^{\frac{3}{N-1}}j^{\frac{3}{N-1}},
\end{equation*}
as $j\rightarrow+\infty$, and these asymptotic formulas correspond to formulas \eqref{ball_eta}, \eqref{ball_LM} and \eqref{ball_ML}.





\section{The \eqref{BSM} and \eqref{BSL} problems for $\mu>0$ and $\lambda>\eta_1$}\label{appendixA}

In this section we briefly discuss problem \eqref{BSM} for $\mu>0$ and problem \eqref{BSL} for $\lambda>\eta_1$. 

We begin with problem \eqref{BSM}. Assume that $\mu\in\mathbb R$ is such that $\xi_j<\mu<\xi_{j+1}$ for some $j\in\mathbb N$. Recall that $\xi_j$ denote the eigenvalues of problem \eqref{NBS}. We denote by $U_j$ the subspace of $H^2(\Omega)$ generated by all eigenfunctions $u_i$ associated with the eigenvalues $\xi_i$ with $i\leq j$ and we set
$$
U_j^{\perp}=\left\{u\in H^2(\Omega):\mathcal Q_{\mu,D}(u,\varphi)=0\,,\ \ \ \forall\varphi\in U_j\right\}.
$$
The space $U_j^{\perp}$ is a closed subspace of $H^2(\Omega)$. We have the following result.

\begin{thm}\label{thm_mu_xi}
Let $\Omega$ be a bounded domain in $\mathbb R^N$ of class $C^{0,1}$ and assume that $\xi_j<\mu<\xi_{j+1}$ for some $j\in\mathbb N$. Then
\begin{equation}\label{split2}
H^2(\Omega)=U_j\oplus U_j^{\perp}.
\end{equation}
Moreover, there exists $b\geq 0$ such that the quadratic form $\mathcal Q_{\mu,D}(u,v)+b(\gamma_1(u),\gamma_1(v))_{\partial\Omega}$, $u,v\in H^2(\Omega)$, is coercive on $U_j^{\perp}$. 
\end{thm}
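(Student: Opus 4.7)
The plan is to deduce the decomposition \eqref{split2} from the non-degeneracy of $\mathcal{Q}_{\mu,D}$ on the finite-dimensional space $U_j$, and to obtain the coercivity statement by a compactness/contradiction argument that exploits the spectral gap $\mu<\xi_{j+1}$.

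First, I would establish that $\mathcal{Q}_{\mu,D}$ is \emph{negative definite} on $U_j$. The eigenfunctions $u_1,\dots,u_j$ of \eqref{weak_NBS} are pairwise orthogonal in both $\mathcal{Q}_{\sigma}$ and $(\gamma_0\cdot,\gamma_0\cdot)_{\partial\Omega}$: the eigenvalue equation together with the $L^2(\partial\Omega)$-orthonormality of the family $\{\hat u_i\}$ from Theorem \ref{thm_NBS} gives $\mathcal{Q}_{\sigma}(u_i,u_k)=\delta_{ik}$ and $\|\gamma_0 u_i\|^2_{L^2(\partial\Omega)}=1/\xi_i$ for $i,k\geq 2$, while $u_1$ is the nonzero constant, $\mathcal{Q}_{\sigma}$-orthogonal to everything and $L^2(\partial\Omega)$-orthogonal to $\gamma_0 u_k$ for $k\geq 2$. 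Expanding $u=\sum_{i=1}^{j}c_i u_i$ yields
$$\mathcal{Q}_{\mu,D}(u,u)=-\mu c_1^2\|\gamma_0 u_1\|^2_{L^2(\partial\Omega)}+\sum_{i=2}^{j}c_i^2\Bigl(1-\frac{\mu}{\xi_i}\Bigr),$$
which is strictly negative on $U_j\setminus\{0\}$ because $\mu>\xi_j\geq\xi_i$ for $i\leq j$ (and $\xi_1=0$ gives the strictly negative first term via $\mu>0$). Non-degeneracy of $\mathcal{Q}_{\mu,D}|_{U_j\times U_j}$ then ensures that for each $u\in H^2(\Omega)$ the finite linear system $\mathcal{Q}_{\mu,D}(v,u_k)=\mathcal{Q}_{\mu,D}(u,u_k)$, $k=1,\dots,j$, has a unique solution $v\in U_j$; the splitting $u=v+(u-v)$ gives \eqref{split2}, and $U_j\cap U_j^{\perp}=\{0\}$ by the strict definiteness.

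The heart of the proof is coercivity. I would argue by contradiction: if no $b\geq 0$ works, then taking $b=n$ and Rayleigh quotients below $1/n$, one produces $u_n\in U_j^{\perp}$ with $\|u_n\|_{H^2(\Omega)}=1$ and $\mathcal{Q}_{\mu,D}(u_n,u_n)+n\|\gamma_1 u_n\|^2_{L^2(\partial\Omega)}<1/n$. A subsequence satisfies $u_n\rightharpoonup u_{\infty}$ in $H^2(\Omega)$, and the compactness of $\gamma_0,\gamma_1\colon H^2(\Omega)\to L^2(\partial\Omega)$ forces $\gamma_1 u_{\infty}=0$, so $u_{\infty}\in\mathcal{H}^2_{0,N}(\Omega)$. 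Since $U_j^{\perp}$ is defined by finitely many continuous linear conditions it is weakly closed, hence $u_{\infty}\in\mathcal{H}^2_{0,N}(\Omega)\cap U_j^{\perp}$. Weak lower semicontinuity of $\mathcal{Q}_{\sigma}$, combined with the strong convergence of $\gamma_0 u_n$, gives $\mathcal{Q}_{\mu,D}(u_{\infty},u_{\infty})\leq 0$.

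The decisive step is to show that $\mathcal{Q}_{\mu,D}$ is non-negative on $\mathcal{H}^2_{0,N}(\Omega)\cap U_j^{\perp}$ with $\{0\}$ as its only zero. Using the eigenvalue equation for \eqref{weak_NBS} one checks that this intersection equals $\{u\in\mathcal{H}^2_{0,N}(\Omega):(\gamma_0 u,\gamma_0 u_i)_{\partial\Omega}=0,\ i=1,\dots,j\}$, and the Min-Max principle for $\xi_{j+1}$ yields $\mathcal{Q}_{\sigma}(u,u)\geq \xi_{j+1}\|\gamma_0 u\|^2_{L^2(\partial\Omega)}$ on this space. Since $\mu>0$, this gives $\mathcal{Q}_{\mu,D}(u,u)\geq (1-\mu/\xi_{j+1})\,\mathcal{Q}_{\sigma}(u,u)\geq 0$, with equality forcing $\mathcal{Q}_{\sigma}(u,u)=0$, hence $u$ affine, hence (from $\gamma_1 u=0$) constant, hence (from the orthogonality condition against $u_1$ and $\mu\neq 0$) equal to $0$. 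Therefore $u_{\infty}=0$, so $\gamma_0 u_n\to 0$ in $L^2(\partial\Omega)$ and, by Rellich, $u_n\to 0$ in $L^2(\Omega)$. Combining this with $\mathcal{Q}_{\sigma}(u_n,u_n)=\mathcal{Q}_{\mu,D}(u_n,u_n)+\mu\|\gamma_0 u_n\|^2_{L^2(\partial\Omega)}\to 0$ and the equivalence of $\|u\|^2_{H^2(\Omega)}$ with $\mathcal{Q}_{\sigma}(u,u)+\|u\|^2_{L^2(\Omega)}$ contradicts $\|u_n\|_{H^2(\Omega)}=1$. The main technical hurdle will be the bookkeeping around the simple eigenvalue $\xi_1=0$, which prevents a uniform $\mathcal{Q}_{\sigma}$-normalization of the $u_i$ and which also appears crucially in ruling out the limit $u_{\infty}$ from being a nonzero constant.
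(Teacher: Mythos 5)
Your proof is correct, and it reaches the theorem by a route that is organized differently from the one the paper intends (the paper omits the argument, referring to an adaptation of \cite[Theorem 5.1]{lamberti_stratis}). For the splitting \eqref{split2} the reference argument produces the $U_j$-component by maximizing $u\mapsto\tfrac12\mathcal Q_{\mu,D}(u,u)-\mathcal Q_{\mu,D}(w,u)$ over $U_j$, using only that $\mathcal Q_{\mu,D}(u,u)\le(\xi_j-\mu)\|\gamma_0(u)\|^2_{L^2(\partial\Omega)}$ on $U_j$; your explicit diagonalization of $\mathcal Q_{\mu,D}$ on $U_j$ and the resulting nondegenerate finite linear system is an equivalent, arguably cleaner, way to obtain the same decomposition. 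For the coercivity, the reference proof is a two-step argument: it first shows $\tilde\xi_{j+1}:=\inf\bigl\{\mathcal Q_\sigma(u,u)/\|\gamma_0(u)\|^2_{L^2(\partial\Omega)}:\,u\in U_j^{\perp}\cap\mathcal H^2_{0,N}(\Omega),\,u\ne0\bigr\}>\mu$ by contradiction, inserting the $(j+1)$-dimensional test space $U_j+\langle u\rangle$ into the min-max formula for $\xi_{j+1}$, and then proves an auxiliary inequality $\mathcal Q_\sigma(u,u)+M\|\gamma_1(u)\|^2_{L^2(\partial\Omega)}\ge a\|\gamma_0(u)\|^2_{L^2(\partial\Omega)}$ on $U_j^{\perp}$ for some $a\in(\mu,\tilde\xi_{j+1})$ by compactness, concluding with an $\varepsilon$-splitting of $\mathcal Q_\sigma$; you instead fold everything into a single contradiction with $b=n\to\infty$ and use the sharper bound $\mathcal Q_\sigma(u,u)\ge\xi_{j+1}\|\gamma_0(u)\|^2_{L^2(\partial\Omega)}$ on $U_j^{\perp}\cap\mathcal H^2_{0,N}(\Omega)$, after correctly identifying that space via $\mathcal Q_{\mu,D}(u,u_i)=(\xi_i-\mu)(\gamma_0(u),\gamma_0(u_i))_{\partial\Omega}$ with $\xi_i<\mu$. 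Two points deserve to be written out: first, the constant $\xi_{j+1}$ is not literally given by the min-max formula of Theorem \ref{thm_NBS} but by the complementary orthogonal-constraint characterization, which you should justify from the Hilbert-basis statements (for instance by testing $\mathcal Q_\sigma(u,\cdot)$ against the partial sums $\sum_{i=j+1}^{n}\xi_i^{-1/2}(\gamma_0(u),\hat u_i)_{\partial\Omega}\,u_i$ and applying Cauchy--Schwarz), or else settle for the weaker bound $\tilde\xi_{j+1}>\mu$ as in the reference argument, which suffices for your contradiction; second, to conclude $\gamma_1(u_\infty)=0$ you should note that $\mathcal Q_{\mu,D}(u_n,u_n)\ge-\mu\|\gamma_0(u_n)\|^2_{L^2(\partial\Omega)}$ is bounded below, so that $n\|\gamma_1(u_n)\|^2_{L^2(\partial\Omega)}$ stays bounded and $\gamma_1(u_n)\to0$ in $L^2(\partial\Omega)$. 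Neither point is a gap, and your spectral-gap estimate is in fact slightly stronger than the one established in the reference proof.
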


Note that the decomposition \eqref{split2} is not straightforward since $\mathcal Q_{\mu,D}$ does not define a scalar product for $\mu>0$. However, in order to prove Theorem \ref{thm_mu_xi} one can easily adapt the analogous proof in \cite[Theorem 5.1]{lamberti_stratis} but we omit the details.

We observe that, whenever $u\in U_j^{\perp}$, one can consider only test functions $\varphi\in U_j^{\perp}$ in the weak formulation of \eqref{weak_BSM}. Indeed, adding to $\varphi$ a test function $\tilde\varphi\in U_j$ leaves both sides of the equation unchanged. Hence one can perform the same analysis as in the case $\mu<0$ and state an analogous version of Theorem \ref{main-BSM} with the space $H^2(\Omega)$ replaced by $U_j^{\perp}$. We leave this to the reader.

\begin{rem}\label{C2}
If $\mu=\xi_j$ for some $j\in\mathbb N$ the situation is more involved and is not analyzed here. 
However, if we assume that $\xi_j$ is an eigenvalue of \eqref{weak_NBS} of multiplicity $m$ such that
\begin{equation}\label{quot_p}
\mathcal Q_{\sigma}(u,\varphi)=\xi_j(\gamma_0(u),\gamma_0(\varphi))_{\partial\Omega}\,,\ \ \ \forall\varphi\in H^2(\Omega), u\in U_{\xi_j},
\end{equation}
where $U_{\xi_j}$ is the eigenspace generated by all the eigenfunctions $\{u_j^1,...,u_j^m\}$ in $\mathcal H^2_{0,N}(\Omega)$ associated with $\xi_j$, the problem becomes simpler. Indeed, any function $w\in H^2(\Omega)$ can be written in the form $w=u+v$, where $u\in U_{\xi_j}$ and $v\in H^2_{\xi_j}(\Omega)$, where
$$
H^2_{\xi_j}(\Omega):=\left\{v\in H^2(\Omega): (\gamma_0(v),\gamma_0(u_j^i))_{\partial\Omega}=0{\rm\ for\ all\ }i=1,...,m\right\}.
$$ 
Hence, whenever $u\in H^2_{\xi_j}(\Omega)$, one can consider only test functions in $\varphi\in H^2_{\xi_j}(\Omega)$ in the weak formulation \eqref{weak_BSM} with $\mu=\xi_j$. In fact, adding to $\varphi$ a function $\tilde\varphi\in U_{\xi_j}$ leaves both sides of the equation unchanged. Thus we can perform the same analysis of Theorem \ref{thm_mu_xi} with $H^2(\Omega)$ replaced by $H^2_{\xi_j}(\Omega)$.

Note that equation \eqref{quot_p} is satisfied with $\xi_1=0$ and $u$ a constant function.
\end{rem}

We also have an analogous result for problem \eqref{BSL} with $\lambda>\eta_1$. Assume that $\lambda\in\mathbb R$ is such that $\eta_j<\lambda<\eta_{j+1}$ for some $j\in\mathbb N$. Recall that $\eta_j$ denote the eigenvalues of problem \eqref{DBS}. We denote by $V_j$ the subspace of $H^2(\Omega)$ generated by all eigenfunctions $v_i$ associated with the eigenvalues $\eta_i$  with $i\leq j$ and we set
$$
V_j^{\perp}=\left\{v\in H^2(\Omega):\mathcal Q_{\lambda,N}(v,\varphi)=0\,,\ \ \ \forall\varphi\in V_j\right\}.
$$
The space $V_j^{\perp}$ is a closed subspace of $H^2(\Omega)$. We have the following result.

\begin{thm}\label{thm_lambda_eta}
Let $\Omega$ be a bounded domain in $\mathbb R^N$ of class $C^{0,1}$ and assume that $\eta_j<\lambda<\eta_{j+1}$. Then
\begin{equation}\label{split3}
H^2(\Omega)=V_j\oplus V_j^{\perp}.
\end{equation}
Moreover, there exists $b\geq 0$ such that the quadratic form $\mathcal Q_{\lambda,N}(u,v)+b(\gamma_0(u),\gamma_0(v))_{\partial\Omega}$, $u,v\in H^2(\Omega)$, is coercive on $V_j^{\perp}$. 
\end{thm}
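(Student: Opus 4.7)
Proof plan. The argument parallels that of Theorem \ref{thm_mu_xi} (the analogue for \eqref{BSM}), with the roles of $\gamma_0$, $\mathcal H^2_{0,N}(\Omega)$, $\xi_j$, $U_j$ and $\{u_i\}$ replaced by $\gamma_1$, $\mathcal H^2_{0,D}(\Omega)$, $\eta_j$, $V_j$ and the eigenfunctions $\{v_i\}$ of \eqref{weak_DBS}. First I would prove the decomposition \eqref{split3}. Given $w\in H^2(\Omega)$, one has $w-v\in V_j^{\perp}$ iff $\mathcal Q_{\lambda,N}(v,v_k)=\mathcal Q_{\lambda,N}(w,v_k)$ for all $k=1,\dots,j$. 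Writing $v=\sum_{i=1}^{j}a_iv_i$, this reduces to a linear system with matrix $(\mathcal Q_{\lambda,N}(v_i,v_k))_{i,k=1}^{j}$. Using the weak formulation of \eqref{weak_DBS}, one checks that $\mathcal Q_{\sigma}(v_i,v_k)=\eta_i(\gamma_1(v_i),\gamma_1(v_k))_{\partial\Omega}$ and, by the standard orthogonalization inside each eigenspace, the eigenfunctions can be taken so that $(\gamma_1(v_i),\gamma_1(v_k))_{\partial\Omega}=\delta_{ik}/\eta_i$. Hence this matrix is diagonal with entries $(\eta_i-\lambda)/\eta_i$, which are nonzero because $\eta_i\le\eta_j<\lambda$. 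This gives existence and uniqueness of the decomposition; the same computation shows $V_j\cap V_j^{\perp}=\{0\}$, since for $v=\sum a_iv_i\in V_j\cap V_j^{\perp}$ one has $\mathcal Q_{\lambda,N}(v,v)=\sum a_i^2(\eta_i-\lambda)/\eta_i=0$, forcing all $a_i=0$.

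Next I would establish the key auxiliary claim, which I expect to be the main obstacle:
\begin{equation*}
\tilde\eta_{j+1}:=\inf_{\substack{v\in V_j^{\perp}\cap\mathcal H^2_{0,D}(\Omega)\\ \gamma_1(v)\ne 0}}\frac{\mathcal Q_\sigma(v,v)}{\|\gamma_1(v)\|_{L^2(\partial\Omega)}^2}>\lambda.
\end{equation*}
Arguing by contradiction, suppose $\tilde\eta_{j+1}\le\lambda$. A minimizer $v\in V_j^{\perp}\cap\mathcal H^2_{0,D}(\Omega)$ would satisfy $\mathcal Q_\sigma(v,v)\le\lambda\|\gamma_1(v)\|^2_{L^2(\partial\Omega)}$. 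Since $v\in V_j^{\perp}\cap\mathcal H^2_{0,D}(\Omega)$, the definition of $V_j^{\perp}$ combined with the weak formulation of \eqref{weak_DBS} gives $(\eta_i-\lambda)(\gamma_1(v),\gamma_1(v_i))_{\partial\Omega}=0$ for $i\le j$, hence $(\gamma_1(v),\gamma_1(v_i))_{\partial\Omega}=0$. It follows that $\mathcal Q_\sigma(u,v)=0$ for every $u\in V_j$. Then on the $(j+1)$-dimensional subspace $W_{j+1}:=V_j+\langle v\rangle\subset\mathcal H^2_{0,D}(\Omega)$, every $w=u+\alpha v$ satisfies
\begin{equation*}
\mathcal Q_\sigma(w,w)=\mathcal Q_\sigma(u,u)+\alpha^2\mathcal Q_\sigma(v,v)\le\eta_j\|\gamma_1(u)\|^2+\alpha^2\lambda\|\gamma_1(v)\|^2\le\lambda\|\gamma_1(w)\|^2,
\end{equation*}
contradicting the min-max characterization \eqref{minmax_DBS} of $\eta_{j+1}>\lambda$. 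This is the delicate step because it links the variational structure of $V_j^{\perp}$ inside $H^2(\Omega)$ to the intrinsic Dirichlet spectrum on $\mathcal H^2_{0,D}(\Omega)$.

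Finally I would derive the coercivity. Fix $a\in(\lambda,\tilde\eta_{j+1})$. I claim there exists $M>0$ such that
\begin{equation*}
\mathcal Q_\sigma(u,u)+M\|\gamma_0(u)\|_{L^2(\partial\Omega)}^2\ge a\|\gamma_1(u)\|_{L^2(\partial\Omega)}^2\qquad\forall u\in V_j^{\perp}.
\end{equation*}
If this failed, I would extract a sequence $\{u_n\}\subset V_j^{\perp}$ with $\|\gamma_1(u_n)\|_{L^2(\partial\Omega)}=1$ and $\mathcal Q_\sigma(u_n,u_n)+n\|\gamma_0(u_n)\|^2\le a$. The coercivity of $\mathcal Q_\sigma$ on $H^2(\Omega)$ yields a weak subsequential limit $\bar u\in V_j^{\perp}$ (closedness) with $\gamma_0(\bar u)=0$, $\|\gamma_1(\bar u)\|=1$, and by weak lower semi-continuity $\mathcal Q_\sigma(\bar u,\bar u)\le a<\tilde\eta_{j+1}$, contradicting the definition of $\tilde\eta_{j+1}$. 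Choose then $\varepsilon\in(0,1)$ with $(1-\varepsilon)a>\lambda$ and any $b\ge(1-\varepsilon)M$. For $u\in V_j^{\perp}$,
\begin{equation*}
\mathcal Q_{\lambda,N}(u,u)+b\|\gamma_0(u)\|^2\ge\varepsilon\mathcal Q_\sigma(u,u)+\bigl((1-\varepsilon)a-\lambda\bigr)\|\gamma_1(u)\|^2+\bigl(b-(1-\varepsilon)M\bigr)\|\gamma_0(u)\|^2,
\end{equation*}
and the right-hand side bounds $\varepsilon\mathcal Q_\sigma(u,u)$ from below, giving coercivity on $V_j^{\perp}$ by the equivalence of $(\mathcal Q_\sigma(\cdot,\cdot)+\|\cdot\|_{L^2}^2)^{1/2}$ with the standard $H^2$ norm (together with $\|\gamma_0\|_{L^2}$ to control the $L^2$ mass).
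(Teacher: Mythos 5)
Your proposal is correct, and for the two substantive steps it follows exactly the strategy the paper intends (the proof is omitted and, as for the analogous Theorem \ref{thm_mu_xi}, is an adaptation of \cite[Theorem 5.1]{lamberti_stratis}): first the auxiliary claim that the Rayleigh quotient of $\mathcal Q_\sigma$ over $V_j^{\perp}\cap\mathcal H^2_{0,D}(\Omega)$ is strictly larger than $\lambda$, proved by contradiction through the $(j+1)$-dimensional test space $V_j+\langle v\rangle$ and the min-max formula \eqref{minmax_DBS}, and then the inequality $\mathcal Q_\sigma(u,u)+M\|\gamma_0(u)\|^2_{L^2(\partial\Omega)}\geq a\|\gamma_1(u)\|^2_{L^2(\partial\Omega)}$ on $V_j^{\perp}$ obtained by a compactness argument, followed by the $\varepsilon$-splitting. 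Where you genuinely deviate is the decomposition \eqref{split3}: the paper's argument produces the $V_j$-component variationally (as a critical point of $v\mapsto\tfrac12\mathcal Q_{\lambda,N}(v,v)-\mathcal Q_{\lambda,N}(w,v)$ on $V_j$, using that $\mathcal Q_{\lambda,N}$ is negative definite there), whereas you solve the finite linear system directly after simultaneously diagonalizing $\mathcal Q_\sigma$ and $(\gamma_1\cdot,\gamma_1\cdot)_{\partial\Omega}$ on $V_j$; this is legitimate (within an eigenspace the two forms are proportional, so the boundary re-orthogonalization preserves $\mathcal Q_\sigma$-orthonormality) and arguably more transparent, since it exhibits the Gram matrix as ${\rm diag}\left((\eta_i-\lambda)/\eta_i\right)$ and gives $V_j\cap V_j^{\perp}=\{0\}$ at a glance. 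One small caveat in your last step: with $b=(1-\varepsilon)M$ exactly, your final lower bound reduces to $\varepsilon\mathcal Q_\sigma(u,u)+\left((1-\varepsilon)a-\lambda\right)\|\gamma_1(u)\|^2_{L^2(\partial\Omega)}$, which does not control $\|\gamma_0(u)\|_{L^2(\partial\Omega)}$ (constants belong to $V_j^{\perp}$ and make both terms vanish), so unlike the \eqref{BSM} case you cannot invoke the equivalence $\|u\|_{H^2(\Omega)}\simeq\left(\|D^2u\|^2_{L^2(\Omega)}+\|\gamma_0(u)\|^2_{L^2(\partial\Omega)}\right)^{1/2}$ directly; taking $b>(1-\varepsilon)M$ strictly (or splitting off a positive multiple of $\|\gamma_0(u)\|^2$) repairs this immediately and still yields the theorem, which only asks for some $b\geq 0$.
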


We observe that, whenever $v\in V_j^{\perp}$, one can consider only test functions $\varphi\in V_j^{\perp}$ in the weak formulation of \eqref{weak_BSL}. In fact, adding to $\varphi$ a test function $\tilde\varphi\in V_j$ leaves both sides of the equation unchanged. Hence one can perform the same analysis as in the case $\lambda<\eta_1$ and state an analogous version of Theorem \ref{main-BSL} with the space $H^2(\Omega)$ replaced by $V_j^{\perp}$. We leave this to the reader.

\begin{rem}
As in Remark \ref{C2}, one can treat the case $\lambda=\eta_j$ for some $j$ in the special situation when $\eta_j$ is an eigenvalue of \eqref{weak_DBS} of multiplicity $m$ such that
$$
\mathcal Q_{\sigma}(v,\varphi)=\eta_j(\gamma_1(v),\gamma_1(\varphi))\,,\ \ \ \forall\varphi\in H^2(\Omega), v\in V_{\eta_j},
$$
where $V_{\eta_j}$ is the eigenspace generated by all the eigenfunctions $\{v_j^1,...,v_j^m\}$ in $\mathcal H^2_{0,D}(\Omega)$ associated with $\eta_j$. This happens in the case of the unit ball  with $\eta_1=1$. We observe that any function $w\in H^2(\Omega)$ can be written in the form $w=v+u$, where $v\in V_{\eta_j}$ and $u\in H^2_{\eta_j}(\Omega)$, where
$$
H^2_{\eta_j}(\Omega):=\left\{u\in H^2(\Omega): (\gamma_1(u),\gamma_1(v_j^i))_{\partial\Omega}=0{\rm\ for\ all\ }i=1,...,m\right\}.
$$ 
Hence, whenever $v\in H^2_{\eta_j}(\Omega)$, one can consider in the weak formulation \eqref{weak_BSL} with $\lambda=\eta_j$ only test functions in $\varphi\in H^2_{\eta_j}(\Omega)$. In fact, adding to $\varphi$ a function $\tilde\varphi\in V_{\eta_j}$ leaves both sides of the equation unchanged. Thus we can perform the same analysis of Theorem \ref{thm_lambda_eta} with $H^2(\Omega)$ replaced by $H^2_{\eta_j}(\Omega)$.
\end{rem}

We conclude this section with a few more remarks. We have observed that the eigenvalue $\mu=0$ is always an eigenvalue of \eqref{weak_BSL} when $\lambda<\eta_1$, and  that the constant functions belong to the eigenspace associated with $\mu=0$, and in particular belong to the space $\mathcal H^2_{0,N}(\Omega)$ and are eigenfunctions associated with the first eigenvalue $\xi_1=0$ of problem \eqref{NBS}. Such a situation may occur also for other eigenvalues, as well as for problem \eqref{weak_BSM} (as we have seen in the case of the ball in $\mathbb R^N$ with the eigenvalue $\lambda(\mu)=\eta_1=1$ for all $\mu\in\mathbb R$). The following lemma clarifies this phenomenon.

\begin{lem}\label{degenerate-M}
Let $j\in\mathbb N$. Then one of the following two alternatives occur for problem \eqref{weak_BSM}:
\begin{enumerate}[i)]
\item $\lambda_j(\mu)<\eta_j$ for all $\mu\in(-\infty,0)$;
\item there exists $\mu_0\in(-\infty,0)$ such that $\lambda_j(\mu_0)=\eta_j$. In this case, $\eta_j=\lambda_j(\mu_0)$ for all $\mu\in(-\infty,\mu_0]$ and $\eta_j$ is an eigenvalue of problem \eqref{weak_BSM} for any  $\mu\in\mathbb R$.
\end{enumerate}
\end{lem}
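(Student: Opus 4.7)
The plan rests on two observations: (a) on the span $F:=\operatorname{span}(v_1,\ldots,v_j)\subset \mathcal H^2_{0,D}(\Omega)$ of the first $j$ eigenfunctions of \eqref{weak_DBS}, the form $\mathcal Q_{\mu,D}$ agrees with $\mathcal Q_\sigma$ independently of $\mu$, because $\gamma_0(v_k)=0$; and (b) the Hilbert basis $\{\hat v_j=\sqrt{\eta_j}\gamma_1(v_j)\}$ of $L^2(\partial\Omega)$ from Theorem \ref{thm_DBS} makes the trace space $\gamma_1(\mathcal H^2_{0,D}(\Omega))$ dense in $L^2(\partial\Omega)$. From (a) and \eqref{minmax_DBS}, the Rayleigh quotient in \eqref{minmaxM} is bounded above by $\eta_j$ on $F$, yielding the uniform bound $\lambda_j(\mu)\leq \eta_j$ for all $\mu\in(-\infty,0)$; combined with the already-noted monotonicity of $\mu\mapsto \lambda_j(\mu)$, this gives the dichotomy (i) vs.\ (ii) and, in case (ii), the equality $\lambda_j(\mu)=\eta_j$ on $(-\infty,\mu_0]$.

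The substantive content is to show that, under the hypothesis of (ii), $\eta_j$ is an eigenvalue of \eqref{weak_BSM} for every $\mu\in\mathbb R$. The strategy is to construct an eigenfunction $v$ with $\gamma_0(v)=0$: the $\mu$-dependent term in $\mathcal Q_{\mu,D}$ then vanishes on $v$ against every test function, so once $v$ satisfies the eigenvalue equation for one $\mu$ it satisfies it for all. To produce $v$, I will apply the $\mathcal Q_{\mu_0,D}$-orthogonal projection $P:H^2(\Omega)\to H^2_{\mu_0,D}(\Omega)$ along $\mathcal H^2_{0,N}(\Omega)$ to $F$. Its restriction $P|_F$ has kernel $F\cap \mathcal H^2_{0,N}\subset \mathcal B_D(\Omega)\cap H^2_0(\Omega)=\{0\}$ by uniqueness for the biharmonic Dirichlet problem, so $P(F)$ is $j$-dimensional in $H^2_{\mu_0,D}$. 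Decomposing $v\in F$ as $v=P(v)+v_0$ with $v_0\in \mathcal H^2_{0,N}$ gives $\gamma_1(v_0)=0$ and, by $\mathcal Q_{\mu_0,D}$-orthogonality, $R_{\mu_0}(P(v))\leq R_{\mu_0}(v)\leq \eta_j$, where $R_{\mu_0}$ denotes the Rayleigh quotient in \eqref{minmaxM}. The min-max then forces $\max_{P(F)}R_{\mu_0}=\eta_j$; at any maximizer $P(v)\neq 0$ the chain $\eta_j=R_{\mu_0}(P(v))\leq R_{\mu_0}(v)\leq \eta_j$ collapses to equalities, forcing $\mathcal Q_{\mu_0,D}(v_0,v_0)=0$ and hence $v_0=0$ by coercivity, so $v=P(v)\in F\cap H^2_{\mu_0,D}$. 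Expanding $v=\sum_{k=1}^j\alpha_k v_k$ together with the standard orthogonality relations of the $v_k$, the condition $R_{\mu_0}(v)=\eta_j$ forces $\alpha_k=0$ whenever $\eta_k<\eta_j$, so $v$ is an eigenfunction of \eqref{weak_DBS} with eigenvalue $\eta_j$.

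It remains to promote $v$ to an eigenfunction of \eqref{weak_BSM}. The continuous functional $L(\varphi):=\mathcal Q_\sigma(v,\varphi)-\eta_j(\gamma_1(v),\gamma_1(\varphi))_{\partial\Omega}$ on $H^2(\Omega)$ vanishes on $\mathcal H^2_{0,D}(\Omega)$ by the \eqref{weak_DBS}-eigenvalue equation for $v$, and on $\mathcal H^2_{0,N}(\Omega)$ because $\gamma_1=0$ there while $\gamma_0(v)=0$ and $v\in H^2_{\mu_0,D}$ together give $\mathcal Q_\sigma(v,\varphi)=\mathcal Q_{\mu_0,D}(v,\varphi)=0$. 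The crucial step---and the main obstacle---is to show that $\mathcal H^2_{0,D}(\Omega)+\mathcal H^2_{0,N}(\Omega)$ is dense in $(H^2(\Omega),\mathcal Q_{\mu_0,D})$: any $u$ in its $\mathcal Q_{\mu_0,D}$-orthogonal complement lies in $H^2_{\mu_0,D}$ and admits the expansion $u=\sum_k c_k v_{k,\mu_0}$; orthogonality to $\mathcal H^2_{0,D}$ translates, via the \eqref{weak_BSM}-eigenvalue equation for each $v_{k,\mu_0}$, into $\sum_k c_k\sqrt{\lambda_k(\mu_0)}\,\hat v_{k,\mu_0}$ being $L^2(\partial\Omega)$-orthogonal to $\gamma_1(\mathcal H^2_{0,D})$; by (b) this is orthogonality against a dense subspace, so the sum vanishes in $L^2(\partial\Omega)$, and the orthonormality of $\{\hat v_{k,\mu_0}\}$ from Theorem \ref{main-BSM} yields $c_k=0$ for every $k$. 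By continuity of $L$, $L\equiv 0$ on $H^2(\Omega)$; since $\gamma_0(v)=0$, the $\mu$-term of $\mathcal Q_{\mu,D}$ acts trivially on $v$ against every test function, so this rewrites as $\mathcal Q_{\mu,D}(v,\varphi)=\eta_j(\gamma_1(v),\gamma_1(\varphi))_{\partial\Omega}$ for every $\mu\in\mathbb R$ and every $\varphi\in H^2(\Omega)$, showing that $\eta_j$ is an eigenvalue of \eqref{weak_BSM} for all $\mu$.
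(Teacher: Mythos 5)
Your opening two steps are sound: the bound $\lambda_j(\mu)\leq\eta_j$ obtained by testing \eqref{minmaxM} on the span $F$ of the first $j$ eigenfunctions of \eqref{weak_DBS}, the dichotomy via monotonicity in $\mu$, and the projection/maximizer argument producing a nonzero $v\in F\cap H^2_{\mu_0,D}(\Omega)$ which is a \eqref{weak_DBS}-eigenfunction with eigenvalue $\eta_j$ and $\gamma_0(v)=0$ are all correct (and more detailed than the paper, which at this point simply selects a common eigenfunction on the grounds that equality is realized in \eqref{minmaxM}). The genuine gap is the step you yourself call crucial: the density of $\mathcal H^2_{0,D}(\Omega)+\mathcal H^2_{0,N}(\Omega)$ in $H^2(\Omega)$. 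First, your proof of it does not work. From $(c_k)\in l^2$ you only know $\gamma_1(u)=\sum_k c_k\,\lambda_k(\mu_0)^{-1/2}\,\hat v_{k,\mu_0}\in L^2(\partial\Omega)$; the series $\sum_k c_k\sqrt{\lambda_k(\mu_0)}\,\hat v_{k,\mu_0}$ has coefficients $c_k\sqrt{\lambda_k(\mu_0)}$, which need not be square-summable since $\lambda_k(\mu_0)\to+\infty$, so it is not an element of $L^2(\partial\Omega)$ and the functional $g\mapsto\sum_k c_k\sqrt{\lambda_k(\mu_0)}(\hat v_{k,\mu_0},g)_{\partial\Omega}$ is not $L^2$-bounded. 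An unbounded functional may vanish on the dense subspace $\gamma_1(\mathcal H^2_{0,D}(\Omega))=\mathscr S^{\frac{1}{2}}(\partial\Omega)$ without its coefficients vanishing, and you cannot test with $g=\hat v_{m,\mu_0}$ to isolate $c_m$, because $\hat v_{m,\mu_0}\in\mathcal S^{\frac{1}{2}}(\partial\Omega)$ need not belong to $\mathscr S^{\frac{1}{2}}(\partial\Omega)$ when $\Omega$ is merely Lipschitz; the possible strictness of this inclusion is precisely the phenomenon behind Counterexample \ref{cex}. Second, the density statement itself is nowhere established: the paper only proves $\mathcal H^2_{0,D}(\Omega)+\mathcal H^2_{0,N}(\Omega)=H^2(\Omega)$ for $C^{2,1}$ domains and shows the sum is a proper subspace for the square, and its density in $H^2(\Omega)$ for a general Lipschitz domain is a substantive claim in its own right. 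Your argument genuinely needs it, since your $v$ is only known to satisfy the eigenvalue identity against test functions in that sum.

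The paper's proof never meets this obstacle: it chooses a function that is simultaneously an eigenfunction of \eqref{weak_BSM} at $\mu=\mu_0$ (so the weak equation already holds against every $\varphi\in H^2(\Omega)$) and an element of $\mathcal H^2_{0,D}(\Omega)$; since its trace vanishes, $\mathcal Q_{\mu,D}(v,\varphi)=\mathcal Q_{\sigma}(v,\varphi)$ for every $\mu$ and every $\varphi$, and the full equation transfers verbatim to all $\mu\in\mathbb R$. Your construction can be repaired in the same spirit, and without any density claim, by exploiting the constancy of $\lambda_j$ on $(-\infty,\mu_0]$ with two parameter values: fix $\mu<\mu'\leq\mu_0$ and let $V$ be the span of the first $j$ eigenfunctions of the $\mu$-problem, so that the Rayleigh quotient of the $\mu$-problem is at most $\eta_j$ on $V$. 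A maximizer $v^*$ of the $\mu'$-Rayleigh quotient over $V$ satisfies $\eta_j=R_{\mu'}(v^*)\leq R_{\mu}(v^*)\leq\eta_j$, which forces $\gamma_0(v^*)=0$ and $R_\mu(v^*)=\lambda_j(\mu)$; within the span of genuine eigenfunctions of the $\mu$-problem, attaining the value $\lambda_j(\mu)$ forces $v^*$ to be an eigenfunction of \eqref{weak_BSM} at $\mu$ with eigenvalue $\eta_j$, and since $\gamma_0(v^*)=0$ it is an eigenfunction for every $\mu\in\mathbb R$. This keeps your projection/maximizer mechanism but replaces the dubious density step with information valid against all of $H^2(\Omega)$ from the outset.
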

\begin{proof}
From the Min-Max Principles \eqref{minmax_DBS} and \eqref{minmaxM} we have that $\lambda_j(\mu)\leq\eta_j$ for all $\mu\in(-\infty,0)$, $j\in\mathbb N$. Assume now that there exists $\mu_0\in(-\infty,0)$ such that $\lambda_j(\mu_0)=\eta_j$. Again from \eqref{minmax_DBS} and \eqref{minmaxM} we deduce that we can choose an eigenfunction $v_{j,\mu_0}$ associated with $\lambda_j(\mu_0)$ which belong to $\mathcal{H}^2_{0,D}(\Omega)$ and which coincide with an eigenfunction $u_j$ of problem \eqref{weak_DBS} associated with $\eta_j$. In fact the eigenfunctions associated with $\lambda_j(\mu)$ are exactly the functions realizing the equality in \eqref{minmaxM}.
We also note that
$$
\mathcal Q_{\sigma}(u_j,\varphi)=\mathcal Q_{\mu,D}(u_j,\varphi)\,,\ \ \ \forall\varphi\in H^2(\Omega),\mu\in\mathbb R,
$$
hence 
$$
\mathcal Q_{\mu,D}(u_j,\varphi)=\eta_j(u_j,\varphi)_{\partial\Omega}\,,\ \ \ \forall\varphi\in H^2(\Omega),\mu\in\mathbb R,
$$
hence $\eta_j$ is an eigenvalue of \eqref{weak_BSM} for all $\mu\in\mathbb R$ and in particular $\eta_j=\lambda_j(\mu)$ for all $\mu\in(-\infty,\mu_0]$. This concludes the proof.
\end{proof}

In the same way one can prove the following.

\begin{lem}\label{degenerate-L}
Let $j\in\mathbb N$. Then one of the following two alternatives occur for problem \eqref{weak_BSL}:
\begin{enumerate}[i)]
\item $\mu_j(\lambda)<\xi_j$ for all $\lambda\in(-\infty,\eta_1)$;
\item there exists $\lambda_0\in(-\infty,\eta_1)$ such that $\mu_j(\lambda_0)=\xi_j$. In this case, $\xi_j=\mu_j(\lambda_0)$ for all $\lambda\in(-\infty,\lambda_0]$ and $\xi_j$ is an eigenvalue of problem \eqref{weak_BSL}, for any  $\lambda\in\mathbb R$.
\end{enumerate}
\end{lem}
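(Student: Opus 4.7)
The plan is to mirror the proof of Lemma~\ref{degenerate-M} line by line, with the roles of $\mathcal H^2_{0,D}(\Omega)$ and $\mathcal H^2_{0,N}(\Omega)$, of the traces $\gamma_0$ and $\gamma_1$, and of problems \eqref{weak_DBS} and \eqref{weak_NBS} systematically exchanged.

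First I would establish the universal upper bound $\mu_j(\lambda)\leq\xi_j$ valid for every $j\in\mathbb N$ and every $\lambda\in(-\infty,\eta_1)$. This is obtained by comparing the min-max formula \eqref{minmaxL} with the min-max characterization of $\xi_j$ in Theorem~\ref{thm_NBS}. The key observation is that any $u\in\mathcal H^2_{0,N}(\Omega)$ satisfies $\gamma_1(u)=0$, so $\mathcal Q_{\lambda,N}(u,u)=\mathcal Q_\sigma(u,u)$; restricting the outer minimization in \eqref{minmaxL} to $j$-dimensional subspaces contained in $\mathcal H^2_{0,N}(\Omega)$ therefore reproduces exactly the quantity defining $\xi_j$, and the stated inequality follows.

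Next I would suppose there exists $\lambda_0\in(-\infty,\eta_1)$ with $\mu_j(\lambda_0)=\xi_j$. As in Lemma~\ref{degenerate-M}, the equality in the two min-max principles forces an optimal $j$-dimensional subspace that lies inside $\mathcal H^2_{0,N}(\Omega)$, so one can choose an eigenfunction $u_{j,\lambda_0}=:u_j\in\mathcal H^2_{0,N}(\Omega)$ of \eqref{weak_BSL} at $\lambda=\lambda_0$ which is simultaneously an eigenfunction of \eqref{weak_NBS} associated with $\xi_j$. Since $\gamma_1(u_j)=0$, for every $\varphi\in H^2(\Omega)$ and every $\lambda\in\mathbb R$ one has
\[
\mathcal Q_{\lambda,N}(u_j,\varphi)=\mathcal Q_\sigma(u_j,\varphi)=\xi_j\,(\gamma_0(u_j),\gamma_0(\varphi))_{\partial\Omega},
\]
so $\xi_j$ is an eigenvalue of \eqref{weak_BSL} for every $\lambda\in\mathbb R$. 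Combined with the monotonicity of $\lambda\mapsto\mu_j(\lambda)$ noted after Theorem~\ref{thm-limit-xi} and with the bound $\mu_j(\lambda)\leq\xi_j$, this forces $\mu_j(\lambda)=\xi_j$ for every $\lambda\in(-\infty,\lambda_0]$.

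The main delicate point will be the passage from the weak equation satisfied by $u_j$ against test functions in $\mathcal H^2_{0,N}(\Omega)$ (as an eigenfunction of \eqref{weak_NBS}) to the identity displayed above, which must hold for every $\varphi\in H^2(\Omega)$; this is the exact counterpart of the corresponding step in Lemma~\ref{degenerate-M} and is to be handled by the same argument.
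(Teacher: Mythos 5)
Your proposal is correct and takes exactly the paper's route: the paper proves Lemma \ref{degenerate-L} simply by noting it can be proved ``in the same way'' as Lemma \ref{degenerate-M}, i.e.\ the mirrored argument you outline (the bound $\mu_j(\lambda)\leq\xi_j$ from comparing \eqref{minmaxL} with the min-max formula of Theorem \ref{thm_NBS}, the equality case yielding a common eigenfunction $u_j\in\mathcal H^2_{0,N}(\Omega)$ of \eqref{weak_BSL} at $\lambda_0$ and of \eqref{weak_NBS}, and monotonicity of $\lambda\mapsto\mu_j(\lambda)$ on $(-\infty,\lambda_0]$). The ``delicate point'' you flag is in fact already settled by what you established: since $u_j$ is an eigenfunction of \eqref{weak_BSL} at $\lambda_0$, the identity $\mathcal Q_{\lambda_0,N}(u_j,\varphi)=\xi_j(\gamma_0(u_j),\gamma_0(\varphi))_{\partial\Omega}$ holds for all $\varphi\in H^2(\Omega)$ because the test space of \eqref{weak_BSL} is all of $H^2(\Omega)$, and $\gamma_1(u_j)=0$ gives $\mathcal Q_{\lambda,N}(u_j,\varphi)=\mathcal Q_{\lambda_0,N}(u_j,\varphi)$ for every $\lambda\in\mathbb R$, so no extension of the \eqref{weak_NBS} equation beyond test functions in $\mathcal H^2_{0,N}(\Omega)$ is needed; this is precisely how the proof of Lemma \ref{degenerate-M} concludes.
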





\def\cprime{$'$} \def\cprime{$'$} \def\cprime{$'$} \def\cprime{$'$}
  \def\cprime{$'$}


\end{document}